\documentclass[pdftex,12pt]{amsart}

\usepackage{mathtools,amssymb,braket,booktabs}
\usepackage{hyperref}
\usepackage[capitalize]{cleveref}
\usepackage{autonum}

\newtheorem{theorem}{Theorem}[section]
\newtheorem{corollary}[theorem]{Corollary}
\newtheorem{proposition}[theorem]{Proposition}
\newtheorem{lemma}[theorem]{Lemma}
\newtheorem{conjecture}[theorem]{Conjecture}
\theoremstyle{definition}
\newtheorem{definition}[theorem]{Definition}
\newtheorem{algorithm}[theorem]{Algorithm}
\newtheorem{example}[theorem]{Example}

\newcommand{\F}{\mathbb{F}}
\newcommand{\Z}{\mathbb{Z}}
\newcommand{\Q}{\mathbb{Q}}
\newcommand{\C}{\mathbb{C}}
\newcommand{\B}{\mathbb{B}}
\newcommand{\Cl}[1]{\operatorname{Cl}_{#1}}
\newcommand{\gind}[2]{(#1:#2)}
\newcommand{\at}[1]{\widetilde{a}_{#1}}
\newcommand{\vt}[2]{\widetilde{v}_{#1}^{(#2)}}
\newcommand{\Vt}[2]{\widetilde{V}_{#1}^{(#2)}}
\newcommand{\Et}[1]{\widetilde{E}''_{#1}}
\newcommand{\jac}[2]{\genfrac{(}{)}{}{}{#1}{#2}}
\DeclareMathOperator{\Gal}{Gal}
\DeclareMathOperator{\sgn}{sgn}
\allowdisplaybreaks

\begin{document}

\title{On $\mathbb{Z}_2$-extensions of real quadratic fields}
\author[S.~Sasaki]{Sosuke Sasaki}
\address{
  Research Institute for Science and Engineering,
  Waseda University,
  3-4-1 Okubo, Shinjuku-ku, Tokyo 169-8555, Japan
}
\email{sosuke\_s5@asagi.waseda.jp}
\subjclass[2020]{Primary 11R23; Secondary 11R27, 11R29, 11Y40}
\keywords{Real quadratic fields, cyclotomic units}
\begin{abstract}
  Let $k$ be a real quadratic field.
  Let $k_n$ be the $n$-th layer of
  the $\mathbb{Z}_2$-extension of $k$,
  and $2^{a_n}$ be the $2$-part of the class number of $k_n$.
  In this paper,
  we derive many laws
  concerning the sequence $(a_0,a_1,\dotsc)$,
  by iteratively extending the group of cyclotomic units.
\end{abstract}
\maketitle

\section{Introduction}

Let $k$ be a real quadratic field, and
consider the (cyclotomic) $\Z_2$-extension
\[
  k=k_0\subseteq k_1\subseteq k_2\subseteq\dots\subseteq k_\infty=\bigcup_{n=0}^\infty k_n.
\]
The field $k_n$ is called the $n$-th layer of this $\Z_2$-extension,
which is the unique intermediate field of $k_\infty/k$ with degree $2^n$ over $k$.
Let $A_n$ be the $2$-Sylow subgroup of the ideal class group of $k_n$,
and let $2^{a_n}$ be the order of $A_n$.
From Iwasawa's class number formula~\cite{iwasawa_bull_1959,iwasawa_ann_1973} and
the Ferrero--Washington theorem~\cite{ferrerowashington_ann_1979},
there exist a non-negative integer $\lambda_2(k)$ and an integer $\nu_2(k)$
such that for any sufficiently large integer $n$,
\[
  a_n=\lambda_2(k)n+\nu_2(k)
\]
holds.
Under Greenberg's conjecture~\cite{greenberg_ajm_1976} we have $\lambda_2(k)=0$,
i.e., $a_n$ is eventually constant.
However, even in the case of $\Z_2$-extensions of real quadratic fields,
Greenberg's conjecture remains unsolved.
In \cite{fukudakomatsukumakawasasaki_jxm_2026},
it is shown that $\lambda_2(\Q(\sqrt m))=0$ for all
$1<m<10^6$.

Our aim in the present paper is to understand
the behavior of $a_n$ not only for sufficiently large $n$
but also for small $n$.

What kinds of integer sequences could there be for $(a_0,a_1,\dotsc)$?
Of course, not just any sequence will do;
there are many restrictions.
Iwasawa's class number formula and the Ferrero--Washington theorem
are among such restrictions.
Here are other examples of well-known restrictions.
Let $k=\Q(\sqrt m)$, where $m>1$ is a square-free odd integer.
The class field theory yields that
the sequence $(a_0,a_1,\dotsc)$ is non-decreasing.
Further, Fukuda's theorem~\cite{fukuda_japan_1994}
states that if $a_n=a_{n+1}$, then $a_i=a_n$ for all $i\ge n$.
% Also, Kumakawa~\cite{kumakawa_private} showed:
Also, Kumakawa showed
(at an informal seminar; a paper is currently being prepared):
\begin{theorem}
  Let $k=\Q(\sqrt p)$, where $p$ is a prime with $p\equiv17\pmod{32}$.
  If $a_1=1$ and $a_2=3$, then $a_3=3$
  (and hence $a_i=3$ for all $i\ge2$ from Fukuda's theorem).
\end{theorem}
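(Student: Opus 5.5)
The plan is to combine genus theory with an index computation for cyclotomic units in the layers $k_n$. The target is the statement that $a_2=3$ forces $a_3=a_2$.

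Let $k=\Q(\sqrt p)$ with $p\equiv17\pmod{32}$. Then $2$ splits completely in $\Q(\sqrt p)$, and the two primes above $2$ remain inert in $k_1/k$ and $k_2/k$, since $p\equiv1\pmod{16}$. Only the primes above $2$ ramify in $k_\infty/k$. Ambiguous class number formulas then control the Galois cohomology of $A_n$. Write $G_n=\Gal(k_n/k)$. The number of $G_n$-invariant classes of $A_n$ is $2^{a_0}\cdot 2^{2n}/(E_0:E_0\cap N_{k_n/k}k_n^\times)$. Here two primes ramify totally, and $a_0=0$ because $p$ is prime.

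First I will translate the hypotheses into unit data. Let $\eta$ be the fundamental unit of $k$. Since $a_0=0$ and $a_1=1$, the norm symbols of $-1$ and $\eta$ at the two dyadic primes are pinned down. Then $a_2=3$ gives information on the size of $A_2^{G_2}$ and on the structure of $A_2$ as a $\Z_2[G_2]$-module, which is cyclic-ish since its invariants are small.

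The core step is to work in $k_n=\Q(\sqrt p)\cdot\Q(\zeta_{2^{n+2}})^+$. I will use the class number formula $h(k_n)=h(\Q_n)\,h(\Q(\sqrt p)\Q_n)/\dots$ in its analytic/unit-index form. Since $h(\Q_n)$ is odd, $2^{a_n}$ equals, up to explicit powers of $2$, the index of the relative cyclotomic units (built from $\eta$ and the cyclotomic units of $\Q(\sqrt p,\zeta_{2^{n+2}})^+$) in the full relative unit group.

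Following the abstract's strategy, I will iteratively enlarge the cyclotomic unit group by adjoining square roots of units that become squares in $k_n$. Each successful extraction lowers the index by $2$. Knowing $a_1$ and $a_2$ tells us exactly how many square roots can be extracted at levels $1$ and $2$. Then I will show that at level $3$ the new units produced by the norm-compatible system (from the relative cyclotomic units $\xi_n$ with $N_{k_{n+1}/k_n}\xi_{n+1}=\xi_n$) give precisely the increment $2^{a_3-a_2}$ in the index formula. The aim is to show this increment must be $1$. The mechanism is that, under the congruence $p\equiv17\pmod{32}$, the obstruction to extracting a further root of $\xi_3$ is governed by the same quadratic-residue symbol, for example $\jac{2}{p}_4$ or $\jac{p}{2}$-type octic symbols determined by $p\bmod 32$, that already made $a_2-a_1=2$.

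The main obstacle is the level-$3$ computation. I must show that when $a_2=3$, the element $\xi_3$ (or its partial square root already obtained) is a square times a unit from $k_2$ in the appropriate sense. I would attempt this with $2$-adic local computations at the dyadic primes of $k_3$, using $p\equiv17\pmod{32}$ to control the completions. I would combine these with the capitulation criterion: $a_3=a_2$ if and only if the norm map $A_3\to A_2$ is an isomorphism, if and only if $A_3^{G}$ has size matching $A_2$. After that, Fukuda's theorem gives $a_i=3$ for all $i\ge2$.
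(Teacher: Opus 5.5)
Your proposal has a genuine gap: the step that carries the content of the theorem, namely excluding any new square root at level~$3$, is never argued. You say the level-$3$ obstruction ``is governed by the same quadratic-residue symbol \dots\ that already made $a_2-a_1=2$'' and that you ``would attempt'' $2$-adic local computations. But no symbol, computation or contradiction is actually produced.

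This step cannot be taken on faith. Since $a_2-a_1=2=2^{1}$, \cref{cor:fkot} cannot be started at $n=2$. \cref{cor:p_ne_1} (with $p\not\equiv1\pmod{32}$) only gives $a_3-a_2\le3$, and Fukuda's theorem needs $a_3=a_2$ as input. In the unit-index formulation you adopt, $a_1$ and $a_2$ tell you how many roots were extracted. They do not tell you the signs of $w^{1+\gamma}$ and $w^{1+\tau}$ for the extracted roots $w$, and these signs enter the level-$3$ test. So you should expect a case analysis, not a single residue symbol.

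Your stated target is also off. $a_3=a_2$ means $V_3\cap E_3^2=V_3^2$ for $V_3=E''_2\times C_3\times H_3$. That is a statement about all of $V_3$, not that $\xi_3$ is a square times a unit of $k_2$.

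There are also smaller slips:
\begin{itemize}
\item The dyadic primes of $k$ are totally ramified, not inert, in every $k_n/k$, as you yourself say a few lines later.
\item Chevalley's formula needs the factor $[k_n:k]^{-1}$, so $\lvert A_n^{G_n}\rvert=2^n/\gind{E_0}{E_0\cap N_{k_n/k}k_n^\times}$.
\item In your capitulation criterion, only the implication $a_3=a_2\Rightarrow\lvert A_3^{\Gal(k_3/k_2)}\rvert=\lvert A_2\rvert$ is automatic.
\end{itemize}

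The paper does not write out a proof of this statement (it is quoted from Kumakawa), but its machinery supplies exactly the missing mechanism.
\begin{itemize}
\item Level $1$: since $p\equiv1\pmod{16}$ and $a_1=1$, \cref{prop:1st} gives $2^{(p-1)/4}\equiv1\pmod p$, $N_{k'/\Q}(\varepsilon')=1$ and $v_1^{(1)}=\lvert\eta_1\rvert$.
\item Level $2$: in the linear system of the proof of \cref{thm:a1a2_p} (with $s,t$ as there), taking $a_1=1$ and $(p-1)/16$ odd gives $x_3\equiv s+t+1\pmod2$. The closing argument of that proof uses only $x_3=1$ and \cref{lem:etc}, so it shows that $a_2\ge3$ forces $x_3=0$. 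Hence $\sgn\eta_1=-\sqrt{\lvert\eta_1\rvert}^{1+\tau}$ and $v_2^{(1)}=\pm c_1^{1-t}\eta_2^{1+\gamma}$. By \cref{cor:fund_1_eta}, $v_2^{(2)}=u\eta_2$ with $u\in E''_1\times C_2$.
\item Level $3$: \cref{cor:fund_1_eta} forces any candidate to be $u\,\eta_3^{1+\gamma+\gamma^2+\gamma^3}$ with $u\in E''_2\times C_3$. One must show that, for every admissible choice of the unknown signs, no such element satisfies the conditions of \cref{cor:main}.
\end{itemize}
The inputs for the level-$3$ check are $\eta_2^{1+\tau}=-1$, the formula for $\eta_3^{1+\tau}$ in \cref{prop:norm_tau} (which depends on $p\bmod 64$, with $r_p\in\{4,12\}$), and \cref{prop:sign_c}. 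These are packaged as the $\F_2$-linear conditions of \cref{thm:lin_main}. This finite case analysis is what \cref{algo:abst} carries out. \cref{tab:3,tab:4} are the relevant ones, since $N_{k'/\Q}(\varepsilon')=1$, and they show that $(0,1,3,3,3)$ is the only sequence with $a_1=1$ and $a_2=3$.

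Your genus-theoretic route might be made to work. It would, however, need an equally explicit replacement for this level-$3$ analysis, and the proposal contains none.
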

Similar to this theorem,
we wish to find many restrictions on the sequence $(a_0,a_1,\dotsc)$
under certain conditions.
In this paper, we give a general method for this purpose.
For example,
we obtained the following results
(see \cref{thm:a1a2_n,thm:a1a2_p}).
\begin{theorem}
  Let $k=\Q(\sqrt p)$, where $p$ is a prime with $p\equiv1\pmod{16}$.
  Assume that the norm of the fundamental unit of $\Q(\sqrt{2p})$ is $-1$.
  % Then we have $a_1\ge2$.
  % Moreover, if $a_1\ge3$ and $N_{\Q(\zeta_{8p})/k(\zeta_8)}(1-\zeta_{8p})<0$,
  If $a_1\ge3$ and $N_{\Q(\zeta_{8p})/k(\zeta_8)}(1-\zeta_{8p})<0$,
  then $a_2=a_1$ (and hence $a_i=a_1$ for all $i\ge1$ from Fukuda's theorem,
  which yields $\lambda_2(k)=0$).
\end{theorem}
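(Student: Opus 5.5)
The plan is to compare the $2$-parts of class numbers with indices of cyclotomic units, in the spirit of Sinnott's formula, and then enlarge the cyclotomic unit group step by step. Let $K_n=k_n$ and write $h(K)$ for class numbers. Since $p\equiv1\pmod{16}$, the prime $2$ splits in $k$, and each of the two primes above $2$ is totally ramified in $k_\infty/k$. For the real abelian field $k_n=\Q(\sqrt p,\zeta_{2^{n+2}}+\zeta_{2^{n+2}}^{-1})$, Sinnott's formula gives
\[
  (E_n:C_n)=2^{b}h(k_n)
\]
with an explicit $2$-power $2^b$, where $C_n$ is the group of cyclotomic units of $k_n$. The genus-theory constant here is delicate because the Galois group is not cyclic, and it must be computed explicitly. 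The key point of the proof is therefore to find enough units in $E_n$ that do not lie in $C_n$, in order to pin down $a_n$.

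For $n=1$, the field is $k_1=\Q(\sqrt p,\sqrt2)$. Its cyclotomic units are generated by three ingredients: the unit $1+\sqrt2$, the fundamental unit $\varepsilon_p$ (via $N(1-\zeta_p)$ type elements), and the element
\[
  \eta=N_{\Q(\zeta_{8p})/k(\zeta_8)}(1-\zeta_{8p})
\]
made real. The hypothesis that $N\varepsilon_{2p}=-1$ controls whether $\varepsilon_{2p}$ or its square appears. Computing $(E_1:C_1)$ relates $a_1$ to the $2$-divisibility of $\eta$ in $E_1$. First I would show that $a_1\ge3$ forces $\eta$, suitably normalized, to be a square in $E_1$; say $\eta=\pm\xi^2$.

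Next I pass to $k_2$. The group $C_2$ contains $N_{\Q(\zeta_{16p})/k_2}(1-\zeta_{16p})=\theta$ with $N_{k_2/k_1}\theta=\eta$. I extend $C_2$ iteratively, adjoining square roots of suitable elements of the form $\theta^{1+\sigma}\xi^{-1}$ and their Galois conjugates, where $\sigma$ generates $\Gal(k_2/k_1)$. This yields a larger group $\widetilde C_2\subseteq E_2$ with an index $(\widetilde C_2:C_2)$ that can be computed. I then compare this index with the Sinnott formula: $h(k_2)_2=2^{a_2}$ divides $(E_2:C_2)/2^b$, and
\[
  (E_2:\widetilde C_2)=(E_2:C_2)/(\widetilde C_2:C_2).
\]
The aim is to show that $(E_2:\widetilde C_2)$ is exactly as small as $2^{a_1}$ allows, so that $a_2\le a_1$. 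Combined with monotonicity, this gives $a_2=a_1$, and Fukuda's theorem then finishes the proof.

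The sign condition $\eta<0$ enters as follows. It says that $\xi^2=-\eta$ under one real embedding, so the square root must be taken of $-\eta$, and this determines the sign pattern of $\theta$. This pattern in turn prevents a further square root from existing in $E_2$, which is what caps the index. To check the units that could be squares, I would use local behavior at the primes above $2$, or signature and norm arguments, to show that no further element of $\widetilde C_2$ is a square in $E_2$.

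The main obstacle I expect is this last step: proving maximality of $\widetilde C_2$ in the $2$-adic sense, i.e.\ that $E_2/\widetilde C_2$ has no $2$-torsion beyond what the $a_1$ bound allows. I would attack it first via the Galois module structure: writing the relevant module over $\Z_2[\Gal(k_2/\Q)]$, decomposing it along characters, and checking each component through norms to $k_1$ and to the subfields $\Q(\sqrt{2p})$ and $\Q(\sqrt2)$, where the $N\varepsilon_{2p}=-1$ hypothesis is used.
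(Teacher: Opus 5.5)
Your overall framework matches the paper's: by \cref{thm:sinnott}, which also supplies the Sinnott constant you leave uncomputed, $V_2=E''_1\times C_2\times H_2$ already accounts for $2^{a_1}$. So $a_2=a_1$ means exactly that no element of $V_2\smallsetminus V_2^2$ becomes a square in $E_2$. But the proposal stops where the proof has to begin. You name this maximality as the main obstacle and give no way to decide whether a unit of $k_2$ is a square, and the tools you suggest cannot do it:
\begin{itemize}
\item $\Gal(k_2/\Q)\cong\Z/4\times\Z/2$ is a $2$-group, so $\Z_2[\Gal(k_2/\Q)]$ is local and admits no character decomposition. One exists only after inverting $2$, which erases the $2$-indices at stake.
\item Local data at $2$ plus signatures can at best show that $k_2(\sqrt v)/k_2$ is unramified. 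That is no contradiction, since $A_2\neq1$.
\end{itemize}
The missing global idea is the criterion of \cref{thm:main}: for $m=p$, a unit $v\in E_n$ is a square iff $v^{1+\gamma},v^{1+\tau}\in E_n^2$, $\sqrt{v^{1+\gamma}}^{1+\tau}=\sqrt{v^{1+\tau}}^{1+\gamma}$ and $v>0$. These conditions make $k_{n+1}(\sqrt v)$ abelian over $\Q$, because the lifts of $\gamma,\tau$ commute. By \cref{lem:total} $v$ is totally positive, so this field is real and unramified outside $2p$, hence contained in $k_{n+1}$ because $\Q(\sqrt2,\sqrt p)=k_1$. Then $E_n\cap E_{n+1}^2=E_n^2$ gives $v\in E_n^2$. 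This turns maximality into a finite sign computation.

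You also misplace the candidate unit and the role of the hypotheses.
\begin{itemize}
\item With $\sigma$ generating $\Gal(k_2/k_1)$, your own relation $N_{k_2/k_1}\theta=\eta$ gives $\theta^{1+\sigma}\xi^{-1}=\eta\xi^{-1}=\pm\xi\in E_1$. This adds nothing beyond $E''_1$.
\item By \cref{cor:fund_1_eta}, the only possible new square root at level $2$ is that of some $v=\pm\varepsilon^{x_1}c_1^{x_2}\sqrt{v_1^{(a_1)}}^{x_3}c_2^{x_4+x_5\gamma}\eta_2^{1+\gamma}$, with $\gamma$ of order $4$ on $k_2$.
\item The hypothesis $N_{k'/\Q}(\varepsilon')=-1$ enters through $\eta_1=\pm(\varepsilon')^{-h_{k'}/4}$ and \cref{prop:hkp}. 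Together these give $v_1^{(a_1)}=\varepsilon c_1(\varepsilon')^{-h_{k'}/2^{a_1+1}}$ (\cref{prop:1st}).
\item This forces $x_1=x_3=x_4=x_5=0$ and $(-1)^{x_2}=\sgn\eta_1$. Then $\sqrt{v^{1+\gamma}}^{1+\tau}=(-1)^{h_{k'}/8+(p-1)/16}$, while $\sqrt{v^{1+\tau}}^{1+\gamma}=\sgn\eta_1$.
\item What $a_1\ge3$ really provides is that $h_{k'}/8$ is even, i.e.\ $(\varepsilon')^{-h_{k'}/8}=\sqrt{\lvert\eta_1\rvert}$ has $(1+\tau)$-norm $+1$. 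That $\lvert\eta_1\rvert$ is a square already holds once $a_1\ge1$, so it is not the relevant use of the hypothesis.
\item Since $\eta_1=(-1)^{(p-1)/16}N_{\Q(\zeta_{8p})/k(\zeta_8)}(1-\zeta_{8p})$, the sign hypothesis is precisely the failure of the compatibility condition. So no $v$ qualifies, and $a_2=a_1$.
\end{itemize}
Without this computation, your claim that the sign pattern ``prevents a further square root'' is only an assertion.
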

\begin{theorem}
  Let $k=\Q(\sqrt p)$, where $p$ is a prime with $p\equiv1\pmod{16}$.
  Assume that the norm of the fundamental unit of $\Q(\sqrt{2p})$ is $1$.
  If $a_1\ge1$, then $a_2>a_1$.
  Moreover, if $a_1\ge2$ and $N_{\Q(\zeta_{8p})/k(\zeta_8)}(1-\zeta_{8p})<0$,
  then $a_2=a_1+1$ and $\lambda_2(k)\le1$.
\end{theorem}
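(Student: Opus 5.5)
Throughout, $\widetilde{E}_n$ denotes the full unit group of $k_n$, $N_{n+1,n}$ the norm from $k_{n+1}$ to $k_n$, and $\mathcal{E}$ the group generated by the ideals above $p$ of $k_1$. These symbols are introduced only for this sketch.

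The plan rests on the genus/ambiguous class formula for the cyclic degree-$2$ extensions $k_{n+1}/k_n$, combined with index computations between units and cyclotomic units. For $p\equiv1\pmod{16}$ and $k=\Q(\sqrt p)$, the only primes ramified in $k_\infty/k$ lie above $2$. Since $p\equiv1\pmod{16}$, $2$ splits in $k$, and both primes above $2$ are totally ramified in $k_\infty/k_0$. Chevalley's formula for $G=\Gal(k_{n+1}/k_n)$ then reads
\[
  |A_{n+1}^G|=|A_n|\cdot\frac{2^{t-1}}{(\widetilde{E}_n:\widetilde{E}_n\cap N_{n+1,n}k_{n+1}^\times)},
\]
where $t=2$ is the number of ramified primes. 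So $|A_{n+1}^G|=2|A_n|/(\text{unit norm index})$, and the unit index is $1$ or $2$.

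\textbf{Step 1: show $a_2>a_1$ when $a_1\ge1$.} Set $n=1$. The field $k_1=\Q(\sqrt p,\sqrt2)$ contains $\Q(\sqrt{2p})$. I expect the hypothesis $N\varepsilon_{2p}=1$ to force all units of $k_1$ to be local norms at the two ramified primes. By the product formula only one Hilbert symbol needs checking, and $\varepsilon_{2p}$, $\varepsilon_p$, $1+\sqrt2$ should all give trivial symbols. Then $|A_2^G|=2|A_1|$.

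Since $|A_2|\ge|A_2^G|$, this gives $a_2\ge a_1+1$. Strictly this requires the unit index to be $1$, i.e.\ every unit of $k_1$ is a norm from $k_2$. A unit of $k_1$ is a norm locally everywhere except possibly at the two primes above $2$. I will compute these symbols using the explicit square-root-of-unit structure of $k_1$ (Kuroda's unit index formula gives $\widetilde{E}_1$ up to index $2$) together with $N\varepsilon_{2p}=1$. Here I also need $a_1\ge1$ so that $A_1$ is nontrivial, because the ambiguous classes must surject in a way compatible with Fukuda-type arguments. This is not essential for $|A_2^G|\ge2|A_1|$, but it is used later.

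\textbf{Step 2: with $a_1\ge2$ and $N_{\Q(\zeta_{8p})/k(\zeta_8)}(1-\zeta_{8p})<0$, show $a_2=a_1+1$.} I would bound $|A_2|$ from above by controlling $A_2/A_2^G$ through the second step of the ambiguous filtration. Concretely, $|A_2^{G}|/|A_2^{G}\cap\text{(strongly ambiguous)}|$ and the next quotient $A_2^{(2)}/A_2^{(1)}$ are governed by the norm index of $N_{2,1}$-images intersected with units.

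The sign condition should identify the cyclotomic unit $\eta=N_{\Q(\zeta_{8p})/k_1}(1-\zeta_{8p})$ up to squares. This is where the paper's method of iteratively extending the group of cyclotomic units enters. I would show that the unit group modulo cyclotomic units at level $2$ has $2$-part exactly one step larger than at level $1$, using the analytic class number formula $h(k_n)=[\widetilde{E}_n:C_n]$ (up to an explicit power of $2$) for the relevant real abelian fields. The negativity of the norm means that $\eta$ is not a square of a totally positive element times a unit of the subfield. This should pin down that the second filtration step is trivial, so $A_2=A_2^G$ and $|A_2|=2|A_1|$.

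\textbf{Step 3: deduce $\lambda_2(k)\le1$.} I would repeat the ambiguous-class argument at each level $n\ge2$ to get $|A_{n+1}|\le2|A_n|$, or use Iwasawa theory: the rank of $X=\varprojlim A_n$ is controlled by $A_1$ and $A_2$ via $X/\nu X$. The main obstacle is Step 2, specifically translating the sign of the cyclotomic norm into the statement that the cyclotomic unit at level $1$ is not a norm (or is a norm) from level $2$ in the right way. I would try first to compute the Hilbert symbol $(\eta,k_2/k_1)_{\mathfrak{l}}$ at a prime above $2$ via the sign at infinity and the product formula.
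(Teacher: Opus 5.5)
\textbf{The gap is in Step~2.} Nothing in Step~2 bounds $a_2$ from above, and the tool you propose cannot detect the sign hypothesis. The extension $k_2/k_1$ is totally real, so it is unramified at every infinite place. Hence the sign of $\eta_1$ never enters a norm residue symbol for $k_2/k_1$; recall $(-1)^{(p-1)/16}\eta_1=N_{\Q(\zeta_{8p})/k(\zeta_8)}(1-\zeta_{8p})$. Moreover, under the hypotheses every unit of $k_1$, in particular $\eta_1$, is already a norm from $k_2$ whatever its sign; this is exactly what your Step~1 proves once corrected as below. So Hilbert-symbol and Chevalley data on units cannot see the hypothesis.

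Proving $A_2=A_2^G$ via the second step of the ambiguous filtration would need Gras's formula. That means computing $\lvert N_{2,1}(A_2^G)\rvert$ and the norm index of the non-unit group $\Lambda_1=\{x\in k_1^\times:(x)=N_{2,1}\mathfrak{A},\ [\mathfrak{A}]\in A_2^G\}$, and you give no way to do either. Your claim that units modulo cyclotomic units grow by exactly one step is just $a_2-a_1=1$ restated via \cref{cor:sinnott}. Your sketch also never uses $a_1\ge2$, which is necessary: in \cref{tab:3} ($p\equiv17\pmod{32}$, $\eta_1>0$, so the sign hypothesis holds) the sequences $(0,1,3,3,3)$ and $(0,1,4,4,4)$ appear.

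The missing idea is the square criterion \cref{thm:main}. There, positivity of $v$ and the compatibility $\sqrt{v^{1+\gamma}}^{1+\tau}=\sqrt{v^{1+\tau}}^{1+\gamma}$ bring signs into play. Write $\sqrt{v_1^{(a_1)}}^{1+\tau}=-\sqrt{v_1^{(a_1)}}^{1+\gamma}=(-1)^t$ and $\eta_1=(-1)^s\lvert\eta_1\rvert$. The conditions on $v=(-1)^{x_0}\varepsilon^{x_1}c_1^{x_2}\sqrt{v_1^{(a_1)}}^{x_3}c_2^{x_4+x_5\gamma}\eta_2^{1+\gamma}$ become an $\F_2$-linear system. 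Its unique solution has $x_3\equiv s+(p-1)/16\pmod 2$ once $a_1\ge2$ removes the term $2^{a_1-1}t$, so the hypothesis forces $x_3=1$. If $a_2>a_1+1$, then \cref{cor:fund_1_eta} gives $v_2^{(2)}=\pm\varepsilon^{y_1}c_1^{y_2}\sqrt{v_1^{(a_1)}}^{y_3}c_2^{y_4+y_5\gamma}\eta_2$. Then $v_2^{(1)}/(v_2^{(2)})^{1+\gamma}$ is a square in $E''_1\times C_2$ containing $\sqrt{v_1^{(a_1)}}$ to an odd power, contradicting \cref{lem:etc}.

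\textbf{Steps~1 and~3.} Your Chevalley argument is a genuinely different and workable route to $a_2>a_1$; the paper instead shows the linear system above is always solvable. However, you have the roles of the hypotheses reversed. $N\varepsilon_{2p}=1$ does not make $\varepsilon_p$ a local norm, and $a_1\ge1$ is essential. For $p=17$ one has $N(35+6\sqrt{34})=1$ yet $a_1=a_2=0$, so the unit norm index is $2$; indeed $\varepsilon_{17}=4+\sqrt{17}\equiv3$ or $5\pmod8$ in $\Z_2$.

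The correct bookkeeping is as follows. At $\mathfrak{l}\mid2$ the local extension is $\Q_2(\zeta_{16})^+/\Q_2(\sqrt2)$, and $\Q_2(\zeta_{16})^+/\Q_2$ has norm group $2^{\Z}\times\{\pm1\}\times(1+16\Z_2)$. So a unit $u$ of $k_1$ has trivial symbol iff $N_{\Q_2(\sqrt2)/\Q_2}(u)$ lies in this group.
\begin{itemize}
\item For $-1$, $1+\sqrt2$ and $\varepsilon_{2p}$ this holds automatically, since their local norms are $\pm1$.
\item For $\varepsilon_p\in\Q_2$ it holds iff $\varepsilon_p\equiv\pm1\pmod8$. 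By Chevalley's formula for $k_1/k$ (with $h_k$ odd), this is equivalent to $a_1\ge1$.
\item The hypothesis $N\varepsilon_{2p}=1$ is needed only for the Kuroda unit. \cref{prop:hkp} and Kuroda's formula give unit index $2$, and the extra unit is then $\sqrt{\varepsilon_{2p}}$, again of local norm $\pm1$.
\end{itemize}
This gives $\lvert A_2^G\rvert=2\lvert A_1\rvert$.

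For Step~3, ambiguous-class counts only bound $\lvert A_{n+1}^G\rvert$ from below, not $\lvert A_{n+1}\rvert$ from above. The right input is $a_2-a_1=1<2^2-2^1$ together with \cite[Theorem~4.1]{fukudakomatsuozakitsuji_funct_2016} or \cref{cor:fkot}.
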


In the following, we consider any number fields as subfields of $\C$.
We put $\zeta_t=\exp(2\pi\sqrt{-1}/t)$ for a positive integer $t$.
For a number field $F$, we write
$\Cl{F}$, $h_F$, and $E_F$ for the ideal class group, the class number, and the unit group of $F$ respectively.
Let $\B_n$ be the maximal real subfield of $\Q(\zeta_{2^{n+2}})$ for $n\ge0$,
i.e., $\B_n=\Q(\zeta_{2^{n+2}}+\zeta_{2^{n+2}}^{-1})$.
Then $\B_\infty=\bigcup_{n\ge0}\B_n$ is the cyclotomic
$\Z_2$-extension of $\Q$,
and each $\B_n$ is the $n$-th layer of the extension.

Let $k=\Q(\sqrt m)$ be a real quadratic field
with a square-free integer $m>1$,
and put $k_n=k\B_n$.
Then $k_\infty=k\B_\infty=\bigcup_{n\ge0}k_n$ is the cyclotomic
$\Z_2$-extension of $k$.
Let $\gamma$ be the topological generator of the Galois group $\Gal(k_\infty/k)$
which satisfies $(\zeta_{2^{n+2}}+\zeta_{2^{n+2}}^{-1})^\gamma=\zeta_{2^{n+2}}^3+\zeta_{2^{n+2}}^{-3}$ for all $n$
\footnote{
  Usually $\gamma$ is taken so that $\zeta_{2^{n+2}}+\zeta_{2^{n+2}}^{-1}\mapsto\zeta_{2^{n+2}}^5+\zeta_{2^{n+2}}^{-5}$,
  but here it is taken this way to ensure compatibility with the definition of $c_n$
  (see the proof of \cref{prop:norm_tau}).
},
and let $\tau$ be the generator of $\Gal(k_\infty/\B_\infty)$.

We write $E_n=E_{k_n}$ for convenience.
Also we write the norm maps as $N_{n',n}=N_{k_{n'}/k_n}\colon E_{n'}\to E_n$
for $n\le n'$.

For $n\ge0$, let $A_n$ be the Sylow $2$-subgroup of the class group $\Cl{k_n}$,
and let $a_n$ be the non-negative integer satisfying $\lvert A_n\rvert=2^{a_n}$.
Our main purpose is to find some properties of the sequence $(a_n)_{n\ge0}$.

We will omit the case where $m$ is even,
since replacing $m$ with $m/2$ has no effect other than on $a_0$.
Also we can ignore the following trivial cases
(see \cite{iwasawa_abh_1956,ozakitaya_manu_1997}).
\begin{theorem}
  If $m$ is a prime with $m\not\equiv1\pmod8$,
  then $a_n=0$ for all $n\ge0$.
\end{theorem}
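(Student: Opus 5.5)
The plan is to show that $2$ does not divide the class number of any layer $k_n$. Each $k_n=\Q(\sqrt m)\B_n$ is a $2$-extension of $\Q$, of degree $2^{n+1}$. So I would use the standard principle for $p$-extensions: if $L/\Q$ is a finite $2$-extension in which at most one prime is ramified and it is totally ramified, then $2\nmid h_L$. (This follows from Nakayama's lemma applied to the genus/ambiguous class formula, and $2\nmid h_\Q$.) The case $m=2$ is immediate: there $k_n=\B_{n+1}$, and only $2$ ramifies in $\B_{n+1}/\Q$, totally, so $a_n=0$.

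For an odd prime $m=p$ with $p\not\equiv1\pmod8$, two primes ramify in $k_n/\Q$, namely $2$ and $p$. I would therefore split the argument in two steps.

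\emph{Step 1.} Work in the intermediate extension $k_n/\B_n$. The class number of $\B_n$ is odd by the principle above, since only $2$ ramifies and it does so totally. Ambiguous class number formula (Chevalley) for the quadratic extension $k_n/\B_n$: $|A_n^{G}| = h_{\B_n}\,2^{t-1}/[E_{\B_n}:E_{\B_n}\cap N k_n^\times]$, with $t$ the number of ramified places. It suffices to show that the $2$-part of this is trivial; then $A_n^G=1$, hence $A_n=1$ since $G$ is a $2$-group.

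\emph{Step 2.} Determine the ramification. In $k_n/\B_n$ the ramified primes lie above $p$; there is exactly one prime above $2$ in $\B_n$, and it may ramify too. The number of primes of $\B_n$ above $p$ equals the index of the decomposition group, which is governed by the order of $p$ in $(\Z/2^{n+2})^\times/\{\pm1\}$.
\begin{itemize}
\item If $p\equiv\pm3\pmod8$, the image of $p$ generates the cyclic group $\Gal(\B_n/\Q)$, so $p$ is inert and $t\le2$ (the prime above $p$, possibly the prime above $2$).
\item If $p\equiv7\pmod8$, then $k_n=\Q(\sqrt{-p})\Q(\zeta_{2^{n+2}})^+$-type considerations are needed, since $p$ can split in $\B_n$. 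There I would instead take the quadratic extension $k_n/\Q(\sqrt{2p})\cdots$, or better pass through $\Q(\sqrt{-p},\zeta_{2^{n+2}})$-style reflection.
\end{itemize}

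\emph{Main obstacle.} The expected hard step is the unit index: one must show that $E_{\B_n}$ contains enough non-norms, for example $-1$ or a cyclotomic unit, to cancel $2^{t-1}$. For $t=2$, one needs a single unit of $\B_n$ that is not a local norm at the prime above $p$; by the product formula it then fails at the prime above $2$ as well. I would try $-1$, using the Hilbert symbol $(-1,p)_p=\left(\frac{-1}{p}\right)$, which is $-1$ for $p\equiv3\pmod4$. When $p\equiv5\pmod8$ I would use a cyclotomic unit instead, for example $\zeta_{2^{n+2}}+\zeta_{2^{n+2}}^{-1}$ or $1+\sqrt2$, whose residue mod the inert prime above $p$ is a nonsquare. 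This comes down to the fact that $2$ is a nonsquare mod $p$.

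If this direct computation becomes messy for $p\equiv7\pmod8$ and large $n$, I would fall back on the cited results of Iwasawa and Ozaki–Taya. These give total ramification of a single prime in a suitable $2$-extension containing $k_n$ (with $\sqrt{-1}$ or $\sqrt{-2}$ adjoined) together with a parity argument on class numbers.
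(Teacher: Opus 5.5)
There is a genuine gap. Working over $\B_n$ is what creates your ``main obstacle,'' and the proposal never gets past it.

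For $p\equiv7\pmod8$ there is no argument at all. Here $p$ can split into as many as $g=2^n$ primes of $\B_n$ (take $p\equiv-1\pmod{2^{n+2}}$), and the prime above $2$ also ramifies in $k_n/\B_n$. So $t=g+1$, and you would have to show $[E_{\B_n}:E_{\B_n}\cap N_{k_n/\B_n}k_n^\times]=2^{g}$, i.e.\ that units of $\B_n$ realize every pattern of quadratic residue symbols at the primes above $p$. ``Reflection'' is only a gesture, and falling back on Iwasawa and Ozaki--Taya is citing the very statement to be proved.

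The cases you do treat are also wrong in detail. For $p\equiv3\pmod8$ and $n\ge1$, the unit $-1$ is a square in the residue field $\F_{p^{2^n}}\supseteq\F_{p^2}$, hence a local norm at the prime above $p$. Your symbol $(-1,p)_p$ is computed over $\Q_p$, not over the completion of $\B_n$, which is unramified of degree $2^n$ over $\Q_p$. What works is a unit of norm $-1$, e.g.\ $c_n$ (recall $N_{\B_n/\Q}(c_n)=-1$): its residue has norm $-1$, a nonsquare mod $p$. For $p\equiv5\pmod8$ the prime above $2$ is unramified in $k_n=\B_n(\sqrt p)$ since $p\equiv1\pmod4$. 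So $t=1$, and Chevalley's formula gives $A_n^G=1$ immediately. The non-norm unit you plan to find cannot exist (product formula). Indeed $1+\sqrt2$ has norm $-1$, a square mod $p$, while $\zeta_{2^{n+2}}+\zeta_{2^{n+2}}^{-1}$ is not even a unit (its norm is $\pm2$).

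The missing idea is to apply the principle from your first paragraph over $k=\Q(\sqrt p)$ rather than over $\Q$ or $\B_n$. This is the classical argument behind the paper's citation of Iwasawa.

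First, Chevalley for $k/\Q$ shows $h_k$ is odd: $t=1$ if $p=2$ or $p\equiv1\pmod4$, and $t=2$ with $-1$ a non-norm if $p\equiv3\pmod4$. The latter is exactly where your $-1$ argument is valid. Next, since $p\not\equiv1\pmod8$, $k$ has a single prime above $2$ (inert if $p\equiv5\pmod8$, ramified otherwise). It is totally ramified in $k_n/k$, because $2$ has $e=2^n$, $f=2$, resp.\ $e=2^{n+1}$, in $k_n$. No other place, finite or infinite, ramifies in $k_n/k$. Chevalley's formula for the cyclic extension $k_n/k$ then gives $\lvert\Cl{k_n}^{\Gal(k_n/k)}\rvert=h_k/[E_k:E_k\cap N_{k_n/k}k_n^\times]$, which is odd. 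So $A_n^{\Gal(k_n/k)}=1$, hence $A_n=1$ for every $n$.

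Your $\B_n$ route can be rescued as well, but only with an idea you do not supply. Let $G=\Gal(\B_n/\Q)$; then $E_{\B_n}/E_{\B_n}^2$ is a cyclic $\F_2[G]$-module generated by $c_n$. The residue symbols of $c_n$ at the $g$ primes above $p$ multiply to $\jac{N_{\B_n/\Q}(c_n)}{p}=\jac{-1}{p}=-1$. Hence the image of $c_n$ in the permutation module $\F_2[G/D]$ generates it over the local ring $\F_2[G]$, giving unit index exactly $2^g$.
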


In the following,
we fix $m>1$
assuming $m$ is odd, square-free, and
not a prime with $m\not\equiv1\pmod8$.

\section{Cyclotomic Units}
\label{sect:sinnott}

In this section, we shall define cyclotomic units
denoted by $c_n$ and $\eta_n$,
and summarize their fundamental properties.
In the case $m$ is a prime,
their properties are summarized in \cite{fukudakomatsu_funct_2014},
hence we extend them to the general $m$.
First, we define the following units.
\begin{definition}
  Let $n\ge0$.
  \begin{enumerate}
    \item Let $c_n=1+\zeta_{2^{n+2}}+\zeta_{2^{n+2}}^{-1}$.
    \item Let
    \(
      \eta_n=\zeta_{2^{n+2}}^{-\varphi(m)s/4}N_{\Q(\zeta_{2^{n+2}m})/k(\zeta_{2^{n+2}})}(1-\zeta_{2^{n+2}m})
    \),
    where $s$ is an integer with $ms\equiv1\pmod{2^{n+2}}$,
    and $\varphi$ is Euler's totient function.
  \end{enumerate}
\end{definition}
Note that from the assumption on $m$,
we see that $\varphi(m)$ is divisible by $4$.

It is well-known that $c_n$ is a unit of $\B_n$.
The element $\eta_n$ belongs to Sinnott's circular unit group of $k(\zeta_{2^{n+2}})=k_n(\sqrt{-1})$,
but is actually a real number (see \cite[Lemma~2.1]{fukudakomatsukumakawasasaki_jxm_2026}).
\begin{proposition}
  \label{prop:real}
  For any $n\ge0$, $\eta_n$ is a unit of $k_n$.
\end{proposition}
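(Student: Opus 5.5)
Three things need checking for $\eta_n$: that it is a unit, that it lies in $k(\zeta_{2^{n+2}})$, and that it is fixed by complex conjugation. Since $k(\zeta_{2^{n+2}})=k_n(\sqrt{-1})$ and $k_n$ is real, the last two together give $\eta_n\in k_n$. Put $q=2^{n+2}$ and $L=\Q(\zeta_{qm})$, $K=k(\zeta_q)$.

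\emph{Unit property and membership in $K$.} The conductor of $k$ divides $4m$ (and equals $m$ when $m\equiv1\pmod4$), so $k\subseteq\Q(\zeta_{4m})\subseteq L$ and $K\subseteq L$. Hence $N_{L/K}(1-\zeta_{qm})$ is a well-defined element of $K$. Because $qm$ is divisible by two distinct primes ($m>1$ is odd), $1-\zeta_{qm}$ is a unit of $L$. Its norm is therefore a unit of $K$, and multiplying by a root of unity $\zeta_q^{-\varphi(m)s/4}$ keeps it a unit of $K$.

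\emph{Reality.} The conjugation $\rho$ restricts to $K$, so it suffices to show $\eta_n^\rho=\eta_n$. Write $H=\Gal(L/K)$ and view it inside $(\Z/qm\Z)^\times$. Then
\[
  N_{L/K}(1-\zeta_{qm})^\rho=\prod_{h\in H}(1-\zeta_{qm}^{-h})=\prod_{h\in H}(-\zeta_{qm}^{-h})(1-\zeta_{qm}^{h})=(-1)^{|H|}\zeta_{qm}^{-\sum_{h\in H}h}\,N_{L/K}(1-\zeta_{qm}).
\]
Here $|H|=[L:K]=\varphi(m)/2$, which is even because $4\mid\varphi(m)$. So the claim reduces to the congruence
\[
  \zeta_{qm}^{-\sum_{h\in H} h}=\zeta_q^{-\varphi(m)s/2},
\]
since then $\eta_n^\rho=\zeta_q^{\varphi(m)s/4}\cdot\zeta_q^{-\varphi(m)s/2}N_{L/K}(1-\zeta_{qm})=\eta_n$.

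The main obstacle is computing $\sum_{h\in H}h \bmod qm$. The elements of $H$ are the residues $h$ with $h\equiv1\pmod q$ and $h\bmod m$ lying in the index-$2$ subgroup $H_m\subseteq(\Z/m\Z)^\times$ fixing $k$, which is the kernel of the character attached to $k$, twisted by the $\chi_{-4}$ part when $m\equiv3\pmod4$. I would split the sum using CRT.

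Modulo $q$: each $h\equiv1$, so $\sum h\equiv|H|=\varphi(m)/2\pmod q$.

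Modulo $m$: I need $\sum_{h\in H_m}h\equiv0\pmod m$. This holds because $H_m$ is stable under multiplication by any $g\in H_m$. For each prime $\ell\mid m$, choose $g\in H_m$ with $g\not\equiv1\pmod\ell$; then the sum $S$ satisfies $gS\equiv S\pmod\ell$, forcing $S\equiv0\pmod\ell$. Such a $g$ exists unless $H_m$ lies in the kernel of reduction mod $\ell$, which has index $\ell-1$. That index is $\le2$ only if $\ell=3$, and the case $\ell=3$ needs a short separate check, using $m$ composite or $\equiv1\pmod8$ and exploiting $-1$ or other elements.

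Combining by CRT, the exponent $\sum h$ is the multiple of $m$ that is $\equiv\varphi(m)/2\pmod q$. This is $m\cdot s\varphi(m)/2$, using $ms\equiv1\pmod q$. Therefore $\zeta_{qm}^{\sum h}=\zeta_q^{s\varphi(m)/2}$, which is exactly the required congruence.
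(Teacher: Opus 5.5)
Your proof is correct, and it takes a different route from the paper only in the sense that the paper gives no in-text argument. The paper simply remarks that $\eta_n$ lies in Sinnott's circular unit group of $k(\zeta_{2^{n+2}})=k_n(\sqrt{-1})$ and cites \cite[Lemma~2.1]{fukudakomatsukumakawasasaki_jxm_2026} for the fact that $\eta_n$ is real.

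Your argument is a self-contained verification. Unit-ness holds because $qm$ is not a prime power, and $K\cap\mathbb{R}=k_n$ holds because $K=k_n(\sqrt{-1})$. Using $\overline{1-\zeta}=-\zeta^{-1}(1-\zeta)$, reality reduces to the single congruence $\sum_{h\in H}h\equiv ms\varphi(m)/2\pmod{qm}$. A benefit of this route is that it shows exactly why the normalizing factor $\zeta_q^{-\varphi(m)s/4}$ appears, and where $4\mid\varphi(m)$ is needed: for the evenness of $\lvert H\rvert$ and for the integrality of the exponent.

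The only place you are vague is the case $\ell=3$, and it closes easily. If $H_m$ were contained in the kernel of reduction mod $3$, the two index-$2$ subgroups would coincide. Then the Jacobi symbol $(\cdot/m)$, which is primitive modulo the square-free $m$, would equal $(\cdot/3)$, which forces $m=3$, an excluded case.

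Alternatively, you can skip the prime-by-prime argument altogether. Since $h\bmod m$ runs over $H_m=\Gal(\Q(\zeta_m)/\Q(\sqrt{m^*}))$ with $m^*=(-1)^{(m-1)/2}m$, the quantity $\zeta_m^{\sum_{x\in H_m}x}$ equals $N_{\Q(\zeta_m)/\Q(\sqrt{m^*})}(\zeta_m)$. This is a root of unity of odd order in a quadratic field different from $\Q(\sqrt{-3})$, hence it equals $1$.
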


Next, we construct the following specific subgroups of $E_n$ using $c_n$ and $\eta_n$.
\begin{definition}
  We define the subgroups
  \begin{align}
    C_n&=\braket{c_n,c_n^\gamma,c_n^{\gamma^2},\dots,c_n^{\gamma^{2^{n-1}-1}}}\subseteq E_{\B_n},\\
    H_n&=\braket{\eta_n,\eta_n^\gamma,\eta_n^{\gamma^2},\dots,\eta_n^{\gamma^{2^{n-1}-1}}}\subseteq E_n
  \end{align}
  for $n\ge1$.
  Further, we put
  \[
    E'_n=E_0\times C_1\times H_1\times C_2\times H_2\times\dots\times C_n\times H_n\subseteq E_n
  \]
  for $n\ge0$.
\end{definition}

Using Sinnott's formula~\cite[Theorem~4.1]{sinnott_invent_1980},
we can deduce the following relationship between
the unit indices and the class numbers (see \cite{fukudakomatsukumakawasasaki_jxm_2026}).
\begin{theorem}
  \label{thm:sinnott}
  For any $n\ge0$, we have
  \[
    \gind{E_n}{E'_n}=\frac{h_{k_n}}{h_k}.
  \]
\end{theorem}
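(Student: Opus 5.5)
We reduce \cref{thm:sinnott} to Sinnott's index formula for $F_n:=k(\zeta_{2^{n+2}})=k_n(\sqrt{-1})$ and then pass to the real subfield $k_n$ through the plus part.

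Sinnott's theorem \cite[Theorem~4.1]{sinnott_invent_1980} gives $(E_{F_n}:C_{F_n})=b\,2^{g}\,(1:\dots)\,h_{F_n}^+$, where $C_{F_n}$ is Sinnott's circular unit group, $b$ is a rational constant and $g$ is a rational $2$-power index. Because the Galois group of $F_n$ is $(\Z/2)\times(\Z/2)\times\Z/2^n$, and $2$ and the primes dividing $m$ behave controllably, Sinnott's corrective factors can be computed explicitly. Here $h_{F_n}^+=h_{k_n}$. So the first step is to state this formula for $F_n$ and identify its ingredients.

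The main step is to show that the real units $E_{F_n}^+\cap C_{F_n}$, modulo roots of unity, are generated by the following:
\begin{itemize}
\item the circular units of the lower layers;
\item the Galois conjugates of $c_n$, which give the circular units of $\B_n$ relative to $\B_{n-1}$;
\item the conjugates of $\eta_n$, which give the new part coming from the conductor-$2^{n+2}m$ elements $N(1-\zeta_{2^{n+2}m})$, twisted by a root of unity so that they become real by \cref{prop:real}.
\end{itemize}
Also, for the characters of conductor exactly $2^{n+2}$ or $2^{n+2}m$, the distribution relations show that the $\gamma$-conjugates $c_n^{\gamma^i}$ and $\eta_n^{\gamma^i}$ with $0\le i<2^{n-1}$ span the rank $2^{n-1}$ new parts. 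Here $\tau$ acts on $\eta_n$ by inversion up to torsion, since $k$ is quadratic. Inducting on $n$ then gives a generating set equal to $E'_n$, up to a finite index to be computed.

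We then compare three indices:
\begin{enumerate}
\item $(E_{F_n}:E_{k_n}\mu_{F_n})$, which is $1$ or $2$ by Hasse's unit index;
\item $(C_{F_n}:C_{F_n}^+\mu)$;
\item the index of $E'_n$ in the group generated by the real cyclotomic units, including the $k$-level term $E_0$ instead of the cyclotomic unit of $k$.
\end{enumerate}
Replacing the cyclotomic unit of $k$ by the full unit group $E_0$ divides the index by $(E_0:C_k)=h_k$ (up to the standard factor $2$ absorbed by Sinnott's constant). This yields $(E_n:E'_n)=h_{k_n}/h_k$.

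We expect the main obstacle to be the bookkeeping of $2$-power factors: Sinnott's $2^b$ and index $(e^+\mathcal U:\dots)$, the Hasse unit index, and the root-of-unity twists. We would first compute these factors directly for the Galois group $(\Z/2)^2\times\Z/2^n$, and then check that they cancel exactly, using $4\mid\varphi(m)$ and the assumption that $m$ is not a prime $\not\equiv1\pmod 8$. As a consistency check, the case $n=0$ reduces to $(E_0:E_0)=1=h_k/h_k$.
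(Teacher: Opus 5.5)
Your route is the one the paper points to: the paper gives no argument of its own, only a reference to Sinnott's Theorem~4.1 applied to $k(\zeta_{2^{n+2}})=k_n(\sqrt{-1})$ and to \cite{fukudakomatsukumakawasasaki_jxm_2026}. However, you postpone the decisive step, so there is a genuine gap. The ``bookkeeping of $2$-power factors'' is not a side issue. The paper uses exactly the $2$-part, $\gind{E''_n}{E'_n}=2^{a_n-a_0}$, in \cref{cor:sinnott}. As written, your sketch gives at best $\gind{E_n}{E'_n}=2^{e}h_{k_n}/h_k$ with an undetermined $e\in\Z$.

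To close the gap you would have to determine exactly, for $F_n=k(\zeta_{2^{n+2}})$, the following quantities.
\begin{enumerate}
  \item The factors in Sinnott's formula, which you misquote. It reads $\gind{E_{F_n}}{C_{F_n}}=2^{b}h_{k_n}\gind{e^+R}{e^+U}$. Here $2^b$ depends only on the number of primes ramified in $F_n$ and can be nontrivial when $m$ has several prime factors. The module index $\gind{e^+R}{e^+U}$ has to be computed from the inertia groups and Frobenius elements in $\Gal(F_n/\Q)$.
  \item The Hasse index $\gind{E_{F_n}}{\mu_{F_n}E_n}$.
  \item The index $\gind{C_{F_n}}{\mu_{F_n}(C_{F_n}\cap k_n)}$.
  \item Above all, the exact index, inside $C_{F_n}\cap k_n$, of the group generated by $-1$, the conjugates of the $c_i,\eta_i$, and the level-$0$ circular units. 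You only assert that this index is finite. Sinnott's group is generated by norms of $1-\zeta_a$ for $a=2^jd$ with every divisor $d$ of $m$, and it also contains unit quotients of non-units. All of these must be accounted for.
\end{enumerate}

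Two of the concrete claims you do make are also wrong. First, $\tau$ does not act on $\eta_n$ by inversion up to torsion. By \cref{prop:norm_tau}, $\eta_n^{1+\tau}$ is a product of $\gamma$-conjugates of $c_n^{\pm1}$, which is typically of infinite order. For example, \cref{ex:norm_tau} gives $\eta_2^{1+\tau}=\pm c_2^{-1-\gamma}$ for $m=p\equiv9,25\pmod{32}$. These relations with $C_n$ are part of what the index computation must track.

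Second, the step ``replacing the cyclotomic unit of $k$ by $E_0$ divides the index by $\gind{E_0}{C_k}=h_k$, up to a factor $2$ absorbed by Sinnott's constant'' is unsupported. By \cref{prop:eta0}, the level-$0$ circular unit is $\eta_0=\pm1$, $\pm\varepsilon^{-h_k}$ or $\pm\varepsilon^{-h_k/2}$ according as $m\equiv1\pmod8$, $m\equiv5\pmod8$ or $m\equiv3\pmod4$. For $m\equiv1\pmod8$ the level-$0$ unit must therefore come from elsewhere, e.g.\ from $\alpha^{\tau-1}=\varepsilon^{2h_k}$ of conductor $m$. So the $2$-power discrepancy at level $0$ changes with $m$, whereas $2^b$ only sees the number of ramified primes.

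What is missing is an actual mechanism that makes all these factors cancel. One option is to compare Sinnott's formula for $F_n$ with the one for $F_0$: the ramified primes are the same, hence so is $2^b$. You would then compute how the remaining factors change and prove an exact description of $C_{F_n}\cap k_n$ in terms of its level-$0$ part and $\prod_{i\le n}(C_i\times H_i)$. Your proposal supplies none of this.
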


We want to know the $2$-part of this.
\begin{definition}
  Let $n\ge0$.
  \begin{enumerate}
    \item Let $E''_n$ be the subgroup of $E_n$
    containing $E'_n$
    so that $E''_n/E'_n$ is the Sylow $2$-subgroup of $E_n/E'_n$,
    i.e., $\gind{E_n}{E''_n}$ is odd and $\gind{E''_n}{E'_n}$ is $2$-power.
    \item Let $V_0=E_0$ and $V_n=E''_{n-1}\times C_n\times H_n$ for $n\ge1$.
    We have $E'_n\subseteq V_n\subseteq E''_n$.
  \end{enumerate}
\end{definition}

\begin{corollary}
  \label{cor:sinnott}
  \(
    \gind{E''_n}{V_n}=2^{a_n-a_{n-1}}
  \) for $n\ge1$.
\end{corollary}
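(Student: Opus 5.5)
The plan is to compute $\gind{E''_n}{V_n}$ by comparing two $2$-power indices that both come out of \cref{thm:sinnott}. Since $E'_n\subseteq V_n\subseteq E''_n$ and indices are multiplicative in towers,
\[
  \gind{E''_n}{V_n}=\frac{\gind{E''_n}{E'_n}}{\gind{V_n}{E'_n}}.
\]
So it suffices to identify the numerator and the denominator separately.

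For the numerator, \cref{thm:sinnott} gives $\gind{E_n}{E'_n}=h_{k_n}/h_k$. By definition $\gind{E''_n}{E'_n}$ is the $2$-part of $\gind{E_n}{E'_n}$, so it equals the $2$-part of $h_{k_n}/h_k$, which is $2^{a_n-a_0}$.

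For the denominator, note that $V_n=E''_{n-1}\cdot C_n\cdot H_n$ and $E'_n=E'_{n-1}\cdot C_n\cdot H_n$. I would show that the natural map
\[
  E''_{n-1}/E'_{n-1}\longrightarrow V_n/E'_n
\]
is an isomorphism, which gives $\gind{V_n}{E'_n}=\gind{E''_{n-1}}{E'_{n-1}}=2^{a_{n-1}-a_0}$ by the previous paragraph applied at level $n-1$.

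Surjectivity of this map is immediate. Injectivity amounts to $E''_{n-1}\cap E'_n=E'_{n-1}$, and this is the main point of the argument. The product defining $E'_n$ is written as a direct product, so the generators of $C_n$ and $H_n$ are multiplicatively independent modulo $E'_{n-1}$, and $E'_n$ has finite index in $E_n$. Comparing ranks, $E_{n-1}$ has rank $2^{n}-1$, and $C_n\times H_n$ has rank $2^{n-1}+2^{n-1}=2^n$, which matches $\operatorname{rank}E_n=2^{n+1}-1$.

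Now take $u\in E''_{n-1}\cap E'_n$ and write $u=e\,c\,h$ with $e\in E'_{n-1}$, $c\in C_n$, $h\in H_n$. Some power $u^{N}$ lies in $E'_{n-1}$, so $c^N h^N\in E'_{n-1}$. By directness of the product this forces $c^N h^N$ to be trivial. Since $C_n\times H_n$ is torsion-free, being a direct factor with free generators apart from $\pm1$, which already lie in $E_0$, we get $c=h=1$ and hence $u\in E'_{n-1}$.

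The step I expect to be the obstacle is justifying this directness, i.e.\ that no nontrivial product of the new generators has a power landing in $E_{n-1}$. If it is not taken as part of the definition, I would derive it from the norm relations. The elements $c_n$ and $\eta_n$ satisfy $N_{n,n-1}$ of the relevant Galois-orbit combinations trivial: for instance $c_n^{1+\gamma^{2^{n-1}}}=\pm1$ type relations, so that $C_nH_n$ lies in the kernel of $1+\gamma^{2^{n-1}}$ up to torsion. On the other hand, $E_{n-1}$ is fixed by $\gamma^{2^{n-1}}$, so an element lying in both has square that is torsion, which gives the independence.

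Dividing the two indices then yields
\[
  \gind{E''_n}{V_n}=\frac{2^{a_n-a_0}}{2^{a_{n-1}-a_0}}=2^{a_n-a_{n-1}}.
\]
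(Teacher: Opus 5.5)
Your proof is correct and is the paper's argument: \(\gind{E''_n}{E'_n}=2^{a_n-a_0}\) and \(\gind{V_n}{E'_n}=\gind{E''_{n-1}}{E'_{n-1}}=2^{a_{n-1}-a_0}\), both from \cref{thm:sinnott}. You also supply the injectivity \(E''_{n-1}\cap E'_n=E'_{n-1}\) that the paper leaves implicit, and you justify it correctly via the directness of \(E'_n\), which follows from the finite index in \cref{thm:sinnott} together with your rank count. Discard the fallback norm-relation paragraph, though: by \cref{prop:norm_gam} one has \(c_n^{1+\gamma^{2^{n-1}}}=-c_{n-1}\), which is not torsion for \(n\ge2\), so \(C_nH_n\) does not lie in the kernel of \(1+\gamma^{2^{n-1}}\) up to torsion, and that route would not establish the independence.
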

\begin{proof}
  We obtain
  $\gind{E''_n}{E'_n}=2^{a_n-a_0}$ and
  $\gind{V_n}{E'_n}=\gind{E''_{n-1}}{E'_{n-1}}=2^{a_{n-1}-a_0}$
  from \cref{thm:sinnott}.
\end{proof}

Next we state the following formulae for the norms of $c_n$ and $\eta_n$.
The norms to lower layers are simple.
\begin{proposition}
  \label{prop:norm_gam}
  For $n\ge1$, we have the following formulae.
  \begin{enumerate}
    \item $N_{n,n-1}(c_n)=-c_{n-1}$.
    \item $N_{n,n-1}(\eta_n)=(-1)^{\varphi(m)/4}\eta_{n-1}$.
  \end{enumerate}
\end{proposition}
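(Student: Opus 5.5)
The plan is to compute both norms explicitly. Each unit is written as an element of a cyclotomic field, and the norm is computed there using the single nontrivial automorphism of the relevant quadratic extension. First we identify the Galois groups. Since $m$ is odd and $m>1$, the field $k=\Q(\sqrt m)$ is not contained in $\B_\infty$ (the only quadratic subfield of $\B_\infty$ is $\Q(\sqrt2)$). So restriction gives an isomorphism $\Gal(k_n/k_{n-1})\cong\Gal(\B_n/\B_{n-1})$. Similarly, $\sqrt{-1}\notin k_n$ because $k_n$ is real, so restriction gives $\Gal(k_n(\sqrt{-1})/k_{n-1}(\sqrt{-1}))\cong\Gal(k_n/k_{n-1})$. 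In every case the group has order $2$ and is generated by the automorphism $\sigma$ with $\zeta_{2^{n+2}}\mapsto-\zeta_{2^{n+2}}$, which fixes $\sqrt m$.

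\emph{Part (1).} Put $\zeta=\zeta_{2^{n+2}}$. Since $c_n\in\B_n$, its norm from $k_n$ to $k_{n-1}$ equals its norm from $\B_n$ to $\B_{n-1}$. This is
\[
(1+\zeta+\zeta^{-1})(1-\zeta-\zeta^{-1})=1-(\zeta+\zeta^{-1})^2=-(1+\zeta^2+\zeta^{-2})=-c_{n-1},
\]
because $\zeta^2=\zeta_{2^{n+1}}$. The case $n=1$ fits this formula, giving $-c_0=-1$.

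\emph{Part (2).} We work in $L_n=k(\zeta_{2^{n+2}})=k_n(\sqrt{-1})$. By \cref{prop:real}, $\eta_n$ lies in $k_n$, and the norm restriction above shows $N_{n,n-1}(\eta_n)=N_{L_n/L_{n-1}}(\eta_n)$. We treat the two factors of $\eta_n$ separately.

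For the cyclotomic factor, we use transitivity of norms in the tower $\Q(\zeta_{2^{n+2}m})\supseteq\Q(\zeta_{2^{n+1}m})\supseteq L_{n-1}$, together with the other route $\Q(\zeta_{2^{n+2}m})\supseteq L_n\supseteq L_{n-1}$. Both routes give the norm from $\Q(\zeta_{2^{n+2}m})$ to $L_{n-1}$, so
\[
N_{L_n/L_{n-1}}N_{\Q(\zeta_{2^{n+2}m})/L_n}(1-\zeta_{2^{n+2}m})=N_{\Q(\zeta_{2^{n+1}m})/L_{n-1}}\bigl(N_{\Q(\zeta_{2^{n+2}m})/\Q(\zeta_{2^{n+1}m})}(1-\zeta_{2^{n+2}m})\bigr).
\]
Since $2\mid 2^{n+1}m$, the extension $\Q(\zeta_{2^{n+2}m})/\Q(\zeta_{2^{n+1}m})$ has degree $2$, and its nontrivial automorphism sends $\zeta_{2^{n+2}m}\mapsto-\zeta_{2^{n+2}m}$. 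Hence the inner norm is
\[
(1-\zeta_{2^{n+2}m})(1+\zeta_{2^{n+2}m})=1-\zeta_{2^{n+1}m}.
\]
This is the standard distribution relation, and it shows the cyclotomic factor maps to the corresponding factor of $\eta_{n-1}$.

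For the root of unity, take $s$ with $ms\equiv1\pmod{2^{n+2}}$. Then
\[
N(\zeta^{-\varphi(m)s/4})=(\zeta\cdot(-\zeta))^{-\varphi(m)s/4}=(-1)^{\varphi(m)s/4}\,\zeta_{2^{n+1}}^{-\varphi(m)s/4}.
\]
Here $s$ is odd, and $\varphi(m)/4$ is an integer, so the sign equals $(-1)^{\varphi(m)/4}$. The same $s$ also satisfies $ms\equiv1\pmod{2^{n+1}}$, so it is a valid choice in the definition of $\eta_{n-1}$.

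Main obstacle: the main point needing care is this last compatibility. We must check that $\eta_{n-1}$ does not depend on the choice of $s$. If $s$ is replaced by $s'$ with $s'\equiv s\pmod{2^{n+1}}$, then $\varphi(m)s/4$ and $\varphi(m)s'/4$ differ by a multiple of $2^{n+1}$, since $\varphi(m)/4$ is an integer. So the power of $\zeta_{2^{n+1}}$ is unchanged. Combining the two factors then gives $N_{n,n-1}(\eta_n)=(-1)^{\varphi(m)/4}\eta_{n-1}$.
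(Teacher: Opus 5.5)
Your proof is correct and is essentially the ``straightforward calculation'' the paper leaves to the reader. It uses the automorphism $\zeta_{2^{n+2}}\mapsto-\zeta_{2^{n+2}}$, the relation $(1-\zeta_{2^{n+2}m})(1+\zeta_{2^{n+2}m})=1-\zeta_{2^{n+1}m}$ via transitivity of norms, and the sign $(-1)^{\varphi(m)s/4}=(-1)^{\varphi(m)/4}$ for odd $s$. The one containment you use without comment, $L_{n-1}\subseteq\Q(\zeta_{2^{n+1}m})$, does hold, since $\sqrt m\in\Q(\zeta_{4m})$ and $4\mid 2^{n+1}$ for $n\ge1$.
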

\begin{proof}
  By straightforward calculations.
\end{proof}

The norms to $\B_n$ are somewhat difficult,
but we have the following explicit formula.
\begin{proposition}
  \label{prop:norm_tau}
  Let $n\ge0$.
  For a divisor $d$ of $m$, let $r_d$ be
  a non-negative integer satisfying $d\equiv\pm3^{r_d}\pmod{2^{n+3}}$.
  Then we have
  \[
    \eta_n^{1+\tau}=\prod_{d\mid m}c_n^{-\mu(d)(\gamma^{-1}+\gamma^{-2}+\dots+\gamma^{-r_d})},
  \]
  where $\mu$ is the M\"obius function.
\end{proposition}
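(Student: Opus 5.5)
The plan is to compute $\eta_n^{1+\tau}$ directly as a norm to $\Q(\zeta_{2^{n+2}})$, evaluate that norm with the M\"obius identity for roots of unity, and then rewrite each resulting factor $1-\zeta^{a}$ as a telescoping product of Galois conjugates of $c_n$.

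\textbf{Reduction to a norm.} Put $N=2^{n+2}$, $\zeta=\zeta_N$, $L=\Q(\zeta_N)$, $K=k(\zeta_N)=k_n(\sqrt{-1})$. Extend $\tau$ to the generator of $\Gal(K/L)$; this extension fixes $\zeta$. Since $\eta_n\in k_n$ by \cref{prop:real}, we get
\[
  \eta_n^{1+\tau}=\zeta^{-\varphi(m)s/2}\,N_{\Q(\zeta_{Nm})/L}(1-\zeta_{Nm}).
\]
Take $t$ with $ms+Nt=1$. Then $\zeta_{Nm}=\zeta^{s}\zeta_m^{t}$, with $\gcd(t,m)=1$. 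Hence the norm equals $\prod_{j\in(\Z/m)^\times}(1-\zeta^{s}\zeta_m^{j})$.

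\textbf{M\"obius step.} From $\prod_{x^e=1}(1-yx)=1-y^e$ and M\"obius inversion we get
\[
  \prod_{\text{primitive }x}(1-yx)=\prod_{d\mid m}(1-y^{m/d})^{\mu(d)}.
\]
With $y=\zeta^{s}$ and $sm\equiv1\pmod N$, we have $s\,m/d\equiv d^{-1}\pmod N$ for each $d\mid m$ ($d$ is odd). Therefore
\[
  N_{\Q(\zeta_{Nm})/L}(1-\zeta_{Nm})=\prod_{d\mid m}(1-\zeta^{d^{-1}})^{\mu(d)}.
\]

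\textbf{Telescoping into $c_n$.} Write $d^{-1}\equiv\pm3^{-r_d}$. For any odd $a$,
\[
  \frac{1-\zeta^{3a}}{1-\zeta^{a}}=1+\zeta^a+\zeta^{2a}=\zeta^{a}\,(1+\zeta^{a}+\zeta^{-a}),
\]
and $1+\zeta^{a}+\zeta^{-a}$ is the image of $c_n$ under $\zeta\mapsto\zeta^{a}$. With the normalization of $\gamma$ ($\zeta+\zeta^{-1}\mapsto\zeta^3+\zeta^{-3}$), taking $a=\pm3^{-i}$ gives $c_n^{\gamma^{-i}}$, since complex conjugation fixes $c_n$. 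Iterating from $i=r_d$ down to $i=1$ gives
\[
  1-\zeta^{\pm3^{-r_d}}=(1-\zeta^{\pm1})\cdot(\text{root of unity})\cdot c_n^{-(\gamma^{-1}+\dots+\gamma^{-r_d})}.
\]
Also $1-\zeta^{-1}=-\zeta^{-1}(1-\zeta)$. Since $\sum_{d\mid m}\mu(d)=0$ (as $m>1$), the factors $1-\zeta$ cancel in the product. Hence $\eta_n^{1+\tau}$ equals a root of unity $\epsilon\in L$ times $\prod_{d\mid m}c_n^{-\mu(d)(\gamma^{-1}+\dots+\gamma^{-r_d})}$.

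\textbf{The root of unity (main obstacle).} Both $\eta_n^{1+\tau}$ and the $c_n$-product are real, so $\epsilon=\pm1$. To show $\epsilon=1$, I will compute the exponent of $\zeta$ exactly. It collects three contributions:
\begin{itemize}
  \item $-\varphi(m)s/2$ from the prefactor;
  \item $\sum_i(\pm3^{-i})\mu(d)$ from the telescoping steps;
  \item the $-\zeta^{-1}$ terms, one for each $d$ with the minus sign.
\end{itemize}
Sign ambiguities such as $-1=\zeta^{N/2}$ make this a computation modulo $N$ on half-integral-looking quantities. This is precisely why $r_d$ is defined by a congruence modulo $2^{n+3}=2N$ rather than modulo $N$. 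I would first rewrite $1-\zeta^{a}=-\zeta_{2N}^{a}(\zeta_{2N}^{a}-\zeta_{2N}^{-a})$, so that each factor is a real multiple of an explicit power of $\zeta_{2N}$. Then everything reduces to a congruence modulo $2N$ among $\varphi(m)s$ and the numbers $\sum_{d\mid m}\mu(d)\,d^{-1}$ in $\Z/2N$. The last step is to check that this total is $\equiv0\pmod{2N}$, using multiplicativity over the prime factors of $m$ and $4\mid\varphi(m)$.
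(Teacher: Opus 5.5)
\textbf{Gap: the sign $\epsilon$ is never determined, and your planned ingredients cannot determine it.} Your reduction to $N_{\Q(\zeta_{Nm})/L}(1-\zeta_{Nm})$, the M\"obius step and the telescoping are correct. They give $\eta_n^{1+\tau}=\epsilon\prod_{d\mid m}c_n^{-\mu(d)(\gamma^{-1}+\dots+\gamma^{-r_d})}$ with $\epsilon=\pm1$. But fixing $\epsilon$ is the whole content of the statement.

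Carry out the bookkeeping in your notation ($N=2^{n+2}$, $\zeta=\zeta_N$). Take $s$ with $ms\equiv1\pmod{2^{n+3}}$; this does not change $\eta_n$, since $\varphi(m)/4\in\Z$. Then the prefactor, the telescoping exponents and the factors $\zeta^{-1}$ cancel identically. The total exponent of $\zeta$ is $\equiv\tfrac12\sum_{d\mid m}\mu(d)(ms/d-1)-\tfrac12\varphi(m)s=0\pmod N$, for every odd square-free $m>1$. What survives is only the $-1$ in $1-\zeta^{-1}=-\zeta^{-1}(1-\zeta)$. It occurs once for each $d$ with $d\equiv-3^{r_d}\pmod{2^{n+3}}$, i.e.\ $d\equiv5,7\pmod 8$. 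Hence $\epsilon=\prod_{d\mid m}(-2/d)$.

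The paper reaches this point with no root-of-unity bookkeeping. It evaluates the homogenized identity $Y^{\varphi(m)}\Phi_m(X/Y)=\prod_{d\mid m}(X^{m/d}-Y^{m/d})^{\mu(d)}$ at $X=\zeta_{2^{n+3}}^{-s}$, $Y=\zeta_{2^{n+3}}^{s}$, which absorbs the prefactor. It then telescopes with $(\zeta_{2^{n+3}}^{-3}-\zeta_{2^{n+3}}^{3})/(\zeta_{2^{n+3}}^{-1}-\zeta_{2^{n+3}})=c_n$, which has no root-of-unity factor. The only sign left is $(-2/d)$, coming from $3^{r_d}\equiv(-2/d)\,d\pmod{2^{n+3}}$.

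\textbf{Missing idea: $\prod_{d\mid m}(-2/d)=1$ does not follow from $4\mid\varphi(m)$.} It needs the standing hypothesis that $m$ is not a prime $\not\equiv1\pmod 8$. For composite $m$, each prime $p\mid m$ divides exactly $\sigma_0(m/p)$ of the divisors of $m$, an even number, so the product is $1$. This is where multiplicativity helps. For prime $m$, the product is $(-2/m)$, which equals $1$ only because $m\equiv1\pmod 8$; $4\mid\varphi(m)$ gives only $m\equiv1\pmod 4$.

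Without this hypothesis the formula is false. Take $m=5$, $n=0$: $4\mid\varphi(5)$, yet your own formula gives $\eta_0^{1+\tau}=\zeta_4^{-2}(1-\zeta_4)(1-\zeta_4)^{-1}=-1$. The right-hand side is a product of conjugates of $c_0=1$, so it equals $1$. So the final congruence you intend to check ``using multiplicativity over the prime factors of $m$ and $4\mid\varphi(m)$'' fails in general. You must isolate the sign $\prod_{d\mid m}(-2/d)$ and then invoke the restriction on $m$.
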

\begin{proof}
  Let $s,t$ be integers with $ms+2^{n+3}t=1$.
  We have
  \begin{align}
    \eta_n^{1+\tau}&=N_{k_n/\B_n}\Bigl(\zeta_{2^{n+2}}^{-\varphi(m)s/4}N_{\Q(\zeta_{2^{n+2}m})/k(\zeta_{2^{n+2}})}(1-\zeta_{2^{n+2}m})\Bigr)\\
    &=\zeta_{2^{n+2}}^{-\varphi(m)s/2}N_{\Q(\zeta_{2^{n+2}m})/\Q(\zeta_{2^{n+2}})}(1-\zeta_{2^{n+2}}^s\zeta_m^t)\\
    &=\zeta_{2^{n+2}}^{\varphi(m)s/2}N_{\Q(\zeta_{2^{n+2}m})/\Q(\zeta_{2^{n+2}})}(\zeta_{2^{n+2}}^{-s}-\zeta_m^t)\\
    &=\zeta_{2^{n+2}}^{\varphi(m)s/2}\Phi_m(\zeta_{2^{n+2}}^{-s}),
  \end{align}
  where $\Phi_m$ is the $m$-th cyclotomic polynomial.
  From the M\"obius inversion formula, we know
  \[
    \Phi_m(X)=\prod_{d\mid m}(X^{m/d}-1)^{\mu(d)},
  \]
  and also its homogenization
  \[
    Y^{\varphi(m)}\Phi_m(X/Y)=\prod_{d\mid m}(X^{m/d}-Y^{m/d})^{\mu(d)}.
  \]
  Substituting $X=\zeta_{2^{n+3}}^{-s}$ and $Y=\zeta_{2^{n+3}}^s$
  into this yields
  \[
    \eta_n^{1+\tau}=\zeta_{2^{n+2}}^{\varphi(m)s/2}\Phi_m(\zeta_{2^{n+2}}^{-s})
    =\prod_{d\mid m}(\zeta_{2^{n+3}}^{-ms/d}-\zeta_{2^{n+3}}^{ms/d})^{\mu(d)}.
  \]
  From the assumption on $r_d$,
  we obtain $3^{r_d}\equiv(-2/d)d\pmod{2^{n+3}}$
  since $(-2/3)=1$,
  where $(\,\cdot\,/\,\cdot\,)$ is the Jacobi symbol.
  Hence from $ms\equiv1\pmod{2^{n+3}}$,
  \[
    \zeta_{2^{n+3}}^{-ms/d}-\zeta_{2^{n+3}}^{ms/d}=\jac{-2}{d}(\zeta_{2^{n+3}}^{-1}-\zeta_{2^{n+3}})^{\gamma^{-r_d}},
  \]
  where the domain of $\gamma$ is extended as $\zeta_{2^{n+3}}^\gamma=\zeta_{2^{n+3}}^3$.
  On the other hand,
  \[
    (\zeta_{2^{n+3}}^{-1}-\zeta_{2^{n+3}})^{\gamma-1}
    =\frac{\zeta_{2^{n+3}}^{-3}-\zeta_{2^{n+3}}^3}{\zeta_{2^{n+3}}^{-1}-\zeta_{2^{n+3}}}
    =1+\zeta_{2^{n+3}}^2+\zeta_{2^{n+3}}^{-2}=c_n.
  \]
  Therefore we obtain
  \[
    c_n^{-(\gamma^{-1}+\gamma^{-2}+\dots+\gamma^{-r_d})}
    =(\zeta_{2^{n+3}}^{-1}-\zeta_{2^{n+3}})^{\gamma^{-r_d}-1}
    =\jac{-2}{d}\frac{\zeta_{2^{n+3}}^{-ms/d}-\zeta_{2^{n+3}}^{ms/d}}{\zeta_{2^{n+3}}^{-1}-\zeta_{2^{n+3}}},
  \]
  and thus
  \begin{align}
    \prod_{d\mid m}c_n^{-\mu(d)(\gamma^{-1}+\gamma^{-2}+\dots+\gamma^{-r_d})}
    &=\prod_{d\mid m}\jac{-2}{d}\left( \frac{\zeta_{2^{n+3}}^{-ms/d}-\zeta_{2^{n+3}}^{ms/d}}{\zeta_{2^{n+3}}^{-1}-\zeta_{2^{n+3}}} \right)^{\mu(d)}\\
    &=\prod_{d\mid m}\jac{-2}{d}(\zeta_{2^{n+3}}^{-ms/d}-\zeta_{2^{n+3}}^{ms/d})^{\mu(d)}
  \end{align}
  since $\sum_{d\mid m}\mu(d)=0$.
  Hence, it suffices to show that $\prod_{d\mid m}(-2/d)=1$.
  If $m$ is prime, we have $m\equiv1\ (8)$ from our assumption,
  which yields $(-2/m)=1$.
  If $m$ is composite,
  \[
    \prod_{d\mid m}\jac{-2}{d}
    =\prod_{d\mid m}\prod_{p\mid d}\jac{-2}{p}
    =\prod_{p\mid m}\jac{-2}{p}^{\sigma_0(m/p)}=1,
  \]
  where $p$ runs through the prime divisors
  and $\sigma_0(m/p)$ is the number of the divisors of $m/p$
  (equivalently, the number of $d$ with $p\mid d\mid m$),
  which is even.
\end{proof}

\begin{example}
  \label{ex:norm_tau}
  Assume that $m=p$ is a prime (with $p\equiv1\pmod8$).
  Then
  \[
    \eta_n^{1+\tau}=c_n^{\gamma^{-1}+\gamma^{-2}+\dots+\gamma^{-r_p}}.
  \]
  Using \cref{prop:norm_gam} we obtain
  \begin{align}
    \eta_1^{1+\tau}&=\begin{cases}
      1&(p\equiv1\pmod{16})\\
      -1&(p\equiv9\pmod{16}),
    \end{cases}\\
    \eta_2^{1+\tau}&=\begin{cases}
      1&(p\equiv1\pmod{32})\\
      -c_2^{-1-\gamma}&(p\equiv9\pmod{32})\\
      -1&(p\equiv17\pmod{32})\\
      c_2^{-1-\gamma}&(p\equiv25\pmod{32}).
    \end{cases}
  \end{align}
\end{example}

The unit $\eta_0$ can be expressed explicitly as follows.
\begin{proposition}
  \label{prop:eta0}
  Let $\varepsilon>1$ be the fundamental unit of $k$.
  Then,
  \[
    \eta_0=\begin{cases}
      \pm1&(m\equiv1\pmod8)\\
      \pm\varepsilon^{-h_k}&(m\equiv5\pmod8)\\
      \pm\varepsilon^{-h_k/2}&(m\equiv3\pmod4),
    \end{cases}
  \]
\end{proposition}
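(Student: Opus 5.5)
For $n=0$ we have $\zeta_4=\sqrt{-1}$. The idea is to identify $\eta_0$, up to a root of unity, with the classical cyclotomic unit of $k$ arising from the conductor of $k$, and then to invoke the analytic class number formula. Write $N=N_{\Q(\zeta_{4m})/k(\sqrt{-1})}$. The conductor $f$ of $k$ is $m$ if $m\equiv1\pmod4$ and $4m$ if $m\equiv3\pmod4$.

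First I would compute $N(1-\zeta_{4m})$ by splitting over the intermediate field $\Q(\zeta_{4m})\supseteq K\supseteq\Q(\zeta_m)$ and using norm relations for the prime power $4$. For $m\equiv1\pmod4$ we have $k\subseteq\Q(\zeta_m)$. Hence
\[
  N(1-\zeta_{4m})=N_{\Q(\zeta_{4m})/k(\sqrt{-1})}(1-\zeta_{4m}),
\]
and under the isomorphism $\Gal(\Q(\zeta_{4m})/k(\sqrt{-1}))\cong\Gal(\Q(\zeta_m)/k)$ we get $\zeta_{4m}=\zeta_4^{s'}\zeta_m^{t'}$. So the norm becomes $\prod_{\sigma}(1-\zeta_4^{s'}\zeta_m^{t'\sigma})$, with $\sigma$ running over $\Gal(\Q(\zeta_m)/k)$.

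Next I would apply the Galois element complex conjugation restricted to $\zeta_4$, i.e.\ compare with $\eta_0^\tau$. Since $\eta_0$ is real (\cref{prop:real}), it suffices to compute $\eta_0^{1+\tau}$ and $\eta_0/\eta_0^\tau$.
\begin{enumerate}
  \item From \cref{prop:norm_tau} with $n=0$, we have $\eta_0^{1+\tau}=\prod c_0^{\dots}$, and $c_0=1$. Hence $N_{k/\Q}(\eta_0)=1$.
  \item Thus $\eta_0^\tau=\eta_0^{-1}$, so $\eta_0=\pm\varepsilon^{j}$ for some integer $j$ (a norm-$1$ unit). When $N\varepsilon=-1$ this forces $j$ even.
\end{enumerate}
It remains to determine $j$. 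For this I would take logarithms:
\[
  \log|\eta_0|=\tfrac12\bigl(\log|\eta_0|-\log|\eta_0^\tau|\bigr)=\tfrac12\sum_{a}\chi(a)\log|1-\zeta_4^{\ast}\zeta_m^{a}|,
\]
where $\chi$ is the quadratic character of $k$. This expression is a Gauss-type sum $\sum_{a \bmod 4m}\chi'(a)\log|1-\zeta_{4m}^a|$ for the character $\chi'$ of conductor dividing $4m$ attached to $k$, possibly twisted by the factor coming from $\zeta_4$.

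The key step is to recognize this sum via $L(1,\chi)$. For $f=m$, the Euler factor at $2$ appears: $\sum_{a\bmod 4m}\chi(a)\log|1-\zeta_{4m}^a|$ equals $(1-\chi(2))\sum_{a\bmod m}\chi(a)\log|1-\zeta_m^a|$ up to the imprimitive correction. The primitive sum equals $-2h_k\log\varepsilon$ by Dirichlet's class number formula $L(1,\chi)=2h_k\log\varepsilon/\sqrt m$. Then $\chi(2)=1$ for $m\equiv1\pmod8$ gives $\eta_0=\pm1$, while $\chi(2)=-1$ for $m\equiv5\pmod8$ gives the factor $2$ and hence $\varepsilon^{-h_k}$. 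For $m\equiv3\pmod4$, the conductor is $4m$ itself, so no Euler factor appears, and one obtains $\varepsilon^{-h_k/2}$. The prefactor $\zeta_4^{-\varphi(m)s/4}$ only affects the sign, and the sign is left unspecified by the $\pm$.

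I expect the main obstacle to be the bookkeeping of the distribution relation between the norm over $k(\sqrt{-1})$ and the sum over characters mod $4m$. In particular, one must pin down the exact constant (the factor $1/2$ versus the $2$) and the sign of the exponent. I would check the normalization on a small example such as $m=5$ or $m=3\cdot 5$.
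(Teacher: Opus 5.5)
Your proposal is correct. It is the logarithmic, analytic counterpart of the paper's exact multiplicative argument, and the constants you were unsure about (the factor $\tfrac12$ versus $2$, and the sign of the exponent) come out right.

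\textbf{Your constants.}
\begin{itemize}
\item Since $\eta_0\in E_k=\pm\varepsilon^{\Z}$ and $|\eta_0\eta_0^\tau|=1$ holds for every unit of $k$, it suffices to find $\log|\eta_0|$. Your appeal to \cref{prop:norm_tau} with $c_0=1$ is valid but not needed.
\item Extend $\tau$ to $\Q(\zeta_{4m})$ so that it fixes $\zeta_4$. Then $\log|\eta_0|=\tfrac12\sum\chi(a)\log|1-\zeta_{4m}^a|$, summed over $a\in(\Z/4m\Z)^\times$ with $a\equiv1\pmod4$.
\item Pairing $a\leftrightarrow-a$ ($\chi$ is even) shows this equals $\tfrac14S_{4m}$, where $S_M=\sum_{a\in(\Z/M\Z)^\times}\chi(a)\log|1-\zeta_M^a|$. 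Phrasing it this way also removes the ``twist'' you mention when $m\equiv3\pmod4$.
\item For $m\equiv1\pmod4$, pair $a$ with $a+2m$. This gives $(1-\zeta_{4m}^a)(1+\zeta_{4m}^a)=1+\zeta_m^c=(1-\zeta_m^{2c})/(1-\zeta_m^c)$ with $2c\equiv a\pmod m$. Hence $S_{4m}=(1-\chi(2))S_m$ exactly, with no further imprimitive correction.
\item Dirichlet's formula gives $S_d=-\sqrt d\,L(1,\chi)=-2h_k\log\varepsilon$ for the discriminant $d$. Note that $d=4m$, not $m$, when $m\equiv3\pmod4$.
\item So $\log|\eta_0|=-\tfrac12(1-\chi(2))h_k\log\varepsilon$ for $m\equiv1\pmod4$, and $\log|\eta_0|=-\tfrac12h_k\log\varepsilon$ for $m\equiv3\pmod4$. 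These are exactly the claimed exponents.
\end{itemize}

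\textbf{Comparison with the paper.} The paper uses Hecke's product form $\varepsilon^{2h_k}=\alpha^{\tau-1}$ with $\alpha=N_{\Q(\zeta_d)/k}(1-\zeta_d)$. It then computes $\eta_0^2=N_{k(\zeta_4)/k}(\eta_0)=N_{\Q(\zeta_{4m})/k}(1-\zeta_{4m})$ algebraically, using the same norm relations you use. Its step $\eta_0^2=N_{k(\zeta_4)/k}(\eta_0)$ plays the role of your pairing $a\leftrightarrow-a$. Its use of $1+\zeta_m=(1-\zeta_m^2)/(1-\zeta_m)$ is your Euler factor $1-\chi(2)$.

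What each buys:
\begin{itemize}
\item The paper's version yields exact identities such as $\eta_0^2=\alpha$ or $\eta_0^2=\alpha^{1-\tau}$, with no analytic normalizations to track. It is also reused almost verbatim for \cref{prop:norm_eta1}.
\item Your version exhibits the dependence on $m\bmod8$ transparently, as the Euler factor at $2$ of $L(1,\chi)$. It determines only $|\eta_0|$, which suffices here because the statement leaves the sign open.
\end{itemize}
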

\begin{proof}
  We use the following famous class number formula for real quadratic fields
  (see e.g.~\cite[Theorem~152]{hecke_gtm_1981}):
  \[
    \varepsilon^{2h_k}=\prod_{x=1}^{d-1}(\zeta_{2d}^x-\zeta_{2d}^{-x})^{-\chi(x)},
  \]
  where $d$ is the discriminant of $k$
  and $\chi$ is the quadratic character associated to $k$.
  Let
  \[
    \alpha=N_{\Q(\zeta_d)/k}(1-\zeta_d)=\prod_{\begin{smallmatrix}
      1\le x<d\\\chi(x)=1
    \end{smallmatrix}}(1-\zeta_d^x).
  \]
  From the above formula,
  $\varepsilon^{2h_k}$ equals to $\alpha^{\tau-1}$
  up to a root of unity.
  Since $1-\zeta_d$ is totally imaginary,
  $\alpha$ is totally positive.
  Hence we obtain $\varepsilon^{2h_k}=\alpha^{\tau-1}$.

  \cref{prop:real} yields
  \[
    \eta_0^2=N_{k(\zeta_4)/k}(\eta_0)=N_{\Q(\zeta_{4m})/k}(1-\zeta_{4m}).
  \]

  Assume $m\equiv3\ (4)$.
  Then $\eta_0^2=\alpha$.
  Since $\alpha^{1+\tau}=N_{k/\Q}(\eta_0)^2=1$,
  we obtain
  \[
    \varepsilon^{2h_k}=\alpha^{\tau-1}=\alpha^{-2}=\eta_0^{-4}.
  \]

  Assume $m\equiv1\ (4)$.
  Then
  \begin{align}
    \eta_0^2&=N_{\Q(\zeta_m)/k}(N_{\Q(\zeta_{4m})/\Q(\zeta_m)}(1-\zeta_{4m}))\\
    &=N_{\Q(\zeta_m)/k}((1-\zeta_{4m})(1+\zeta_{4m}))\\
    &=N_{\Q(\zeta_m)/k}(1-\zeta_{2m})\\
    &=N_{\Q(\zeta_m)/k}(1+\zeta_m^{\frac{1+m}{2}})\\
    &=\begin{cases}
      N_{\Q(\zeta_m)/k}(1+\zeta_m)&(\chi(2)=1),\\
      N_{\Q(\zeta_m)/k}(1+\zeta_m)^\tau&(\chi(2)=-1).
    \end{cases}
  \end{align}
  If $m\equiv1\ (8)$ we have
  \[
    \eta_0^2\alpha=N_{\Q(\zeta_m)/k}(1-\zeta_m^2)=\alpha,
  \]
  hence $\eta_0^2=1$.
  If $m\equiv5\ (8)$ we have
  \[
    \eta_0^2\alpha^\tau=N_{\Q(\zeta_m)/k}(1-\zeta_m^2)^\tau=\alpha,
  \]
  hence $\eta_0^2=\alpha^{1-\tau}=\varepsilon^{-2h_k}$.
\end{proof}

Similarly, the following formula can be derived.
This will be used in \cref{sect:prime}.
\begin{proposition}
  \label{prop:norm_eta1}
  \[
    N_{k_1/k'}(\eta_1)=\pm(\varepsilon')^{-h_{k'}/2},
  \]
  where $k'=\Q(\sqrt{2m})$
  and $\varepsilon'>1$ is the fundamental unit of $k'$.
\end{proposition}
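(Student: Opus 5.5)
The plan is to imitate the proof of \cref{prop:eta0}, replacing $k$ by $k'=\Q(\sqrt{2m})$, whose discriminant is $8m$ since $m$ is odd. The relevant fields are as follows. We have $k_1=\Q(\sqrt m,\sqrt2)$, and $k'$ is the fixed field of $\gamma\tau$, because $\gamma$ negates $\sqrt2$ and $\tau$ negates $\sqrt m$. Moreover $k(\zeta_8)=k_1(\sqrt{-1})$, so $k(\zeta_8)/k'$ has degree $4$. The goal is to compute $N_{k_1/k'}(\eta_1)^2$ as a norm of a cyclotomic number down to $k'$, and then apply the class number formula for $k'$.

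First step. By \cref{prop:real}, $\eta_1$ is real, so complex conjugation fixes it and $N_{k(\zeta_8)/k_1}(\eta_1)=\eta_1^2$. Hence
\[
  N_{k_1/k'}(\eta_1)^2=N_{k(\zeta_8)/k'}(\eta_1)
  =N_{k(\zeta_8)/k'}\bigl(\zeta_8^{-\varphi(m)s/4}\bigr)\cdot N_{\Q(\zeta_{8m})/k'}(1-\zeta_{8m}).
\]
The first factor is a root of unity lying in the real field $k'$, so it is $\pm1$. Write $\alpha'=N_{\Q(\zeta_{8m})/k'}(1-\zeta_{8m})$. Since $8m$ is the discriminant of $k'$, this is exactly the element $\alpha$ of \cref{prop:eta0} for the field $k'$. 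As in that proof, $\alpha'$ is totally positive, because it is a product of norms of the totally imaginary element $1-\zeta_{8m}$. The same class number formula \cite[Theorem~152]{hecke_gtm_1981} then gives $(\varepsilon')^{2h_{k'}}=\alpha'^{\tau'-1}$, where $\tau'$ is the nontrivial automorphism of $k'$.

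Second step. We need $\alpha'^{1+\tau'}$. It equals $N_{\Q(\zeta_{8m})/\Q}(1-\zeta_{8m})=\Phi_{8m}(1)=1$, since $8m$ is not a prime power. Therefore $(\varepsilon')^{2h_{k'}}=\alpha'^{-2}$. Since both $\alpha'$ and $\varepsilon'$ are positive, this gives $\alpha'=(\varepsilon')^{-h_{k'}}$. Combining with the first step, $N_{k_1/k'}(\eta_1)^2=\pm(\varepsilon')^{-h_{k'}}$. The left side is positive, so the sign is $+$.

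Final step. We take square roots in $k'$, and this also shows that $h_{k'}$ is even. Write $x=N_{k_1/k'}(\eta_1)\in E_{k'}$, so $x=\pm(\varepsilon')^j$ for some integer $j$. Then $(\varepsilon')^{2j}=(\varepsilon')^{-h_{k'}}$, and since $\varepsilon'$ has infinite order, $h_{k'}=-2j$ is even and $x=\pm(\varepsilon')^{-h_{k'}/2}$.

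The main point needing care is the first step. One must check that the norm from $k(\zeta_8)$ to $k'$ composes correctly with $N_{\Q(\zeta_{8m})/k(\zeta_8)}$, which holds by transitivity of norms. One must also handle the root-of-unity prefactor, which only contributes the sign $\pm1$. The rest is the same computation as in \cref{prop:eta0}, and it is in fact simpler, because $\Phi_{8m}(1)=1$ removes any dependence on $m\bmod 8$.
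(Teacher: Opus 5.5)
Your proof is correct and follows the paper's argument. You identify $N_{k_1/k'}(\eta_1)^2=N_{k(\zeta_8)/k'}(\eta_1)$ with $\alpha'=N_{\Q(\zeta_{8m})/k'}(1-\zeta_{8m})$, get $(\varepsilon')^{2h_{k'}}=(\alpha')^{\tau'-1}$ from the class number formula as in \cref{prop:eta0}, and use $(\alpha')^{1+\tau'}=1$. The only differences are cosmetic: you obtain $(\alpha')^{1+\tau'}=1$ from $\Phi_{8m}(1)=1$ rather than from $N_{k_1/\Q}(\eta_1)^2=1$, and you remove the root-of-unity prefactor by positivity (its norm is in fact exactly $1$), whereas the paper writes the equality directly.
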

\begin{proof}
  Similarly as the proof of \cref{prop:eta0},
  we can show $(\varepsilon')^{2h_{k'}}=\alpha^{\tau-1}$ and
  \[
    N_{k_1/k'}(\eta_1)^2=N_{k(\zeta_8)/k'}(\eta_1)=N_{\Q(\zeta_{8m})/k'}(1-\zeta_{8m})=\alpha.
  \]
  Since $\alpha^{1+\tau}=N_{k_1/\Q}(\eta_1)^2=1$,
  we have
  \[
    (\varepsilon')^{2h_{k'}}=\alpha^{\tau-1}=\alpha^{-2}=N_{k_1/k'}(\eta_1)^{-4}.
  \]
\end{proof}

Next we state a formula for the signs of the conjugates of $c_n$.
\begin{proposition}
  \label{prop:sign_c}
  Let $i$ be a non-negative integer,
  and $d$ be the integer with $0<d<2^{n+2}$ and $d\equiv3^i\pmod{2^{n+2}}$.
  Then we have
  \[
    \sgn c_n^{\gamma^i}=\begin{cases}
      -1&(2^{n+2}/3<d<2^{n+3}/3),\\
      1&(\text{otherwise}).
    \end{cases}
  \]
\end{proposition}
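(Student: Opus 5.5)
The plan is to compute $c_n^{\gamma^i}$ explicitly as a real number and read off its sign. By definition $c_n=1+\zeta_{2^{n+2}}+\zeta_{2^{n+2}}^{-1}=1+2\cos(2\pi/2^{n+2})$. The automorphism $\gamma$ acts on $\B_n$ through $\zeta_{2^{n+2}}\mapsto\zeta_{2^{n+2}}^3$ (restricted to the real subfield), so $\gamma^i$ sends $\zeta_{2^{n+2}}+\zeta_{2^{n+2}}^{-1}$ to $\zeta_{2^{n+2}}^{3^i}+\zeta_{2^{n+2}}^{-3^i}$. Since $d\equiv3^i\pmod{2^{n+2}}$, this equals $\zeta_{2^{n+2}}^{d}+\zeta_{2^{n+2}}^{-d}$. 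Hence
\[
  c_n^{\gamma^i}=1+2\cos\Bigl(\frac{2\pi d}{2^{n+2}}\Bigr).
\]
This realizes $c_n^{\gamma^i}$ as a real number via our fixed embedding in $\C$; note that $\gamma^i$ is an automorphism of $\B_n$, so no choice of embedding is involved beyond the one fixed in the paper.

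Next, determine when $1+2\cos\theta<0$ for $\theta=2\pi d/2^{n+2}\in(0,2\pi)$. This happens exactly when $\cos\theta<-1/2$, that is, when $2\pi/3<\theta<4\pi/3$. Translating back, the condition becomes $2^{n+2}/3<d<2^{n+3}/3$. The value $\cos\theta=-1/2$ never occurs, because $3\nmid 2^{n+2}$, so $d$ can never equal $2^{n+2}/3$ or $2^{n+3}/3$. In particular $c_n^{\gamma^i}\neq0$, which is anyway clear since it is a unit. Outside this interval, $1+2\cos\theta>0$, which gives the sign $+1$.

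There is no real obstacle here. The only point needing care is to check that the action of $\gamma$ on $c_n$ is indeed through exponent $3$ modulo $2^{n+2}$, which is the normalization fixed in the definition of $\gamma$, and that $d$ is only determined modulo $2^{n+2}$. The latter is harmless because the cosine depends only on $d$ modulo $2^{n+2}$, and the representative is fixed in $(0,2^{n+2})$.
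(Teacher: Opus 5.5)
Your proof is correct and follows the same idea as the paper: evaluate $c_n^{\gamma^i}$ explicitly as a real number and read off its sign. The paper extends $\gamma$ to $\zeta_{2^{n+3}}\mapsto\zeta_{2^{n+3}}^3$, writes $c_n^{\gamma^i}=(\zeta_{2^{n+3}}^{-3d}-\zeta_{2^{n+3}}^{3d})/(\zeta_{2^{n+3}}^{-d}-\zeta_{2^{n+3}}^{d})$ and compares the arguments of numerator and denominator; this is equivalent to your $1+2\cos(2\pi d/2^{n+2})$ via $1+2\cos 2x=\sin 3x/\sin x$, and your version is slightly more direct because it needs neither the extension of $\gamma$ nor the bookkeeping of the sign ambiguity in $\zeta_{2^{n+3}}^{\gamma^i}$.
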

\begin{proof}
  Extend the domain of $\gamma$ as $\zeta_{2^{n+3}}^\gamma=\zeta_{2^{n+3}}^3$.
  Since $\zeta_{2^{n+2}}^{\gamma^i}=\zeta_{2^{n+2}}^d$,
  we have
  $\zeta_{2^{n+3}}^{\gamma^i}=\pm\zeta_{2^{n+3}}^d$.
  Hence
  \[
    c_n^{\gamma^i}
    =(1+\zeta_{2^{n+3}}^2+\zeta_{2^{n+3}}^{-2})^{\gamma^i}
    =\left( \frac{\zeta_{2^{n+3}}^{-3}-\zeta_{2^{n+3}}^3}{\zeta_{2^{n+3}}^{-1}-\zeta_{2^{n+3}}} \right)^{\gamma^i}
    =\frac{\zeta_{2^{n+3}}^{-3d}-\zeta_{2^{n+3}}^{3d}}{\zeta_{2^{n+3}}^{-d}-\zeta_{2^{n+3}}^d}
  \]
  The argument of the denominator is $-\pi/2$.
  It is easy to see that the argument of the numerator
  is $\pi/2$ if and only if $2^{n+2}/3<d<2^{n+3}/3$.
\end{proof}

% \begin{proposition}
%   \label{prop:sign_eta}
%   Let $i$ be a non-negative integer.
%   We define a subset of $\Z$ as
%   \[
%     Z_i=\set{0<z<m|((-1)^iz/m)=1,\,z\equiv mj\pmod{2^{n+2}}\text{ for some }1\le j\le s_i},
%   \]
%   where $s_i$ is the integer satisfying $0<s_i<2^{n+2}$ and $ms_i\equiv3^i\pmod{2^{n+2}}$.
%   Then we have
%   \[
%     \sgn\eta_n^{\gamma^i}=\begin{cases}
%       (-1)^{\#Z_i}&(m\equiv1\pmod4)\\
%       (-1)^{\#Z_i+h_{\Q(\sqrt{-m})}/2}&(m\equiv3\pmod4).
%     \end{cases}
%   \]
% \end{proposition}

In later sections, we shall consider whether
a unit is a perfect square.
For a unit of $\B_n$,
being square in $\B_n$ and being square in $k_n$ are equivalent.
\begin{proposition}
  \label{prop:index_tau}
  For $n\ge0$, we have
  $E_{\B_n}\cap E_n^2=E_{\B_n}^2$.
\end{proposition}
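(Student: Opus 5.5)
The plan is a direct Kummer-type argument for the quadratic extension $k_n/\B_n$. Since $k_n=\B_n(\sqrt m)$ has degree $2$ over $\B_n$ with $\Gal(k_n/\B_n)=\braket{\tau}$, the inclusion $E_{\B_n}^2\subseteq E_{\B_n}\cap E_n^2$ is trivial. So we only need the reverse inclusion.

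Let $u\in E_{\B_n}$ with $u=v^2$ for some $v\in E_n$. Applying $\tau$ gives $(v^\tau)^2=u^\tau=u=v^2$, so $v^\tau=\pm v$. We split into these two cases.

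If $v^\tau=v$, then $v$ lies in the fixed field $\B_n$. Being a unit of $k_n$, it is a unit of $\B_n$, so $u\in E_{\B_n}^2$ as required.

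If $v^\tau=-v$, then $v$ lies in the $(-1)$-eigenspace of $\tau$ on $k_n$, which is $\B_n\sqrt m$. Hence $v=w\sqrt m$ for some $w\in\B_n^\times$, and so $u=mw^2$. We now derive a contradiction by comparing valuations. Since $m>1$ is square-free and odd, choose a prime $p\mid m$. The field $\B_n$ lies in $\Q(\zeta_{2^{n+2}})$, so $p$ is unramified in $\B_n$. Hence for any prime $\mathfrak P$ of $\B_n$ above $p$ we have $v_{\mathfrak P}(m)=v_p(m)=1$. On the other hand, $u$ is a unit, so
\[
  0=v_{\mathfrak P}(u)=v_{\mathfrak P}(m)+2v_{\mathfrak P}(w)=1+2v_{\mathfrak P}(w),
\]
which is impossible by parity. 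Therefore this case cannot occur, and the proposition follows.

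The only point needing care is the unramifiedness of odd primes in $\B_n$, and that is standard. One could also argue at the level of ideals, but the valuation argument is the cleanest.
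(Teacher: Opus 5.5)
Your proof is correct and is essentially the paper's argument. The paper uses Kummer theory to get $\B_n(\sqrt u)=k_n$ and hence $\sqrt{mu}\in\B_n$; this is your case $v^\tau=-v$ with $u=mw^2$, made explicit through the $\tau$-eigenspace decomposition. It then finishes with the same parity-of-valuation contradiction at a prime of $\B_n$ above a prime divisor of $m$, using that $\B_n/\Q$ is unramified outside $2$.
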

\begin{proof}
  Take a prime divisor $p$ of $m$ and
  a prime ideal $\mathfrak{p}$ of $\B_n$ above $p$.
  Let
  $u\in E_{\B_n}\cap E_n^2\smallsetminus E_{\B_n}^2$.
  Then $\B_n\subsetneq\B_n(\sqrt u)\subseteq k_n$,
  hence $\B_n(\sqrt u)=k_n$.
  From the Kummer theory, we have
  $\sqrt{mu}\in\B_n$.
  Hence the $\mathfrak{p}$-adic valuation of $m$ must be even,
  since $u$ is a unit.
  This contradicts that $m$ is square-free and $\B_n/\Q$ is unramified outside $2$.
\end{proof}

However, for a unit of $k_n$,
being square in $k_n$ and being square in higher layers are not equivalent in general.
\begin{proposition}
  \label{prop:index_gam}
  Let $n\ge0$.
  \begin{enumerate}
    \item We have $\gind{E_n\cap E_{n+1}^2}{E_n^2}=1\text{ or }2$,
    i.e., if there is $u\in E_n$ which is not square in $E_n$ but becomes square in $E_{n+1}$,
    then such $u$ is unique up to $E_n^2$.
    \label{it:index_12}
    \item If $m\equiv1\pmod4$, then $E_n\cap E_{n+1}^2=E_n^2$.
    \label{it:index_mod}
    \item If $n<n'$, then $\gind{E_n\cap E_{n'}^2}{E_n^2}=\gind{E_n\cap E_{n+1}^2}{E_n^2}$.
    \label{it:index_jump}
    \item If $n\le n'$, then $\gind{E_n\cap E_{n+1}^2}{E_n^2}\ge\gind{E_{n'}\cap E_{n'+1}^2}{E_{n'}^2}$.
    \label{it:index_dec}
    \item $\gind{E_n\cap E_{n+1}^2}{E_n^2}=\gind{E''_n\cap(E''_{n+1})^2}{(E''_n)^2}$.
    \label{it:index_pp}
  \end{enumerate}
\end{proposition}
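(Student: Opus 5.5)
Throughout, the tool is Kummer theory for the quadratic extension $k_{n+1}/k_n$. We have $k_{n+1}=k_n(\sqrt{\beta_n})$ with $\beta_n=2+\zeta_{2^{n+2}}+\zeta_{2^{n+2}}^{-1}\in\B_n$. For $u\in E_n\smallsetminus E_n^2$, the extension $k_n(\sqrt u)$ equals $k_{n+1}$ exactly when $u\beta_n\in(k_n^\times)^2$. So the classes of $E_n\cap E_{n+1}^2$ modulo $E_n^2$ inject into $k_n^\times/(k_n^\times)^2$ inside the subgroup generated by the class of $\beta_n$, which has order $2$. This gives (\ref{it:index_12}).

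For (\ref{it:index_mod}), suppose $u\beta_n=x^2$ with $x\in k_n$. The element $\beta_n=(\zeta_{2^{n+3}}+\zeta_{2^{n+3}}^{-1})^2/\ldots$ generates the ideal $\mathfrak{l}$, the unique prime above $2$ in $\B_n$, to the first power. If $m\equiv1\pmod4$, then $2$ is unramified in $k/\Q$, so $\mathfrak{l}$ is unramified in $k_n/\B_n$. Hence every prime of $k_n$ above $\mathfrak{l}$ occurs in $\beta_n$ with odd exponent $1$. That contradicts $u\beta_n$ being a square times a unit. (When $m\equiv3\pmod4$ the prime ramifies and the argument fails, as it should.)

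For (\ref{it:index_jump}), induct on $n'$. Suppose $u\in E_n\cap E_{n'}^2$, and take the smallest $j$ with $n<j\le n'$ such that $u\in E_j^2$. If $j>n+1$, then $u\notin E_{j-1}^2$ but $u\in E_j^2$, so by the Kummer description $u\equiv\beta_{j-1}$ modulo squares of $k_{j-1}$. But $u$ lies in $k_n\subseteq k_{j-2}$, so $k_{j-1}(\sqrt u)=k_j$. Applying $N_{k_{j-1}/k_{j-2}}$, or alternatively arguing via Galois groups, $k_{j-2}(\sqrt u)$ is a quadratic extension of $k_{j-2}$ inside $k_j$ and not equal to $k_{j-1}$. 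Since $k_j/k_{j-2}$ is cyclic of order $4$, this is a contradiction. Here I must check that $u\notin E_{j-2}^2$ forces $k_{j-2}(\sqrt u)$ to be a quadratic field. Hence $j=n+1$, and the two indices agree.

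For (\ref{it:index_dec}), it suffices to show that if the index at level $n'$ is $2$, witnessed by $u'$, then the index at level $n$ is $2$. The plan is to push $u'$ down by the norm: take $u=N_{n',n}(u')$ and show it is a nonsquare in $E_n$ that becomes a square in $E_{n+1}$. Intrinsically, the index is $2$ iff $\beta_n$ is a unit times a square in $k_n$, i.e.\ iff $\sqrt{\beta_n}$ generates a principal ideal up to units, i.e.\ iff $k_{n+1}=k_n(\sqrt u)$ for a unit $u$. If $k_{n'+1}=k_{n'}(\sqrt{u'})$, I would try to show $k_{n+1}=k_n(\sqrt{N(u')})$ via the compatibility $N_{k_{n'}/k_n}$ of Kummer generators: the image of $\beta_{n'}$ under the norm is $\beta_n$ up to squares. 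This follows from $N_{n',n}(\beta_{n'})$ computed through $2+\zeta+\zeta^{-1}=(\zeta_{2^{n+3}}+\zeta_{2^{n+3}}^{-1})^2$-type identities. This step is the main obstacle, because the norm of a square class need not be detected so simply; I expect to verify it using the explicit product formula for the norm.

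For (\ref{it:index_pp}), $(E_n:E''_n)$ is odd, so $E_n=E''_n\cdot E_n^2$ and squaring-index computations can be done on $E''_n$. A unit $u\in E''_n$ that is a square in $E_{n+1}$ is a square of some $v\in E_{n+1}$. Since $v^{2}\in E''_{n+1}$ and the index is odd, we get $v\in E''_{n+1}$. Thus $E''_n\cap(E''_{n+1})^2=E''_n\cap E_{n+1}^2$ and $E''_n\cap E_n^2=(E''_n)^2$, and the map $E''_n/(E''_n)^2\to E_n/E_n^2$ is an isomorphism carrying one subgroup onto the other.
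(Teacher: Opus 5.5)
Your proposal follows the paper's proof in all five parts:
\begin{itemize}
\item (1): Kummer theory for $k_{n+1}=k_n(\sqrt{\beta_n})$.
\item (2): parity of the valuation of $\beta_n$ at a prime above $2$ that is unramified in $k_n/\B_n$.
\item (3): cyclicity of $k_{n'}/k_n$.
\item (4): pushing a witness down with $N_{n',n}$.
\item (5): the odd index of $E''_n$ in $E_n$.
\end{itemize}
Parts (1), (2), (3) and (5) are fine as written. Two small remarks. In (3) the minimal-$j$ detour is unnecessary: $k_n(\sqrt u)$ is already a quadratic subextension of the cyclic extension $k_{n'}/k_n$, hence equals $k_{n+1}$. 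In (2) the ``$/\ldots$'' is spurious, since $\beta_n=(\zeta_{2^{n+3}}+\zeta_{2^{n+3}}^{-1})^2$ exactly.

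The one place where the proposal is not yet a proof is (4), and the ``main obstacle'' you name there is not real. The norm is a homomorphism $k_{n'}^\times\to k_n^\times$, so it maps squares to squares. From $u'\beta_{n'}=x^2$ with $x\in k_{n'}$ you get $N_{n',n}(u')\,N_{n',n}(\beta_{n'})=N_{n',n}(x)^2$. All that remains is $N_{n',n}(\beta_{n'})\equiv\beta_n$ modulo $(k_n^\times)^2$.

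The paper normalizes $\beta_n=2-\zeta_{2^{n+2}}-\zeta_{2^{n+2}}^{-1}$. For this choice the identity holds exactly:
$N_{n+1,n}(\beta_{n+1})=(2-\zeta_{2^{n+3}}-\zeta_{2^{n+3}}^{-1})(2+\zeta_{2^{n+3}}+\zeta_{2^{n+3}}^{-1})=\beta_n$.
With your normalization it holds modulo squares. The product of the two normalizations at level $n$ is $2-\zeta_{2^{n+1}}-\zeta_{2^{n+1}}^{-1}=(2\sin(\pi/2^{n+1}))^2$, which is a square in $\B_n$.

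Hence $N_{n',n}(u')\beta_n\in(k_n^\times)^2$. So the unit $N_{n',n}(u')$ is a square in $k_n(\sqrt{\beta_n})=k_{n+1}$. It is not a square in $k_n$, since otherwise $\beta_n$ would be. This completes (4); no explicit product formula or finer analysis of square classes is needed.
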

\begin{proof}
  Let $n<n'$ and let $u\in E_n\cap E_{n'}^2\smallsetminus E_n^2$.
  Then $k_n(\sqrt u)$ is a quadratic extension of $k_n$
  contained in $k_{n'}$,
  which must be $k_{n+1}$.
  Hence $u\in E_{n+1}^2$, and
  such a unit $u$ is unique up to $E_n^2$ from the Kummer theory.
  Thus we have (\labelcref{it:index_12}) and (\labelcref{it:index_jump}).

  It is well-known that $k_{n+1}=k_n(\sqrt{\beta_n})$,
  where $\beta_n=2-\zeta_{2^{n+2}}-\zeta_{2^{n+2}}^{-1}$.
  Hence from the Kummer theory,
  we have $\sqrt{\beta_nu}\in k_n$.
  The ideal $(\beta_n)$
  is the unique prime ideal of $\B_n$ above $2$.
  Take a prime ideal $\mathfrak{p}$ of $k_n$ above $(\beta_n)$.
  Since $u$ is a unit and $\sqrt{\beta_nu}\in k_n$,
  the $\mathfrak{p}$-adic valuation of $\beta_n$ must be even.
  If $m\equiv1\pmod4$, this contradicts that $k_n/\B_n$ is unramified above $2$.
  This yields (\labelcref{it:index_mod}).

  Let $n<n'$ and let $u\in E_{n'}\cap E_{n'+1}^2\smallsetminus E_{n'}^2$.
  Then we have $\sqrt{\beta_{n'}u}\in k_{n'}$.
  Since $N_{n',n}(\beta_{n'})=\beta_n$,
  we have $\sqrt{\beta_nN_{n',n}(u)}\in k_n$,
  which yields (\labelcref{it:index_dec}).

  Finally, it is easy to show that
  \[
    (E''_n\cap(E''_{n+1})^2)\big/(E''_n)^2\simeq(E_n\cap E_{n+1}^2)\big/E_n^2
  \]
  via the natural map.
  Hence we have (\labelcref{it:index_pp}).
\end{proof}

\section{Iterative extensions of unit groups}
\label{sect:iter}

Our goal is to construct $E''_n$ by
iteratively extending $V_n$:
\[
  V_n=V_n^{(0)}\subseteq V_n^{(1)}\subseteq\dots\subseteq V_n^{(a_n-a_{n-1})}=E''_n,
\]
with $\gind{V_n^{(j)}}{V_n^{(j-1)}}=2$ for $1\le j\le a_n-a_{n-1}$.
In this iteration, we will also assume
that each $V_n^{(j)}$ is $\gamma$-invariant.
It is possible from the following lemma.
\begin{lemma}
  \label{lem:key_gam}
  Let $V$ be a subgroup of $E_n$ containing $-1$
  and satisfying $V^\gamma=V$.
  Assume that the index $\gind{E_n}{V}$ is even.
  Then there exists
  $v\in V\cap E_n^2$ satisfying $v^{1+\gamma}\in V^2$ and $v\notin V^2$.
  For this $v$, we have $\braket{V,\sqrt v}^\gamma=\braket{V,\sqrt v}$.
\end{lemma}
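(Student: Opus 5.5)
The plan is to work in the $\F_2$-vector space
\[
  W=(V\cap E_n^2)/V^2
\]
with its induced action of $\gamma$. I will show that $W\neq0$, find a nonzero $\gamma$-fixed vector in $W$ by a unipotence argument, and then check that a representative of this vector has the required properties.

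\emph{Step 1: $W$ is a nonzero $\F_2[\gamma]$-module.} Since $V^\gamma=V$ and $E_n^\gamma=E_n$, the subgroups $V\cap E_n^2$ and $V^2$ are $\gamma$-stable, so $\gamma$ acts on $W$. Every element of $W$ is killed by $2$, because $v^2\in V^2$ for $v\in V$. Hence $W$ is an $\F_2$-vector space. It is finite, since $E_n$ is finitely generated and $V^2\subseteq V\cap E_n^2\subseteq V$.

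To see $W\neq0$, use that $\gind{E_n}{V}$ is even. Then $E_n/V$ has an element of order $2$, i.e.\ some $u\in E_n\smallsetminus V$ with $u^2\in V$. Put $v_0=u^2\in V\cap E_n^2$. If $v_0=w^2$ with $w\in V$, then $u=\pm w$. Since $-1\in V$, this gives $u\in V$, a contradiction. So $v_0\notin V^2$, and the class of $v_0$ is nonzero in $W$.

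\emph{Step 2: a nonzero $\gamma$-fixed vector.} The action of $\gamma$ on $E_n$ factors through $\Gal(k_n/k)$, which is cyclic of order $2^n$. Hence $\gamma^{2^n}$ acts trivially on $W$. In characteristic $2$ this gives
\[
  (\gamma-1)^{2^n}=\gamma^{2^n}-1=0
\]
as endomorphisms of $W$, so $\gamma-1$ is nilpotent on $W$. A nilpotent endomorphism of a nonzero space has nonzero kernel. Therefore there is $v\in V\cap E_n^2$ with $v\notin V^2$ and $v^{\gamma-1}\in V^2$.

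\emph{Step 3: the required properties.} We have
\[
  v^{1+\gamma}=v^2\cdot v^{\gamma-1}\in V^2 .
\]
For the invariance, let $\sqrt v\in E_n$ be a square root of $v$. Write $v^\gamma=v w^2$ with $w\in V$. Then
\[
  \bigl((\sqrt v)^\gamma\bigr)^2=v^\gamma=(\sqrt v\,w)^2,
\]
so $(\sqrt v)^\gamma=\pm\sqrt v\,w$. This lies in $\braket{V,\sqrt v}$ because $-1\in V$. Together with $V^\gamma=V$, this gives $\braket{V,\sqrt v}^\gamma\subseteq\braket{V,\sqrt v}$. Applying the same argument to the automorphism $\gamma^{-1}=\gamma^{2^n-1}$ on $k_n$ (or comparing indices in $E_n$) gives equality.

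The only step with real content is Step 2: finding a $\gamma$-fixed class, which rests on $\gamma$ having $2$-power order on $k_n$. The remaining steps are bookkeeping, in which the hypothesis $-1\in V$ is used twice to absorb the sign ambiguity of square roots.
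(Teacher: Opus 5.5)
Your proposal is correct and follows the paper's proof essentially step for step. It uses the same module $(V\cap E_n^2)/V^2$, the same nontriviality argument via $u\in E_n\smallsetminus V$ with $u^2\in V$ and $-1\in V$, and the same sign-absorption argument for $\braket{V,\sqrt v}^\gamma=\braket{V,\sqrt v}$. The one difference is that you justify the nonzero $\gamma$-fixed class explicitly via $(\gamma-1)^{2^n}=\gamma^{2^n}-1=0$ over $\F_2$, whereas the paper simply invokes the standard fixed-point fact for a $2$-group acting on a nontrivial finite $2$-group.
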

\begin{proof}
  Take $u\in E_n\smallsetminus V$ with $u^2\in V$.
  If $u^2\in V^2$, then we have $u\in V$ from $-1\in V$,
  which is absurd.
  Therefore $(V\cap E_n^2)/V^2$ is a non-trivial
  $\Gal(k_n/k)$-module of order $2$-power.
  Hence there exists $v\bmod V^2\in(V\cap E_n^2)/V^2$
  which is non-trivial and $\gamma$-invariant.
  This means $v^{1-\gamma}\in V^2$ and hence $v^{1+\gamma}\in V^2$.
  Also we obtain
  $\sqrt v^{1-\gamma}=\pm\sqrt{v^{1-\gamma}}\in V$,
  which shows
  $\braket{V,\sqrt v}^\gamma=\braket{V,\sqrt v}$.
\end{proof}

Also, each $V_n^{(j)}$ is automatically $\tau$-invariant.
\begin{lemma}
  \label{lem:key_tau}
  Let $V$ be a subgroup of $E_n$ containing $c_n$
  and satisfying $V^\gamma=V^\tau=V$.
  Then for any $v\in V\cap E_n^2$, we have $v^{1+\tau}\in V^2$,
  and also $\braket{V,\sqrt v}^\tau=\braket{V,\sqrt v}$.
\end{lemma}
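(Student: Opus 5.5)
Write $v=w^2$ with $w\in E_n$. The plan is to show that $w^{1+\tau}$ lies in $V$ up to sign, using only \cref{prop:index_tau} and the fact that $V$ contains enough of $E_{\B_n}$.

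\textbf{Reduction to $E_{\B_n}\cap V$.} Since $V^\tau=V$, we have $v^{1+\tau}=v\cdot v^\tau\in V$. Moreover $v^{1+\tau}=N_{k_n/\B_n}(v)$ lies in $E_{\B_n}$. On the other hand $v^{1+\tau}=(w^{1+\tau})^2$ with $w^{1+\tau}\in E_{\B_n}$. So $v^{1+\tau}\in E_{\B_n}^2\cap V$, and the task is to show that $w^{1+\tau}\in V$, possibly up to $\pm1$, so that $v^{1+\tau}\in V^2$.

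\textbf{Generating $E_{\B_n}$.} The key input is that the $\gamma$-orbit of $c_n$, together with $-1$, generates a subgroup $\braket{-1,c_n^{\gamma^i}}$ of $E_{\B_n}$ of odd index. This is Weber's classical result that $h_{\B_n}$ is odd, combined with Sinnott's index formula for $\B_n$, which gives that the index of the cyclotomic units equals the class number. Since $V\ni c_n$ and $V^\gamma=V$, $V$ contains all $c_n^{\gamma^i}$.

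I also need $-1\in V$. If it is not available, I will work up to sign: replacing $V$ by $\braket{V,-1}$ changes $V^2$ only through $(-1)^2=1$, so $V^2$ is unaffected. Also $\sqrt{-1}\notin k_n$, so $-1$ never appears as a $v$ in any harmful way.

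Hence $E_{\B_n}\cap V\supseteq C$, where $C=\braket{-1,c_n^{\gamma^i}}$ and $(E_{\B_n}:C)$ is odd. Now take $x=w^{1+\tau}\in E_{\B_n}$ with $x^2\in V$. Write $t=(E_{\B_n}:C)$, which is odd; then $x^t\in C\subseteq V$. Since also $x^2\in V$ and $\gcd(2,t)=1$, it follows that $x\in V$. Therefore $v^{1+\tau}=x^2\in V^2$.

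\textbf{$\tau$-invariance of $\braket{V,\sqrt v}$.} Take $\sqrt v=w$. Then $w^\tau=w^{1+\tau}\cdot w^{-1}=x\,w^{-1}\in\braket{V,w}$. Together with $V^\tau=V$, this gives $\braket{V,\sqrt v}^\tau\subseteq\braket{V,\sqrt v}$, and equality follows since $\tau$ is an involution.

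\textbf{Main obstacle.} The main point to check is the odd-index fact for the cyclotomic units of $\B_n$; I will cite Weber's theorem together with Sinnott's formula for the real abelian field $\B_n$. The sign issue concerning $-1$ is the only other subtlety, and it is handled as above.
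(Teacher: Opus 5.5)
Your argument is the paper's: $V$ contains the cyclotomic units of $\B_n$, whose index $h_{\B_n}$ in $E_{\B_n}$ is odd. So $x=w^{1+\tau}$ lies in $V$ because both $x^2$ and $x^{h_{\B_n}}$ do, and then $w^\tau=xw^{-1}$ gives the $\tau$-stability of $\braket{V,\sqrt v}$.

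The one thing to repair is how you handle $-1$. No workaround is needed, because $-1\in V$ follows from the hypotheses. $V$ is $\gamma$-stable and contains $c_n$, so it contains
\[
c_n^{1+\gamma+\dots+\gamma^{2^n-1}}=N_{n,0}(c_n)=-c_0=-1
\]
by \cref{prop:norm_gam}, since $c_0=1$. The same relations $c_n^{1+\gamma^{2^{n-1}}}=-c_{n-1}$ are what give the paper $V\supseteq\braket{-1}\times C_1\times\dots\times C_n$.

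Your substitute, enlarging $V$ to $\braket{V,-1}$, is fine for the first assertion, since $\braket{V,-1}^2=V^2$. For the second assertion it is not enough: it only yields $x\in\braket{V,-1}$, hence $w^\tau\in\braket{V,-1,w}$. That group need not equal $\braket{V,w}$ when $-1\notin V$; in that situation $\braket{V,\sqrt v}$ even depends on the sign chosen for $\sqrt v$. Once you derive $-1\in V$ as above, your proof is complete.
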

\begin{proof}
  Put $u=\sqrt v^{1+\tau}\in E_{\B_n}$.
  The assumptions $c_n\in V$ and $V^\gamma=V$
  yield that $V$ contains
  \[
    C=\braket{-1}\times C_1\times\dots\times C_n,
  \]
  the cyclotomic unit group of $\B_n$.
  From \cite[Theorem~8.2]{washington_gtm_1997},
  we know $\gind{E_{\B_n}}{C}=h_{\B_n}$.
  Hence we have $u^{h_{\B_n}}\in C\subseteq V$.
  Also $u^2=v^{1+\tau}\in V$
  since $V^\tau=V$.
  It is well-known that $h_{\B_n}$ is odd,
  hence $u\in V$, which means
  $v^{1+\tau}\in V^2$.
  Also we have
  $\braket{V,\sqrt v}^\tau=\braket{V,\sqrt v}$
  similarly as the previous lemma.
\end{proof}

Now we formulate the process of extending $V_n$ as follows.
\begin{definition}
  \label{def:iter}
  For $n\ge0$, we define
  \begin{itemize}
    \item $v_n^{(j)}\quad(1\le j\le a_n-a_{n-1})$ : an element of $E''_n$,
    \item $V_n^{(j)}\quad(0\le j\le a_n-a_{n-1})$ : a subgroup of $E''_n$, and
    \item $B_n^{(j)}\quad(0\le j\le a_n-a_{n-1})$ : a basis of $V_n^{(j)}$
  \end{itemize}
  as follows (for convenience we let $a_{-1}=0$).

  Put
  $V_0^{(0)}=E_0=\braket{-1,\varepsilon}$ and
  $B_0^{(0)}=(-1,\varepsilon)$,
  where $\varepsilon>1$ is the fundamental unit of $k$.
  For $n\ge1$, iteratively on $n$:
  \begin{enumerate}
    \item Put
    \(
      V_n^{(0)}=V_n=E''_{n-1}\times C_n\times H_n\subseteq E_n
    \).
    \item Let $B_n^{(0)}$ be the concatenation of $B_{n-1}^{(a_{n-1}-a_{n-2})}$ and
    \[
      (c_n,c_n^\gamma,c_n^{\gamma^2},\dots,c_n^{\gamma^{2^{n-1}-1}},\eta_n,\eta_n^\gamma,\eta_n^{\gamma^2},\dots,\eta_n^{\gamma^{2^{n-1}-1}}).
    \]
    \item While $V_n^{(j-1)}\subsetneq E''_n$, iteratively on $j$:
    \begin{enumerate}
      \item Let $(b_0,b_1,\dots,b_{2^{n+1}-1})=B_n^{(j-1)}$.
      \item Define $v_n^{(j)}$ as $v\in V_n^{(j-1)}$ satisfying
      \label{it:vnj}
      \begin{enumerate}
        \item $v=b_{i_1}b_{i_2}\dotsm b_{i_t}$ with $0\le i_1<\dots<i_t\le2^{n+1}-1$ and $t\ge1$,
        \label{it:vnj_prod}
        \item $v\in(E''_n)^2$,
        \label{it:vnj_sq}
        \item $v^{1+\gamma}\in(V_n^{(j-1)})^2$,
        \label{it:vnj_gam}
        \item $i_t$ is minimum among those satisfying the above conditions.
        \label{it:vnj_min}
      \end{enumerate}
      \item Put $V_n^{(j)}=\Braket{V_n^{(j-1)},\sqrt{v_n^{(j)}}}$.
      \item Put $B_n^{(j)}=\Bigl(b_0,\dots,b_{i_t-1},\sqrt{v_n^{(j)}},b_{i_t+1},\dots,b_{2^{n+1}-1}\Bigr)$.
    \end{enumerate}
  \end{enumerate}
\end{definition}

Remark that
on (\labelcref{it:vnj}), such $v$ exists from \cref{lem:key_gam},
and it is unique since
if another sequence $i'_1<\dots<i'_{t'}=i_t$ satisfies those conditions
then their symmetric difference also satisfies the conditions,
which contradicts the minimality of $i_t$.
Also remark that
from \cref{lem:key_gam,lem:key_tau},
$V_n^{(j)}$ is $\gamma$-invariant and $\tau$-invariant,
and from \cref{cor:sinnott}, we have $V_n^{(j)}=E''_n$ when $j=a_n-a_{n-1}$.

\begin{example}
  Let $m=627=3\cdot11\cdot19$.
  We find $a_0=2$ by standard calculation.

  First we put $B_1^{(0)}=(-1,\varepsilon,c_1,\eta_1)$.
  Only $\varepsilon$, $\eta_1$, and $\varepsilon\eta_1$ satisfy
  conditions (\labelcref{it:vnj_prod})--(\labelcref{it:vnj_gam})
  of \cref{def:iter}
  (the determination method will be given in \cref{sect:sq}).
  Among them, $\varepsilon$ satisfies condition (\labelcref{it:vnj_min}).
  Hence we put $v_1^{(1)}=\varepsilon$ and
  $B_1^{(1)}=\Bigl(-1,\sqrt{v_1^{(1)}},c_1,\eta_1\Bigr)$.
  For this basis, only $\eta_1$ satisfies
  conditions (\labelcref{it:vnj_prod})--(\labelcref{it:vnj_gam}).
  Hence we put $v_1^{(2)}=\eta_1$ and
  $B_1^{(2)}=\Bigl(-1,\sqrt{v_1^{(1)}},c_1,\sqrt{v_1^{(2)}}\Bigr)$.
  Now there is not a unit satisfying
  conditions (\labelcref{it:vnj_prod})--(\labelcref{it:vnj_gam})
  for this basis, hence
  we have $V_1^{(2)}=E''_1$ and $a_1-a_0=2$.

  Next we put
  \[
    B_2^{(0)}=\Bigl(-1,\sqrt{v_1^{(1)}},c_1,\sqrt{v_1^{(2)}},c_2,c_2^\gamma,\eta_2,\eta_2^\gamma\Bigr),
  \]
  and continue the iteration in the similar manner:
  \begin{align}
    v_2^{(1)}&=\sqrt{v_1^{(2)}},\\
    B_2^{(1)}&=\Bigl(-1,\sqrt{v_1^{(1)}},c_1,\sqrt{v_2^{(1)}},c_2,c_2^\gamma,\eta_2,\eta_2^\gamma\Bigr),\\
    v_2^{(2)}&=-\sqrt{v_1^{(1)}}c_1\eta_2^{1+\gamma},\\
    B_2^{(2)}&=\Bigl(-1,\sqrt{v_1^{(1)}},c_1,\sqrt{v_2^{(1)}},c_2,c_2^\gamma,\eta_2,\sqrt{v_2^{(2)}}\Bigr),\\
    v_3^{(1)}&=\sqrt{v_1^{(1)}}c_1\sqrt{v_2^{(1)}}c_2^{1+\gamma}\eta_2,\\
    B_3^{(1)}&=\Bigl(-1,\sqrt{v_1^{(1)}},c_1,\sqrt{v_2^{(1)}},c_2,c_2^\gamma,\sqrt{v_3^{(1)}},\sqrt{v_2^{(2)}},c_3,\dots,c_3^{\gamma^3},\eta_3,\dots,\eta_3^{\gamma^3}\Bigr),\\
    v_3^{(2)}&=c_2^{1+\gamma}\sqrt{v_2^{(2)}}\eta_3^{1+\gamma+\gamma^2+\gamma^3},\\
    B_3^{(2)}&=\Bigl(-1,\sqrt{v_1^{(1)}},c_1,\sqrt{v_2^{(1)}},c_2,c_2^\gamma,\sqrt{v_3^{(1)}},\sqrt{v_2^{(2)}},\\&\qquad c_3,\dots,c_3^{\gamma^3},\eta_3,\dots,\eta_3^{\gamma^2},\sqrt{v_3^{(2)}}\Bigr).
  \end{align}
  The iteration terminates here and cannot be continued in the fourth layer.
  Therefore we obtain $(a_0,a_1,\dots,a_4)=(2,4,6,8,8)$.
\end{example}

\section{Fundamental properties of $V_n^{(j)}$}
\label{sect:fund}

Since $V_n^{(j)}$ and $v_n^{(j)}$ exhibit somewhat patterned behavior,
we will explain it here.
Even if a unit of $k_n$ is square in $k_{n+1}$,
it is not necessarily square in $k_n$;
hence we define slightly shifted versions of $V_n^{(j)}$, $v_n^{(j)}$ and $E''_n$.
\begin{definition}
  For $n\ge0$, we put
  \[
    \delta_n=\gind{E''_n\cap(E''_{n+1})^2}{E''_n}-1=\begin{cases}
      0&(E''_n\cap(E''_{n+1})^2=(E''_n)^2),\\
      1&(E''_n\cap(E''_{n+1})^2\supsetneq(E''_n)^2).
    \end{cases}
  \]
  (See \cref{prop:index_gam}.)
\end{definition}

\begin{lemma}
  \label{lem:vn11}
  If $\delta_n=1$, then
  \begin{enumerate}
    \item $v_{n+1}^{(1)}\in E''_n\cap(E''_{n+1})^2\smallsetminus(E''_n)^2$,
    \item $(v_{n+1}^{(1)})^{1+\gamma}\in(E''_n)^2$.
  \end{enumerate}
\end{lemma}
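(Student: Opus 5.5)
The plan is to identify the unique nontrivial class of $(E''_n\cap(E''_{n+1})^2)/(E''_n)^2$ and show that the minimality rule in \cref{def:iter} picks it out as $v_{n+1}^{(1)}$. (The typo in the definition of $\delta_n$ is read as the index over $(E''_n)^2$, as the cases indicate.)

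\textbf{Step 1: the candidate class.} Since $\delta_n=1$, \cref{prop:index_gam}(\labelcref{it:index_12}),(\labelcref{it:index_pp}) give a unit $u\in E''_n\cap(E''_{n+1})^2\smallsetminus(E''_n)^2$, unique modulo $(E''_n)^2$. Both $E''_n$ and $E''_{n+1}$ are Galois stable, so $u^\gamma$ lies in the same group and is also not a square in $E''_n$. By uniqueness, $u^\gamma\equiv u\pmod{(E''_n)^2}$, hence $u^{1+\gamma}\equiv u^2\equiv1$. Thus every element of the nontrivial class satisfies (2).

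\textbf{Step 2: $u$ can be written as a basis product.} The basis $B_{n+1}^{(0)}$ consists of $B_n^{(a_n-a_{n-1})}$, which is a basis of $E''_n$ (including $-1$), followed by the $c_{n+1}$ and $\eta_{n+1}$ conjugates. Because $B_n^{(a_n-a_{n-1})}$ is a basis of $E''_n$, some product $b_{i_1}\dotsm b_{i_t}$ of distinct elements from this first block represents the class of $u$ modulo $(E''_n)^2$. Its largest index $i_t$ then lies inside the first block, so $i_t<2^{n+1}$.

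Conditions (\labelcref{it:vnj_sq}) and (\labelcref{it:vnj_gam}) hold for this product. For (\labelcref{it:vnj_sq}), it lies in $(E''_{n+1})^2$. For (\labelcref{it:vnj_gam}), its $(1+\gamma)$-th power lies in $(E''_n)^2\subseteq(V_{n+1}^{(0)})^2$.

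\textbf{Step 3: the main obstacle is the converse.} Let $v$ be any product satisfying (\labelcref{it:vnj_prod})--(\labelcref{it:vnj_gam}) whose largest index lies in the first block. Then $v\in E''_n\cap(E''_{n+1})^2$, and $v\notin(E''_n)^2$ because a nonempty product of distinct elements of a basis of $E''_n$ is not a square there. So $v$ lies in the nontrivial class, and by uniqueness of the class it equals the product from Step 2.

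Any candidate satisfying the conditions with $i_t$ in the first block therefore has $i_t<2^{n+1}$. The minimality in (\labelcref{it:vnj_min}) rules out candidates whose largest index reaches into the $c_{n+1},\eta_{n+1}$ block, since the Step 2 product has strictly smaller $i_t$. Hence the minimal candidate is the product from Step 2, which is $v_{n+1}^{(1)}$.

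This gives (1) by Step 3 and (2) by Step 1. The delicate point is confirming that "nonempty product of distinct basis elements of $E''_n$ is a nonsquare." This holds because $E''_n$ is generated by the basis with only $-1$ torsion, so $E''_n/(E''_n)^2$ is the $\F_2$-span of the basis with the basis elements independent.
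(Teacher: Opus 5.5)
Your proposal is correct and follows essentially the same route as the paper. You take the product of first-block basis elements of $B_{n+1}^{(0)}$ representing the unique nontrivial class of $(E''_n\cap(E''_{n+1})^2)/(E''_n)^2$, check conditions (\labelcref{it:vnj_prod})--(\labelcref{it:vnj_gam}), use minimality to force $v_{n+1}^{(1)}$ into the first block, and identify the two via uniqueness of the class and independence of the basis modulo squares. The only difference is that you get $(v_{n+1}^{(1)})^{1+\gamma}\in(E''_n)^2$ from uniqueness of the class plus $\gamma$-stability of $E''_n$ and $E''_{n+1}$, whereas the paper uses Kummer theory via $k_n(\sqrt v)=k_n(\sqrt{v^\gamma})=k_{n+1}$; these amount to the same fact.
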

\begin{proof}
  Let $(b_0,b_1,\dots,b_{2^{n+2}-1})=B_{n+1}^{(0)}$.
  There exists
  \[
    v=b_{i_1}b_{i_2}\dotsm b_{i_t}\in E''_n\cap(E''_{n+1})^2\smallsetminus(E''_n)^2
  \]
  with $0\le i_1<\dots<i_t\le2^{n+1}-1$ and $t\ge1$.
  It is easy to see that $\sqrt v\notin E_n$.
  Hence $k_n(\sqrt v)=k_{n+1}$,
  and also $k_n(\sqrt{v^\gamma})=k_{n+1}$.
  From the Kummer theory we have $v^{1+\gamma}\in E_n^2$,
  hence $v^{1+\gamma}\in(E''_n)^2\subseteq(V_{n+1}^{(0)})^2$.
  Now we have shown that $v$ satisfies the conditions (\labelcref{it:vnj_prod})--(\labelcref{it:vnj_gam})
  in \cref{def:iter}.
  Write $v_{n+1}^{(1)}=b_{i'_1}\dotsm b_{i'_{t'}}$
  with $i'_1<\dots<i'_{t'}$
  satisfying the conditions (\labelcref{it:vnj_prod})--(\labelcref{it:vnj_min}).
  From the condition (\labelcref{it:vnj_min}), we have $i'_{t'}\le i_t\le2^{n+1}-1$.
  Hence $v_{n+1}^{(1)}\in E''_n\smallsetminus(E''_n)^2$,
  and $v_{n+1}^{(1)}\in(E''_{n+1})^2$ from the condition (\labelcref{it:vnj_sq}).
  Therefore $v$ equals to $v_{n+1}^{(1)}$ up to $(E''_n)^2$ from \cref{prop:index_gam},
  hence $v=v_{n+1}^{(1)}$.
\end{proof}

\begin{definition}
  Let $n\ge0$.
  \begin{enumerate}
    \item Let $\at{n}=a_n+\delta_n$.
    \item Let
    \[
      \Et{n}=\begin{cases}
        E''_n&(\delta_n=0),\\
        \Braket{E''_n,\sqrt{v_{n+1}^{(1)}}}&(\delta_n=1).
      \end{cases}
    \]
    \item Let $\Vt{n}{j}=V_n^{(j+\delta_{n-1})}$ for $0\le j\le a_n-\at{n-1}$,
    and $\Vt{n}{\at{n}-\at{n-1}}=\Et{n}$ if $\delta_n=1$.
    \item Let $\vt{n}{j}=v_n^{(j+\delta_{n-1})}$ for $1\le j\le a_n-\at{n-1}$,
    and $\vt{n}{\at{n}-\at{n-1}}=v_{n+1}^{(1)}$ if $\delta_n=1$.
  \end{enumerate}
  Here we let $\delta_{-1}=\at{-1}=0$ for convenience.
\end{definition}

For $\Et{n}$,
the unit index analogous to \cref{prop:index_gam}
is always $1$.
\begin{lemma}
  \label{lem:et_index}
  $\Et{n}\cap(\Et{n+1})^2=(\Et{n})^2$ for $n\ge0$.
\end{lemma}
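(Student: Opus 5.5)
The plan is to use the fact that $\Et{n}$ is always contained in $E''_{n+1}$, and to split according to $\delta_n$. Take $u\in\Et{n}$ with $u=x^2$ for some $x\in\Et{n+1}$. Since $\Et{n+1}\subseteq E_{n+2}$, we have $u\in E_{n+2}^2$. Throughout, I will use the following observation. If $y\in E_\ell$ and $y^2\in E''_\ell$, then $y\in E''_\ell$, because $\gind{E_\ell}{E''_\ell}$ is odd.

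\emph{Case $\delta_n=0$.} Here $\Et{n}=E''_n$, so $u\in E_n\cap E_{n+2}^2$. By \cref{prop:index_gam}~(\labelcref{it:index_pp}), $\delta_n=0$ gives $E_n\cap E_{n+1}^2=E_n^2$. Then \cref{prop:index_gam}~(\labelcref{it:index_jump}) gives $E_n\cap E_{n+2}^2=E_n^2$. So $u=y^2$ with $y\in E_n$, and the observation gives $y\in E''_n$. Hence $u\in(\Et{n})^2$.

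\emph{Case $\delta_n=1$.} Put $w=v_{n+1}^{(1)}$. By \cref{lem:vn11}, $w\in E''_n\smallsetminus(E''_n)^2$ and $\sqrt w\in E_{n+1}\smallsetminus E_n$. Every element of $\Et{n}$ can be written as $e\sqrt w^{\,a}$ with $e\in E''_n$ and $a\in\{0,1\}$. Let $\sigma=\gamma^{2^n}$ generate $\Gal(k_{n+1}/k_n)$; then $\sqrt w^{\,\sigma}=-\sqrt w$. I first treat the subcase $x\in E_{n+1}$. Since $\sigma(u)=\pm u$ and $k_{n+1}$ is real, we get $\sigma(x)=\pm x$.
\begin{itemize}
\item If $\sigma(x)=x$, then $x\in E_n$, so $\sqrt w^{\,a}=x^2e^{-1}\in k_n$. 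This forces $a=0$, so $u=e\in E''_n$, and the observation gives $x\in E''_n\subseteq\Et{n}$.
\item If $\sigma(x)=-x$, then $y=x/\sqrt w$ is $\sigma$-fixed, so $y\in E_n$. Also $y^2=u/w\in E''_n$ (again forcing $a=0$), so $y\in E''_n$ and $x=y\sqrt w\in\Et{n}$.
\end{itemize}
In both cases $u\in(\Et{n})^2$.

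The remaining subcase, $x\notin E_{n+1}$, is the main obstacle, and I expect to rule it out by the norm argument from the proof of \cref{prop:index_gam}~(\labelcref{it:index_dec}). Suppose $u\in E_{n+1}\cap E_{n+2}^2\smallsetminus E_{n+1}^2$. Kummer theory gives $\sqrt{\beta_{n+1}u}\in k_{n+1}$, and taking $N_{n+1,n}$ gives $\sqrt{\beta_nN_{n+1,n}(u)}\in k_n$. Now $N_{n+1,n}(\sqrt w)=-w$, so
\[
  N_{n+1,n}(u)=e^2(-w)^a .
\]
Since $w$ becomes a square in $k_{n+1}$ but is not a square in $k_n$, the same Kummer argument gives $\beta_nw\in k_n^2$. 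This leads to a contradiction in both cases.
\begin{itemize}
\item If $a=0$, then $\beta_n$ would be a square in $k_n$, contradicting $k_n(\sqrt{\beta_n})=k_{n+1}\ne k_n$.
\item If $a=1$, then $-\beta_nwe^2=-(\beta_nw)e^2$ would be a square in the real field $k_n$. But it is negative, since $\beta_nw$ is a nonzero square.
\end{itemize}
Hence this subcase cannot occur, and the lemma follows.
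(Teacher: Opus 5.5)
Your proof is correct, but it is organized differently from the paper's.

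\textbf{The case $\delta_n=0$.} The paper uses \cref{prop:index_gam}~(\labelcref{it:index_dec}) to get $\delta_{n+1}=0$. Then $\Et{n+1}=E''_{n+1}$, and the claim is just the definition of $\delta_n=0$. You instead pass to the full unit groups via (\labelcref{it:index_pp}) and (\labelcref{it:index_jump}), and come back with the odd-index observation. This works equally well.

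\textbf{The case $\delta_n=1$.} The paper removes the $\sqrt w$-component in one step. Writing $u=e\sqrt w^{\,a}$, the norm $N_{n+1,n}(u)=e^2(-w)^a$ must be positive, because $u$ is totally positive and $w>0$; hence $a=0$. Then $u\in E''_n$, and $k_n(\sqrt u)\subseteq k_{n+2}$ must be $k_n$ or $k_{n+1}=k_n(\sqrt w)$. Kummer theory then makes $u$ or $uw$ a square in $E''_n$.

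You instead split according to whether the root $x$ lies in $E_{n+1}$. You handle $x\in E_{n+1}$ by Galois descent along $\sigma$, using that $k_{n+1}$ is real. You rule out $x\notin E_{n+1}$ by importing the $\beta_{n+1}\mapsto\beta_n$ norm argument from the proof of (\labelcref{it:index_dec}), together with a sign argument.

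\textbf{Comparison.} Your version exhibits the root explicitly as $y$ or $y\sqrt w$ with $y\in E''_n$. However, it uses positivity twice and needs an extra subcase. The paper's norm-sign observation would let you drop that subcase. Once $a=0$, you have $u\in E_n\cap E_{n+2}^2=E_n\cap E_{n+1}^2$ by (\labelcref{it:index_jump}), exactly as in your $\delta_n=0$ reasoning. This already forces $x\in E_{n+1}$.
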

\begin{proof}
  If $\delta_n=0$,
  then $\delta_{n+1}=0$ from \cref{prop:index_gam}~(\labelcref{it:index_dec}),
  hence the claim holds by definitions.

  Assume $\delta_n=1$.
  Let $v\in\Et{n}\cap(\Et{n+1})^2$.
  We can write
  \(
    v=u\sqrt{v_{n+1}^{(1)}}^e
  \)
  with some $u\in E''_n$ and $e\in\{0,1\}$.
  Taking the norm we obtain
  \[
    N_{n+1,n}(v)=u^2(-v_{n+1}^{(1)})^e,
  \]
  since $k_n\Bigl(\sqrt{v_{n+1}^{(1)}}\Bigr)=k_{n+1}$ from \cref{lem:vn11}.
  We have $v_{n+1}^{(1)}>0$ and $N_{n+1,n}(v)>0$ since $v$ is totally positive.
  Therefore $e=0$ must hold,
  which yields $v\in E''_n$.
  Since $\Et{n+1}\subseteq k_{n+2}$,
  we have
  $k_n(\sqrt v)\subseteq k_{n+2}$.
  Hence we must have
  $k_n(\sqrt v)=k_n$ or $k_n(\sqrt v)=k_{n+1}=k_n\Bigl(\sqrt{v_{n+1}^{(1)}}\Bigr)$.
  This yields that $v$ or $vv_{n+1}^{(1)}$ is square in $E''_n$,
  hence $v\in(\Et{n})^2$.
\end{proof}

The next lemma shows that
$\Vt{n}{j}$ and $\vt{n}{j}$ satisfy
similar properties to those of $V_n^{(j)}$ and $v_n^{(j)}$.
\begin{lemma}
  \label{lem:vt}
  For $n\ge1$ and $1\le j\le\at{n}-\at{n-1}$,
  we have
  \begin{enumerate}
    \item $a_n\ge\at{n-1}$,
    \label{it:vt_at}
    \item $\Vt{n}{0}=\Et{n-1}\times C_n\times H_n$,
    \label{it:vt_0}
    \item $\vt{n}{j}\in\Vt{n}{j-1}\cap(\Et{n})^2\smallsetminus(\Vt{n}{j-1})^2$,
    \label{it:vt_sq}
    \item $(\vt{n}{j})^{1+\gamma}\in(\Vt{n}{j-1})^2$.
    \label{it:vt_gam}
  \end{enumerate}
\end{lemma}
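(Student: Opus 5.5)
The plan is to unwind the shifted definitions and reduce each item to a property of the original $V_n^{(j)}$, $v_n^{(j)}$ from \cref{def:iter}, combined with \cref{lem:vn11} for the boundary cases.

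For (\labelcref{it:vt_at}): if $\delta_{n-1}=0$ it is just $a_n\ge a_{n-1}$, which follows from $V_n\subseteq E''_n$ and \cref{cor:sinnott}. If $\delta_{n-1}=1$, \cref{lem:vn11} shows that $v_n^{(1)}$ exists. Hence the iteration at layer $n$ performs at least one step, so $a_n-a_{n-1}\ge1$, which is exactly $a_n\ge\at{n-1}$.

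For (\labelcref{it:vt_0}): if $\delta_{n-1}=0$ both sides equal $V_n=E''_{n-1}\times C_n\times H_n$. If $\delta_{n-1}=1$, we have $\Vt{n}{0}=V_n^{(1)}=\braket{V_n,\sqrt{v_n^{(1)}}}$. Since $v_n^{(1)}\in E''_{n-1}$ by \cref{lem:vn11}, this equals $\braket{E''_{n-1},\sqrt{v_n^{(1)}}}\times C_n\times H_n=\Et{n-1}\times C_n\times H_n$. Here one must check that the product stays direct. This follows because $\sqrt{v_n^{(1)}}$ was adjoined with index $2$ to $V_n$, and the index $2$ is accounted for by the $E''_{n-1}$ factor. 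I would phrase the check as a comparison of indices, or simply as equality of generated groups if ``$\times$'' is read as an internal product.

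For (\labelcref{it:vt_sq}) and (\labelcref{it:vt_gam}) in the range $1\le j\le a_n-\at{n-1}$, set $j'=j+\delta_{n-1}$, so that $\vt{n}{j}=v_n^{(j')}$ and $\Vt{n}{j-1}=V_n^{(j'-1)}$. Then \cref{def:iter} gives $v\in V_n^{(j'-1)}$, $v\in(E''_n)^2\subseteq(\Et{n})^2$, and $v^{1+\gamma}\in(V_n^{(j'-1)})^2$. Moreover $v\notin(V_n^{(j'-1)})^2$: otherwise $\sqrt v$ would lie in $V_n^{(j'-1)}$ up to sign, using $-1\in V$, and the index would not grow. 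This contradicts $\gind{V_n^{(j')}}{V_n^{(j'-1)}}=2$, which holds by \cref{cor:sinnott} and the count of steps. The remaining case is $\delta_n=1$ and $j=\at{n}-\at{n-1}$. Here $\Vt{n}{j-1}=V_n^{(a_n-a_{n-1})}=E''_n$ and $\vt{n}{j}=v_{n+1}^{(1)}$. \cref{lem:vn11} gives $v_{n+1}^{(1)}\in E''_n\smallsetminus(E''_n)^2$ and $(v_{n+1}^{(1)})^{1+\gamma}\in(E''_n)^2$. Finally, $v_{n+1}^{(1)}\in(\Et{n})^2$ holds by the very definition of $\Et{n}$.

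The main obstacle is bookkeeping rather than mathematics: keeping the index ranges consistent under the shift by $\delta_{n-1}$, especially when $\delta_{n-1}=\delta_n=1$ and $a_n-a_{n-1}=1$. In that situation the ranges for $j$ collapse and both boundary cases occur. I would handle it by treating the four combinations of $(\delta_{n-1},\delta_n)$ explicitly.
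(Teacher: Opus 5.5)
Your route matches the paper's. You unwind the shift by $\delta_{n-1}$, read (3)--(4) off \cref{def:iter} for $j\le a_n-\at{n-1}$, and in the boundary case $\delta_n=1$ you use \cref{lem:vn11} together with $v_{n+1}^{(1)}\in(\Et{n})^2$. Your boundary case is stated a bit more carefully than the paper's, since it also covers $\delta_{n-1}=0$.

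Your index comparison for (2) is a valid alternative to the paper's remark that $\Et{n-1}\cap(C_n\times H_n)=\{1\}$ because $(\Et{n-1})^2\subseteq E''_{n-1}$. The multiplication map $\Et{n-1}\times(C_n\times H_n)\to V_n^{(1)}$ is injective on the index-$2$ subgroup $E''_{n-1}\times C_n\times H_n$. Its image $V_n$ also has index $2$ in $V_n^{(1)}$, so the kernel is trivial.

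The weak point is (1). Saying that \cref{lem:vn11} ``shows that $v_n^{(1)}$ exists'' is circular. $v_n^{(1)}$ is defined only if the loop of \cref{def:iter} runs at layer $n$, i.e.\ only if $V_n^{(0)}\subsetneq E''_n$, i.e.\ only if $a_n>a_{n-1}$, which is exactly what (1) asserts. \cref{lem:vn11} describes $v_n^{(1)}$ but presupposes that it exists.

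What $\delta_{n-1}=1$ actually gives you is some $v\in E''_{n-1}\cap(E''_n)^2\smallsetminus(E''_{n-1})^2$. You still have to show that $\sqrt v\notin V_n^{(0)}$. The paper does this by directness: if $a_n=a_{n-1}$, then $E''_n=E''_{n-1}\times C_n\times H_n$ by \cref{cor:sinnott}. Hence $(E''_n)^2\cap E''_{n-1}=(E''_{n-1})^2$, contradicting the choice of $v$.

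A smaller point in (3): you justify $v_n^{(j')}\notin(V_n^{(j'-1)})^2$ by the step count, but that count already presupposes that every step has index $2$. It is cleaner to get it from \cref{lem:lin_sq}. \cref{def:iter} requires $v$ to be a nonempty product of distinct basis elements, with $b_0=-1$, and such a product is not a square in $V_n^{(j'-1)}$.

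With these additions your argument is complete.
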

\begin{proof}
  If $\delta_{n-1}=0$,
  these hold by definitions.
  Assume $\delta_{n-1}=1$.

  If $a_n<\at{n-1}$, then $a_n=a_{n-1}$,
  hence $E''_n=E''_{n-1}\times C_n\times H_n$.
  From \cref{lem:vn11},
  we have
  $v_n^{(1)}\in(E''_n)^2$.
  Hence
  \[
    v_n^{(1)}\in(E''_{n-1}\times C_n\times H_n)^2\cap E''_{n-1}=(E''_{n-1})^2,
  \]
  which is a contradiction.
  This shows (\labelcref{it:vt_at}).

  We have
  \begin{align}
    \Vt{n}{0}=V_n^{(1)}&=\Braket{V_n^{(0)},\sqrt{v_n^{(1)}}}\\
    &=\Braket{E''_{n-1},\sqrt{v_n^{(1)}},C_n\times H_n}\\
    &=\braket{\Et{n-1},C_n\times H_n}.
  \end{align}
  Here $\Et{n-1}\cap(C_n\times H_n)=\{1\}$,
  since $(\Et{n-1})^2\subseteq E''_{n-1}$
  from \cref{lem:vn11}.
  Hence we have (\labelcref{it:vt_0}).

  If $j\le a_n-\at{n-1}$,
  (\labelcref{it:vt_sq}) and (\labelcref{it:vt_gam}) are straightforward.
  So we assume $\delta_n=1$ and $j=\at{n}-\at{n-1}=a_n-a_{n-1}$.
  Then $\vt{n}{j}=v_{n+1}^{(1)}$ and $\Vt{n}{j-1}=V_n^{(j)}=E''_n$.
  Therefore we have (\labelcref{it:vt_sq}) and (\labelcref{it:vt_gam}) from \cref{lem:vn11}.
\end{proof}

Next, we introduce the following notation
to ease the proofs of the later theorems.
\begin{definition}
  Let $I$ be the ideal $(1+X)$ of the polynomial ring $\F_2[X]$.
  For $n\ge1$, we define a homomorphism
  \[
    \pi_n\colon\Vt{n}{0}\longrightarrow\F_2[X]/I^{2^{n-1}}
  \]
  to satisfy
  \[
    \pi_n(u\eta_n^{g(\gamma)})=(g\bmod2)\bmod I^{2^{n-1}},
    \label{eq:pi}
  \]
  where $u\in\Et{n-1}\times C_n$ and $g\in\Z[X]$ is a polynomial of
  degree less than $2^{n-1}$.
\end{definition}

\begin{lemma}
  \label{lem:pivf}
  In the above definition,
  \cref{eq:pi} holds
  even without the restriction on the degree of $g$.
  Especially,
  for $v\in\Vt{n}{0}$ and $f\in\Z[X]$,
  we have $\pi_n(v^{f(\gamma)})=f(X)\pi_n(v)$.
\end{lemma}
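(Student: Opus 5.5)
The plan is to reduce everything to the single relation $\eta_n^{\gamma^{2^{n-1}}}\equiv\eta_n \pmod{\Et{n-1}\times C_n}$, taken up to an element of $\Et{n-1}\times C_n$. Since $\Vt{n}{0}=\Et{n-1}\times C_n\times H_n$ by \cref{lem:vt}~(\labelcref{it:vt_0}), and $H_n$ is generated by $\eta_n^{\gamma^i}$ for $0\le i<2^{n-1}$, every element of $\Vt{n}{0}$ can be written uniquely as $u\eta_n^{g(\gamma)}$ with $\deg g<2^{n-1}$. So $\pi_n$ is well defined as a homomorphism.

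For a general $g\in\Z[X]$, I would divide $g$ by $X^{2^{n-1}}-1$ in $\Z[X]$ to get $g=q\cdot(X^{2^{n-1}}-1)+r$ with $\deg r<2^{n-1}$. Over $\F_2$ we have $X^{2^{n-1}}-1=(1+X)^{2^{n-1}}$, which lies in $I^{2^{n-1}}$, so $g\equiv r$ modulo $I^{2^{n-1}}$ after reduction mod $2$. It therefore suffices to show
\[
  \eta_n^{g(\gamma)}\in\Et{n-1}\times C_n\cdot\eta_n^{r(\gamma)},
\]
that is, $\eta_n^{q(\gamma)(\gamma^{2^{n-1}}-1)}\in\Et{n-1}\times C_n$. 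Because $\Et{n-1}\times C_n$ is $\gamma$-stable (by \cref{lem:key_gam} and the definitions, $E''_{n-1}$, $\Et{n-1}$ and $C_n$ are all $\gamma$-invariant), it is enough to prove $\eta_n^{\gamma^{2^{n-1}}-1}\in\Et{n-1}\times C_n$.

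This is the key step. Since $\gamma^{2^{n-1}}$ restricted to $k_n$ has order dividing $2$ and fixes $k\B_{n-1}$... wait, it generates $\Gal(k_n/k_{n-1})$, so $\eta_n^{1+\gamma^{2^{n-1}}}=N_{n,n-1}(\eta_n)=\pm\eta_{n-1}$ by \cref{prop:norm_gam}. This gives
\[
  \eta_n^{\gamma^{2^{n-1}}-1}=\pm\eta_{n-1}\eta_n^{-2}.
\]
The element $\pm\eta_{n-1}$ lies in $E''_{n-1}\subseteq\Et{n-1}$ (note $-1\in E_0$). However, $\eta_n^{-2}$ is not in $\Et{n-1}\times C_n$, so this naive version fails.

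The resolution is that the relation only needs to hold mod $2$. In the image, $\eta_n^{-2}$ corresponds to $-2\equiv 0$, so I would instead verify that $\pi_n$, defined via the unique representation, satisfies $\pi_n(u\eta_n^{g(\gamma)})=\bar g$ for all $g$. Write $g=2h+g_0$ with $g_0$ having coefficients in $\{0,1\}$. Then reduce $\eta_n^{2h(\gamma)}$: its unique representation has an even $H_n$-exponent. Indeed $\eta_n^{2\gamma^{i}}$ for $i\ge 2^{n-1}$ reduces via $\eta_n^{\gamma^{2^{n-1}}}=\pm\eta_{n-1}\eta_n^{-1}$, which keeps the parity structure. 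So inductively on degree I would use the single relation $\eta_n^{\gamma^{2^{n-1}}}\equiv\eta_n^{-1}\equiv\eta_n \pmod{\Et{n-1}\cdot H_n^2}$. Since the reduction map from $H_n$ to $\F_2$-coefficients kills $H_n^2$, this gives $X^{2^{n-1}}\mapsto 1$ modulo $2$, which is consistent with the computation modulo $I^{2^{n-1}}$.

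Once the general form holds, the final claim follows directly: $\pi_n(v^{f(\gamma)})=f(X)\pi_n(v)$, because $u^{f(\gamma)}\in\Et{n-1}\times C_n$ and $\eta_n^{g(\gamma)f(\gamma)}$ is treated by the general formula.

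I expect the main obstacle to be the bookkeeping that the exponent reduction preserves parity while the $\Et{n-1}\times C_n$ component absorbs the factors $\pm\eta_{n-1}$.
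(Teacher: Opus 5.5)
Your argument is correct and is essentially the paper's proof. Both rest on $\eta_n^{1+\gamma^{2^{n-1}}}=N_{n,n-1}(\eta_n)=\pm\eta_{n-1}\in\Et{n-1}$ together with $1\pm X^{2^{n-1}}\equiv(1+X)^{2^{n-1}}\pmod 2$. The paper divides $g$ by $1+X^{2^{n-1}}$ instead of $X^{2^{n-1}}-1$, so the quotient part $\pm\eta_{n-1}^{h_1(\gamma)}$ lies exactly in $\Et{n-1}$; this avoids your detour through the square factor $\eta_n^{-2q(\gamma)}$, which is harmless anyway since $\pi_n$ kills $(\Vt{n}{0})^2$. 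One small slip: $C_n$ by itself is not $\gamma$-stable, since $c_n^{\gamma^{2^{n-1}}}=-c_{n-1}c_n^{-1}$, but $\Et{n-1}\times C_n$ is $\gamma$-stable, and that is all you actually use.
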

\begin{proof}
  Let $v=u\eta_n^{g(\gamma)}$.
  We can write
  \[
    g(X)=(1+X^{2^{n-1}})h_1(X)+h_0(X)
  \]
  with some $h_0,h_1\in\Z[X]$ and $\deg h_0<2^{n-1}$.
  Then we have
  \[
    v=u\eta_n^{(1+\gamma^{2^{n-1}})h_1(\gamma)+h_0(\gamma)}
    =\pm u\eta_{n-1}^{h_1(\gamma)}\eta_n^{h_0(\gamma)}
  \]
  using \cref{prop:norm_gam}.
  Hence $\pi_n(v)=(h_0\bmod2)\bmod I^{2^{n-1}}$.
  On the other hand,
  $1+X^{2^{n-1}}\equiv(1+X)^{2^{n-1}}\pmod2$
  holds from the binomial theorem.
  Hence $(g-h_0)\bmod2\in I^{2^{n-1}}$,
  which yields $\pi_n(v)=(g\bmod2)\bmod I^{2^{n-1}}$.
\end{proof}

\begin{lemma}
  \label{lem:etc}
  For $n\ge1$, we have
  \[
    (\Et{n-1}\times C_n)\cap(\Et{n})^2=(\Et{n-1})^2\times C_n^2.
  \]
\end{lemma}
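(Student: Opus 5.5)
The inclusion $\supseteq$ is immediate: $\Et{n-1}\subseteq\Vt{n}{0}\subseteq\Et{n}$ by \cref{lem:vt}~(\labelcref{it:vt_0}), and $C_n\subseteq\Et{n}$, so both $(\Et{n-1})^2$ and $C_n^2$ lie in $(\Et{n})^2$. For $\subseteq$, I would take $v=uc$ with $u\in\Et{n-1}$, $c\in C_n$ and $v=w^2$ with $w\in\Et{n}$. The plan is to first show $c\in C_n^2$. Once this holds, $u=vc^{-1}\in\Et{n-1}\cap(\Et{n})^2$, and \cref{lem:et_index} (applied at level $n-1$) gives $u\in(\Et{n-1})^2$, which finishes the proof. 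So everything reduces to the claim that the $C_n$-component of a square is a square.

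To isolate $c$ I would kill $u$ with the relative Galois action. Let $\sigma=\gamma^{2^{n-1}}$, which generates $\Gal(k_n/k_{n-1})$. The point to check is that $\Et{n-1}$ is pointwise fixed by $\sigma$, at least up to sign. This is clear for $E''_{n-1}$. If $\delta_{n-1}=1$, the extra generator $\sqrt{v_n^{(1)}}$ satisfies $\sqrt{v_n^{(1)}}^{\,\sigma}=-\sqrt{v_n^{(1)}}$, because $k_n=k_{n-1}(\sqrt{v_n^{(1)}})$ by \cref{lem:vn11}. Hence $u^{\sigma-1}=\pm1$, and
\[
  \pm c^{\sigma-1}=v^{\sigma-1}=(w^{\sigma-1})^2 .
\]
Write $c=c_n^{g(\gamma)}$. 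Then \cref{prop:norm_gam} gives $c^{1+\sigma}=\pm c_{n-1}^{g(\gamma)}$, and therefore $c^{\sigma-1}=\pm c_{n-1}^{g(\gamma)}c^{-2}$. Consequently $\pm c_{n-1}^{g(\gamma)}$ is a square in $E_n$. It is a unit of $\B_n$, so by \cref{prop:index_tau} it is a square in $E_{\B_n}$.

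Next I would work entirely inside $\B_n$, using the fact already used in \cref{lem:key_tau}: the cyclotomic unit group $C=\braket{-1}\times C_1\times\dots\times C_n$ has odd index $h_{\B_n}$ in $E_{\B_n}$. It follows that $C\cap E_{\B_n}^2=C^2$. Since $C$ is free modulo $\pm1$ on the displayed basis, this translates the square condition into a congruence on $g\bmod 2$ in terms of the basis of $C_{n-1}$.

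This is where I expect the main obstacle. The descent only controls $c^{1+\sigma}$, which sees $g$ modulo $1+X^{2^{n-1}}$ pushed down to level $n-1$. That does not immediately yield $c\in C_n^2$: for example, $c_n^{(1+\gamma^{2^{n-2}})h}$-type elements need separate treatment. To close the gap I would apply the same odd-index argument directly to $v$ after also killing the non-$\B_n$ part with $1+\tau$. Since $c^{\tau}=c$, we have $v^{1+\tau}=u^{1+\tau}c^2$, with $u^{1+\tau}\in E_{\B_{n-1}}$ up to sign. From this I would try to show, by induction on $n$, that $u^{1+\tau}$ lies in the level-$(n-1)$ part of $C$ modulo squares. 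I would then compare the $C_n$-coordinates of $w^{1+\tau}$ and $c$ in the basis of $C$, checking compatibility with the $\sigma$-descent congruence; the aim is to force $g\equiv0\pmod 2$, i.e.\ $c\in C_n^2$.
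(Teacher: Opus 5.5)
Your reduction is sound: $\supseteq$ is clear, and once $c\in C_n^2$ is known, \cref{lem:et_index} finishes exactly as in the paper. The gap is that $c\in C_n^2$ is never proved, and the route you sketch cannot prove it.

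Because you discard every sign, your $\sigma$-descent only gives $c_{n-1}^{g(\gamma)}\in\{\pm1\}C^2$. (Strictly, $w\in\Et{n}\subseteq E_{n+1}$, so you should apply \cref{prop:index_tau} at level $n+1$.) Modulo $\pm1$, the cyclotomic units of $\B_{n-1}$ form the cyclic module $\Z[X]/(1+X+\dots+X^{2^{n-1}-1})$ generated by $c_{n-1}$. So this condition says exactly that $g\bmod 2\in(1+X)^{2^{n-1}-1}\F_2[X]$. Since $\deg g<2^{n-1}$, the class $g\equiv 1+X+\dots+X^{2^{n-1}-1}\pmod 2$ survives; for $n=1$ nothing is gained at all, since $c_0=1$. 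For this class, $c^{1+\sigma}$ equals $N_{n,0}(c_n)=-1$ times a square. It is therefore invisible to any computation done modulo squares and signs.

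Your $1+\tau$ step cannot help either: $c^{1+\tau}=c^2$ carries no parity information. For instance, take $n=1$, $u\in E_0$ and $v=uc_1$. Then $v^{1+\gamma}=-u^2$ and $v^{1+\tau}=N_{k/\Q}(u)\,c_1^2$ are both squares up to sign. Only the \emph{sign} of $v^{1+\gamma}$ shows that $v$ is not a square.

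The missing idea is positivity, which is what the paper uses. An element of $(\Et{n})^2$ is the square of a real unit of $k_{n+1}$, hence totally positive, while $N_{n,0}(c_n)=-1$. The paper writes $f\equiv(1+X)^j\cdot(\text{unit})\pmod 2$ and raises $v$ to $(1+\gamma)^{2^n-j-1}$. This turns the $C_n$-part into $N_{n,0}(c_n)$ times a square and the $\Et{n-1}$-part into a product of squares, so a positive number would be negative. In your setup the fix is to keep the signs in the descent. Write $c=c_n^{1+\gamma+\dots+\gamma^{2^{n-1}-1}}d^2$ with $d\in C_n$. For $u\in E''_{n-1}$ one gets $v^{1+\sigma}=u^2N_{n,0}(c_n)d^{2(1+\sigma)}<0$, contradicting $v^{1+\sigma}=(w^{1+\sigma})^2$.

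Be aware that positivity does not cover one subcase. Suppose $\delta_{n-1}=1$ and $u=u's$ with $s=\sqrt{v_n^{(1)}}$ and $u'\in E''_{n-1}$. Then $u^{1+\sigma}=-u^2$ and the two signs cancel. The paper's display tacitly uses $u^{\gamma^{2^{n-1}}}=u$, which fails here when $j=2^{n-1}-1$. For example, for $m=119$ one has $\varepsilon=120+11\sqrt{119}\in k_1^{\times2}$, and $v=\sqrt{\varepsilon}\,c_1$ is totally positive.

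This subcase needs a $2$-adic argument instead:
\begin{enumerate}
\item $N_{k_n/k_{n-1}}(v)\in v_n^{(1)}k_{n-1}^{\times2}$, which rules out $v\in k_n^{\times2}$.
\item One has $c_n^{1+\gamma+\dots+\gamma^{2^{n-1}-1}}=\pm\sqrt{\beta_{n-1}}/\beta_n$ and $s=\pm\sqrt{\beta_{n-1}}\,y$ with $y\in k_{n-1}$. Hence $v\in\beta_nk_n^{\times2}$ would give $\pm u'y\in k_{n-1}^{\times2}\cup\beta_{n-1}k_{n-1}^{\times2}$.
\item That is impossible, because $y$ has valuation $-1$ at the prime of $k_{n-1}$ above $2$. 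This prime is totally ramified, since $\delta_{n-1}=1$ forces $m\equiv3\pmod 4$.
\end{enumerate}
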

\begin{proof}
  Write $v\in(\Et{n-1}\times C_n)\cap(\Et{n})^2$ as
  \(
    v=uc_n^{f(\gamma)}
  \)
  with some $u\in\Et{n-1}$ and $f\in\Z[X]$
  of degree less than $2^{n-1}$.
  Assume $f\not\equiv0\pmod2$.
  Then we can write
  \[
    f(X)=(1+X)^j(1+(1+X)g(X))+2h_0(X)
  \]
  with some $0\le j\le2^{n-1}-1$ and $g,h_0\in\Z[X]$.
  Using
  \begin{align}
    (1+X)^{2^n-1}&\equiv1+X+\dots+X^{2^n-1}\pmod2,\\
    (1+X)^{2^n}&\equiv1+X^{2^n}\pmod2,\\
    (1+X)^{2^{n-1}}&\equiv1+X^{2^{n-1}}\pmod2,
  \end{align}
  we obtain
  \begin{align}
    (1+X)^{2^n-j-1}f(X)&=1+X+\dots+X^{2^n-1}+(1+X^{2^n})g(X)+2h_1(X),\\
    (1+X)^{2^{n-1}}&=1+X^{2^{n-1}}+2h_2(X),
  \end{align}
  with some $h_1,h_2\in\Z[X]$.
  Therefore we have
  \begin{align}
    v^{(1+\gamma)^{2^n-j-1}}
    &=u^{(1+\gamma^{2^{n-1}}+2h_2(\gamma))(1+\gamma)^{2^{n-1}-j-1}}c_n^{1+\gamma+\dots+\gamma^{2^n-1}}c_n^{(1+\gamma^{2^n})g(\gamma)+2h_1(\gamma)}\\
    &=u^{(2+2h_2(\gamma))(1+\gamma)^{2^{n-1}-j-1}}N_{n,0}(c_n)c_n^{2g(\gamma)+2h_1(\gamma)}.
  \end{align}
  Since $v$ is totally positive, the left-hand side is positive.
  However the right-hand side is negative since $N_{n,0}(c_n)=-1$ from \cref{prop:norm_gam},
  which is a contradiction.
  Hence we have $f\equiv0\pmod2$,
  and also $u\in\Et{n-1}\cap(\Et{n})^2=(\Et{n-1})^2$
  from \cref{lem:et_index}.
  This proves the lemma.
\end{proof}

\begin{lemma}
  \label{lem:piv_0}
  Let $v\in\Vt{n}{0}\cap(\Et{n})^2$.
  If $\pi_n(v)=0$, then $\sqrt v\in\Vt{n}{0}$.
\end{lemma}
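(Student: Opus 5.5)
Write $v=u\eta_n^{g(\gamma)}$ with $u\in\Et{n-1}\times C_n$ and $g\in\Z[X]$ of degree less than $2^{n-1}$; this is possible by \cref{lem:vt}~(\labelcref{it:vt_0}). The hypothesis $\pi_n(v)=0$ means that the reduction of $g$ modulo $2$ lies in $I^{2^{n-1}}$. Since $\deg g<2^{n-1}$, that reduction is zero, so $g\equiv0\pmod2$. Hence $g=2g_1$ for some $g_1\in\Z[X]$, and $v=u\bigl(\eta_n^{g_1(\gamma)}\bigr)^2$.

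Put $w=\eta_n^{g_1(\gamma)}\in H_n\subseteq\Vt{n}{0}$. Then $u=vw^{-2}$. This element lies in $\Et{n-1}\times C_n$, and it is also a square in $\Et{n}$. For the latter, note that $v\in(\Et{n})^2$ by assumption. Also $w\in\Vt{n}{0}\subseteq\Et{n}$: the inclusion $\Vt{n}{j}\subseteq\Et{n}$ holds by construction, since $V_n^{(j)}\subseteq E''_n\subseteq\Et{n}$. Therefore \cref{lem:etc} applies and gives $u\in(\Et{n-1})^2\times C_n^2$.

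Write $u=u_1^2c^2$ with $u_1\in\Et{n-1}$ and $c\in C_n$. Then $v=(u_1cw)^2$, so $\sqrt v=\pm u_1cw$. Since $-1\in E_0\subseteq\Et{n-1}$, we get $\sqrt v\in\Et{n-1}\times C_n\times H_n=\Vt{n}{0}$.

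The step that needs the most care is the first reduction. It requires that the representation of $v$ has $\deg g<2^{n-1}$, so that $g\bmod 2\in I^{2^{n-1}}$ forces $g\equiv0$, and that $H_n$ is free on $\eta_n,\dots,\eta_n^{\gamma^{2^{n-1}-1}}$, which is what makes $\pi_n$ well defined. Both are implicit in the definition of $\pi_n$ and in \cref{lem:vt}~(\labelcref{it:vt_0}), so once those are accepted, the real content of the argument is \cref{lem:etc}.
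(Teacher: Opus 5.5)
Your proof is correct and follows the paper's argument. You reduce $\pi_n(v)=0$ to $g\equiv0\pmod2$, apply \cref{lem:etc} to the $\Et{n-1}\times C_n$-component, and conclude via the decomposition in \cref{lem:vt}. You also make explicit why $u=vw^{-2}$ lies in $(\Et{n})^2$, which the paper leaves implicit.
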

\begin{proof}
  We write $v=u\eta_n^{g(\gamma)}$ with
  $u\in\Et{n-1}\times C_n$ and $g\equiv0\pmod2$.
  Then from \cref{lem:etc},
  we have $u\in(\Et{n-1})^2\times C_n^2$.
  Hence $v$ is square in
  $\Et{n-1}\times C_n\times H_n=\Vt{n}{0}$
  from \cref{lem:vt}.
\end{proof}

Now we describe a pattern
for the first $2^{n-1}$ iterations
in the $n$-th layer.
\begin{theorem}
  \label{thm:fund_1}
  Let $n\ge1$ and $0\le j\le\min\{\at{n}-\at{n-1},2^{n-1}\}$.
  Then we have
  \begin{enumerate}
    \item $\vt{n}{j}\in\Vt{n}{0}$ if $j\ge1$,
    \label{it:fund_1_0}
    \item $(\Vt{n}{j})^2\subseteq\Vt{n}{0}$,
    \label{it:fund_1_sq}
    \item $\sqrt v\in\Vt{n}{j}\iff\pi_n(v)\in I^{2^{n-1}-j}/I^{2^{n-1}}$
    for $v\in\Vt{n}{0}\cap(\Et{n})^2$.
    \label{it:fund_1_pi}
  \end{enumerate}
\end{theorem}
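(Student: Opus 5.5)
I will argue by induction on $j$, proving (\labelcref{it:fund_1_0})--(\labelcref{it:fund_1_pi}) simultaneously. For $j=0$, (\labelcref{it:fund_1_0}) is vacuous and (\labelcref{it:fund_1_sq}) is trivial. For (\labelcref{it:fund_1_pi}), the condition $\pi_n(v)\in I^{2^{n-1}}/I^{2^{n-1}}$ means $\pi_n(v)=0$, so the direction ``$\Leftarrow$'' is \cref{lem:piv_0}. For ``$\Rightarrow$'', take $\sqrt v\in\Vt{n}{0}$. Then $v=(\sqrt v)^2$, and \cref{lem:pivf} gives $\pi_n(v)=2\pi_n(\sqrt v)=0$.

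Now assume the three statements hold for $j-1$, where $1\le j\le\min\{\at{n}-\at{n-1},2^{n-1}\}$.

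\emph{Step 1 (item (\labelcref{it:fund_1_0})).} By \cref{lem:vt}, $v:=\vt{n}{j}\in\Vt{n}{j-1}$ and $v^{1+\gamma}\in(\Vt{n}{j-1})^2$. The induction hypothesis (\labelcref{it:fund_1_sq}) then gives $v^{1+\gamma}\in\Vt{n}{0}$. The aim is to show $v\in\Vt{n}{0}$ itself. Write $v=w\cdot s$ with $w\in\Vt{n}{0}$ and $s$ a product of the square roots $\sqrt{\vt{n}{i}}$, $i<j$, that were adjoined. The minimality condition (\labelcref{it:vnj_min}) of \cref{def:iter} should force this. Let $i$ be the largest index of an adjoined root $\sqrt{\vt{n}{i}}$ occurring in $s$. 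Its class is detected by $\pi_n(\vt{n}{i})\in I^{2^{n-1}-i}\smallsetminus I^{2^{n-1}-i+1}$. Applying $1+\gamma$ multiplies this by $(1+X)$, pushing it into $I^{2^{n-1}-i+1}$, which is still outside $(\Vt{n}{j-1})^2$-membership. This would contradict $v^{1+\gamma}\in(\Vt{n}{j-1})^2$ unless no such root occurs. So I will formulate the induction with a strengthened item: $\Vt{n}{j}$ is generated by $\Vt{n}{0}$ together with square roots $\sqrt{w_1},\dots,\sqrt{w_j}$ with $\pi_n(w_i)$ a generator of $I^{2^{n-1}-i}/I^{2^{n-1}-i+1}$ (i.e.\ $\equiv(1+X)^{2^{n-1}-i}$ modulo higher powers). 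With this, $\Vt{n}{j-1}/\Vt{n}{0}\cong(\F_2)^{j-1}$ via these roots, and the ``$1+\gamma$'' computation on $\sqrt{w_i}$ is controlled through $\pi_n$ of its square.

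\emph{Step 2 (items (\labelcref{it:fund_1_sq}) and (\labelcref{it:fund_1_pi})).} Once $v\in\Vt{n}{0}$, we get $(\Vt{n}{j})^2=\langle(\Vt{n}{j-1})^2,v\rangle\subseteq\Vt{n}{0}$, which is (\labelcref{it:fund_1_sq}). For (\labelcref{it:fund_1_pi}), first compute $\pi_n(v)$. Since $v\notin(\Vt{n}{j-1})^2$, the induction hypothesis gives $\pi_n(v)\notin I^{2^{n-1}-j+1}$. Since $v^{1+\gamma}\in(\Vt{n}{j-1})^2$, we get $(1+X)\pi_n(v)\in I^{2^{n-1}-j+1}$. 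Because $\F_2[X]/I^{2^{n-1}}$ is a uniserial local ring with maximal ideal $I$, these two facts force $\pi_n(v)\in I^{2^{n-1}-j}\smallsetminus I^{2^{n-1}-j+1}$.

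Then, for $u\in\Vt{n}{0}\cap(\Et{n})^2$: we have $\sqrt u\in\Vt{n}{j}$ iff $u\in\langle\Vt{n}{j-1}^2,v\rangle$, up to the sign issues absorbed by $-1\in\Vt{n}{0}$. That is, $\sqrt u\in\Vt{n}{j}$ iff $\sqrt u$ or $\sqrt{uv}$ lies in $\Vt{n}{j-1}$. By induction this holds iff $\pi_n(u)$ or $\pi_n(u)+\pi_n(v)$ lies in $I^{2^{n-1}-j+1}$, which is equivalent to $\pi_n(u)\in I^{2^{n-1}-j}$. Here I use the uniseriality again: $I^{2^{n-1}-j}/I^{2^{n-1}-j+1}$ is one-dimensional over $\F_2$, spanned by the image of $\pi_n(v)$. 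The hypothesis $j\le2^{n-1}$ is exactly what keeps the exponents nonnegative.

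The main obstacle is Step 1: showing that the minimal-index choice of $\vt{n}{j}$ lands in $\Vt{n}{0}$ rather than involving earlier adjoined roots. The cleanest fix is probably to avoid relying on minimality. Instead, I would show directly, via the $\pi_n$-filtration and \cref{lem:key_gam}, that some $\gamma$-invariant nontrivial class of $(\Vt{n}{j-1}\cap(\Et{n})^2)/(\Vt{n}{j-1})^2$ is represented in $\Vt{n}{0}$. Then I would use the uniqueness remark after \cref{def:iter}, together with the fact that the basis ordering places $\Vt{n}{0}$-elements before the replaced slots, to identify it with $\vt{n}{j}$.
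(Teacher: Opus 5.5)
Your base case and Step~2 are fine and coincide with the paper: once $\vt{n}{j}\in\Vt{n}{0}$ is known, items (\labelcref{it:fund_1_sq}) and (\labelcref{it:fund_1_pi}) follow exactly as you describe. The gap is Step~1, which is where the theorem's real content lies.

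You propose to let $1+\gamma$ act on an adjoined root by ``multiplying its $\pi_n$-class by $1+X$''. This is not legitimate. $\pi_n$ is defined only on $\Vt{n}{0}$, and the mod-$2$ datum $\pi_n(w^2)$ does not determine $w^{1+\gamma}$ modulo $(\Vt{n}{j-1})^2$.

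What mod-$2$ bookkeeping does give is the following. By the induction hypothesis (\labelcref{it:fund_1_sq}), both $(\vt{n}{j})^2$ and $(\vt{n}{j})^{1+\gamma}$ lie in $\Vt{n}{0}$. Hence $(1+X)\pi_n((\vt{n}{j})^2)=2\pi_n((\vt{n}{j})^{1+\gamma})=0$, i.e.\ $\pi_n((\vt{n}{j})^2)=e(1+X)^{2^{n-1}-1}$ with $e\in\{0,1\}$.
\begin{itemize}
\item This rules out the roots $\sqrt{\vt{n}{i}}$ with $i\ge2$. It does so for this reason, not the one you give: for $i<j$ one has $I^{2^{n-1}-i+1}\subseteq I^{2^{n-1}-j+1}$, so landing there is consistent with the criterion of (\labelcref{it:fund_1_pi}) at level $j-1$, not a contradiction.
\item It does not rule out $\vt{n}{j}\in\sqrt{\vt{n}{1}}\,\Vt{n}{0}$. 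There $(1+X)(1+X)^{2^{n-1}-1}=0$ in $\F_2[X]/I^{2^{n-1}}$, and your heuristic sees nothing.
\end{itemize}
Your fallback does not close this either. Showing that some $\gamma$-invariant class of $(\Vt{n}{j-1}\cap(\Et{n})^2)/(\Vt{n}{j-1})^2$ is represented in $\Vt{n}{0}$ is the same problem, and \cref{lem:key_gam} plus the $\pi_n$-filtration gives no handle on it.

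The missing idea is a computation modulo $4$, as in the paper. Write $(\vt{n}{j})^2=u\eta_n^{g(\gamma)}$ with $u\in\Et{n-1}\times C_n$ and $g=e(1+X)^{2^{n-1}-1}+2h_0$. For $n\ge2$, combine the congruence $(1+X)^{2^{n-1}}\equiv1+2X^{2^{n-2}}+X^{2^{n-1}}\pmod4$ with $\eta_n^{1+\gamma^{2^{n-1}}}=\pm\eta_{n-1}$ (\cref{prop:norm_gam}). This gives $(\vt{n}{j})^{2+2\gamma}=\pm u^{1+\gamma}\eta_{n-1}^{e}\eta_n^{2e\gamma^{2^{n-2}}+2(1+\gamma)h_0(\gamma)+4h_1(\gamma)}$ for some $h_1\in\Z[X]$. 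Then \cref{lem:etc} lets you take a square root of $\pm u^{1+\gamma}\eta_{n-1}^{e}$ inside $\Et{n-1}\times C_n$.

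Hence $\pi_n((\vt{n}{j})^{1+\gamma})$ is the class of $eX^{2^{n-2}}+(1+X)h_0$, which is not in $I$ if $e=1$. But the induction hypothesis (\labelcref{it:fund_1_pi}) puts it in $I^{2^{n-1}-j+1}\subseteq I$, because $j\le2^{n-1}$. So $e=0$, and \cref{lem:piv_0} gives $\vt{n}{j}\in\Vt{n}{0}$. For $n=1$ this is just \cref{lem:vt}.

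Note that the minimality condition of \cref{def:iter} is never used: every $w\in\Vt{n}{j-1}$ with $w^{1+\gamma}\in(\Vt{n}{j-1})^2$ already lies in $\Vt{n}{0}$. Conceptually, \cref{lem:etc} makes $\Vt{n}{j}/(\Et{n-1}\times C_n)$ a $\gamma$-stable lattice over $\Z[\zeta_{2^n}]$. It contains $\Vt{n}{0}/(\Et{n-1}\times C_n)$, which is free of rank one on the class of $\eta_n$, with index $2^j$, so it equals $(1-\zeta_{2^n})^{-j}$ times that module. This characteristic-$0$ information is exactly what residues mod $2$ discard.
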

\begin{proof}
  Induction on $j$.
  If $j=0$, \cref{lem:piv_0} yields (\labelcref{it:fund_1_pi}).

  Let $j\ge1$
  and assume (\labelcref{it:fund_1_0})--(\labelcref{it:fund_1_pi})
  hold for $j-1$.
  Since $(\vt{n}{j})^2,(\vt{n}{j})^{1+\gamma}\in(\Vt{n}{j-1})^2$,
  we have $(\vt{n}{j})^2,(\vt{n}{j})^{1+\gamma}\in\Vt{n}{0}\cap(\Et{n})^2$
  from the induction hypothesis.
  Write
  \(
    (\vt{n}{j})^2=u\eta_n^{g(\gamma)}
  \)
  with some $u\in\Et{n-1}\times C_n$
  and $g\in\Z[X]$ with $\deg g<2^{n-1}$.
  Then $\pi_n((\vt{n}{j})^2)=(g\bmod2)\bmod I^{2^{n-1}}$.
  From \cref{lem:pivf}
  we have
  \[
    (1+X)\pi_n((\vt{n}{j})^2)=\pi_n((\vt{n}{j})^{2+2\gamma})
    =2\pi_n((\vt{n}{j})^{1+\gamma})=0.
  \]
  Hence $g\bmod2\in I^{2^{n-1}-1}$.
  Since $\deg g<2^{n-1}$,
  we can write
  \[
    g(X)=e(1+X)^{2^{n-1}-1}+2h_0(X)
  \]
  with some $e\in\{0,1\}$ and $h_0\in\Z[X]$.

  If $n=1$, then from our assumption $j=1$ must hold,
  hence (\labelcref{it:fund_1_0}) is shown simply from \cref{lem:vt}.
  If $n\ge2$, it is easy to see that
  \[
    (1+X)^{2^{n-1}}\equiv1+2X^{2^{n-2}}+X^{2^{n-1}}\pmod4
  \]
  holds.
  Using this, we write
  \[
    (1+X)g(X)=e(1+2X^{2^{n-2}}+X^{2^{n-1}})+2(1+X)h_0(X)+4h_1(X)
  \]
  with some $h_1\in\Z[X]$.
  Therefore we have
  \[
    (\vt{n}{j})^{2+2\gamma}
    =\pm u^{1+\gamma}\eta_{n-1}^e\eta_n^{2e\gamma^{2^{n-2}}+2(1+\gamma)h_0(\gamma)+4h_1(\gamma)}.
  \]
  From \cref{lem:etc}, we can write
  $\pm u^{1+\gamma}\eta_{n-1}^e=v^2$
  with $v\in\Et{n-1}\times C_n$.
  Hence we have
  \[
    (\vt{n}{j})^{1+\gamma}
    =\pm v\eta_n^{e\gamma^{2^{n-2}}+(1+\gamma)h_0(\gamma)+2h_1(\gamma)},
  \]
  and from \cref{lem:pivf}
  we have
  \[
    \pi_n((\vt{n}{j})^{1+\gamma})=((eX^{2^{n-2}}+(1+X)h_0(X))\bmod2)\bmod I^{2^{n-1}}.
  \]
  On the other hand,
  from $(\vt{n}{j})^{1+\gamma}\in(\Vt{n}{j-1})^2$
  and the induction hypothesis,
  we have $\pi_n((\vt{n}{j})^{1+\gamma})\in I^{2^{n-1}-j+1}/I^{2^{n-1}}$.
  Hence from the above, $e=0$ must hold.
  Therefore we have $\pi_n((\vt{n}{j})^2)=0$,
  which yields (\labelcref{it:fund_1_0}) from \cref{lem:piv_0}.
  From this and the induction hypothesis,
  (\labelcref{it:fund_1_sq}) can also be shown.
  Also we have
  \[
    (1+X)\pi_n(\vt{n}{j})=\pi_n((\vt{n}{j})^{1+\gamma})\in I^{2^{n-1}-j+1}/I^{2^{n-1}},
  \]
  which yields
  $\pi_n(\vt{n}{j})\in I^{2^{n-1}-j}/I^{2^{n-1}}$.

  Let $v\in\Vt{n}{0}\cap(\Et{n})^2$.
  Suppose $\sqrt v\in\Vt{n}{j}$
  and want to show $\pi_n(v)\in I^{2^{n-1}-j}/I^{2^{n-1}}$.
  If $\sqrt v\in\Vt{n}{j-1}$,
  it holds simply from the induction hypothesis.
  Otherwise, we have $\sqrt{v^{-1}\vt{n}{j}}\in\Vt{n}{j-1}$,
  hence
  \[
    \pi_n(v^{-1}\vt{n}{j})\in I^{2^{n-1}-j+1}/I^{2^{n-1}}
  \]
  from the induction hypothesis.
  Since $\pi_n(\vt{n}{j})\in I^{2^{n-1}-j}/I^{2^{n-1}}$
  as shown above,
  $\pi_n(v)\in I^{2^{n-1}-j}/I^{2^{n-1}}$ follows.
  Conversely,
  suppose $\pi_n(v)\in I^{2^{n-1}-j}/I^{2^{n-1}}$
  and want to show $\sqrt v\in\Vt{n}{j}$.
  If $\pi_n(v)\in I^{2^{n-1}-j+1}/I^{2^{n-1}}$,
  it holds simply from the induction hypothesis.
  Otherwise, we have $\pi_n(v^{-1}\vt{n}{j})\in I^{2^{n-1}-j+1}/I^{2^{n-1}}$
  since $\gind{I^{2^{n-1}-j}}{I^{2^{n-1}-j+1}}=2$.
  Hence we have
  $\sqrt{v^{-1}\vt{n}{j}}\in\Vt{n}{j-1}$
  from the induction hypothesis.
  Therefore we have $\sqrt v\in\Vt{n}{j}$.
  This shows (\labelcref{it:fund_1_pi}).
\end{proof}

\begin{corollary}
  \label{cor:fund_1_eta}
  Let $n\ge1$ and $1\le j\le\min\{\at{n}-\at{n-1},2^{n-1}\}$.
  If we write
  $\vt{n}{j}=u\eta_n^{g(\gamma)}$
  with $u\in\Et{n-1}\times C_n$ and $g\in\Z[X]$ of degree less than $2^{n-1}$,
  then $g$ is the polynomial with $\{0,1\}$-coefficients
  satisfying $g(X)\equiv(1+X)^{2^{n-1}-j}\pmod2$.
\end{corollary}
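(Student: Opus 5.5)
The plan is an induction on $j$ that tracks the $\eta_n$-exponent of each $\vt{n}{j}$ through the map $\pi_n$. By \cref{thm:fund_1}~(\labelcref{it:fund_1_0}), $\vt{n}{j}\in\Vt{n}{0}=\Et{n-1}\times C_n\times H_n$. So $\vt{n}{j}=u\eta_n^{g(\gamma)}$ with $\deg g<2^{n-1}$, and by definition $\pi_n(\vt{n}{j})=(g\bmod 2)\bmod I^{2^{n-1}}$.

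First I will pin down $\pi_n(\vt{n}{j})$ exactly. The proof of \cref{thm:fund_1} shows $\pi_n(\vt{n}{j})\in I^{2^{n-1}-j}/I^{2^{n-1}}$. I also claim $\pi_n(\vt{n}{j})\notin I^{2^{n-1}-j+1}/I^{2^{n-1}}$. Indeed, $\vt{n}{j}\in(\Et{n})^2\cap\Vt{n}{0}$ by \cref{lem:vt}, so if it lay in that smaller ideal, \cref{thm:fund_1}~(\labelcref{it:fund_1_pi}) applied with $j-1$ would give $\sqrt{\vt{n}{j}}\in\Vt{n}{j-1}$. This contradicts $\vt{n}{j}\notin(\Vt{n}{j-1})^2$. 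Since $I^{2^{n-1}-j}/I^{2^{n-1}-j+1}$ has order $2$ with nonzero class $(1+X)^{2^{n-1}-j}$, we get $g\equiv(1+X)^{2^{n-1}-j}+r$ modulo $2$, where $r\in I^{2^{n-1}-j+1}$ and $\deg r<2^{n-1}$. Thus only the leading part is forced so far. The remaining task is to show $r\equiv 0$, i.e.\ that the minimality rule in \cref{def:iter} selects exactly this representative.

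For this I use condition (\labelcref{it:vnj_min}) together with the ordering of the basis. The basis $B_n^{(j-1)}$ lists, in order, the elements coming from $\Et{n-1}$, then the $c_n^{\gamma^i}$, then the $\eta_n^{\gamma^i}$ with $i=0,\dots,2^{n-1}-1$. By induction, the first $j-1$ steps replaced the $\eta_n^{\gamma^i}$ at the last positions $i=2^{n-1}-1,\dots,2^{n-1}-j+1$. This holds because the leading term of $(1+X)^{2^{n-1}-k}$ has degree $2^{n-1}-k$; the base $j=1$ gives the top-degree position. So the $\eta$-part of $B_n^{(j-1)}$ consists of $\eta_n^{\gamma^i}$ for $i\le 2^{n-1}-j$ and $\sqrt{\vt{n}{k}}$ at positions $2^{n-1}-k$ for $k<j$.

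Write $\vt{n}{j}$ as a product of distinct basis elements. By induction, each $\sqrt{\vt{n}{k}}$ contributes $\eta$-exponent $\equiv(1+X)^{2^{n-1}-k}/2$, a half-integral quantity that does not lie in $\Z[X]$. Hence any genuine $\mathbb{F}_2$-reduction must be handled carefully. More simply, note that the top index $i_t$ of $\vt{n}{j}$ must equal $2^{n-1}-j$ in the $\eta$-block. It cannot be larger, since the positions above are square roots and their inclusion changes the class of $\pi_n$ relative to $I^{2^{n-1}-j+1}$. It cannot be smaller, since elements of $\Et{n-1}\times C_n$ alone are excluded by \cref{lem:etc}.

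Then $\vt{n}{j}$ is a product $\prod_{i\in S}\eta_n^{\gamma^i}$ times elements of $\Et{n-1}\times C_n$, with $S\subseteq\{0,\dots,2^{n-1}-j\}$. So $g$ already has $\{0,1\}$-coefficients, and $\deg g=2^{n-1}-j$. Combined with $g\equiv(1+X)^{2^{n-1}-j}\pmod{I^{2^{n-1}-j+1}}$, uniqueness is not yet automatic. I will close the gap as follows. The element $w=\vt{n}{j}\eta_n^{-(1+\gamma)^{2^{n-1}-j}}$, with $\{0,1\}$ reduction of the exponent, has $\pi_n(w)=r$. Applying \cref{thm:fund_1}~(\labelcref{it:fund_1_pi}) and the $\gamma$-stability of $\Vt{n}{j-1}$, one shows $\eta_n^{(1+\gamma)^{2^{n-1}-j}}\cdot u'$ also satisfies (\labelcref{it:vnj_prod})--(\labelcref{it:vnj_gam}) for a suitable $u'\in\Et{n-1}\times C_n$. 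This candidate has the same top index. Uniqueness of $v_n^{(j)}$ via the symmetric-difference argument then forces $r=0$.

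The main obstacle is exactly this last step: showing that $(1+X)^{2^{n-1}-j}$ itself is realized by a valid candidate, i.e.\ that the correction terms in $I^{2^{n-1}-j+1}$ can be absorbed by square roots already in the basis and do not appear explicitly among the $\eta_n^{\gamma^i}$ with $i<2^{n-1}-j$. I would try first to carry $\pi_n(\sqrt{\vt{n}{k}})$-type data as an extended invariant on $\Vt{n}{j-1}$, and verify that the $\eta$-exponent vectors of $B_n^{(j-1)}$ are the triangular basis $\{X^i\}_{i\le2^{n-1}-j}\cup\{(1+X)^{2^{n-1}-k}\}_{k<j}$.
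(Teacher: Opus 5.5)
Your outline matches the paper's up to the last step. You use induction on $j$ and the shape of $B_n^{(j-1+\delta_{n-1})}$. You show that no $\sqrt{\vt{n}{k}}$ can occur, so $g$ has $\{0,1\}$-coefficients and $\deg g\le 2^{n-1}-j$. And \cref{thm:fund_1}~(\labelcref{it:fund_1_pi}) gives $\bar g:=g\bmod 2\in I^{2^{n-1}-j}\smallsetminus I^{2^{n-1}-j+1}$.

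The gap comes next. You declare the conclusion ``not yet automatic'' and rest it on an unproved claim: that some $\eta_n^{(1+\gamma)^{2^{n-1}-j}}u'$ with $u'\in\Et{n-1}\times C_n$ satisfies (\labelcref{it:vnj_prod})--(\labelcref{it:vnj_gam}). Nothing you cite shows this element is a square in $\Et{n}$. Indeed, \cref{thm:fund_1}~(\labelcref{it:fund_1_pi}) only applies to elements already known to lie in $(\Et{n})^2$. So as written the proof does not close.

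What you missed is that no such step is needed. Since $\deg\bar g<2^{n-1}$, the condition $\pi_n(\vt{n}{j})\in I^{2^{n-1}-j}/I^{2^{n-1}}$ says exactly that $(1+X)^{2^{n-1}-j}$ divides $\bar g$ in $\F_2[X]$. A nonzero element of $I^d$ has degree at least $d$. Your $r=\bar g-(1+X)^{2^{n-1}-j}$ has degree at most $2^{n-1}-j$ and lies in $I^{2^{n-1}-j+1}$, so $r=0$. This is precisely the paper's final line. Your worry that correction terms in $I^{2^{n-1}-j+1}$ could hide among the $\eta_n^{\gamma^i}$ with $i<2^{n-1}-j$ fails for the same degree reason. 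Likewise your separate ``cannot be smaller'' step is superfluous, since $\bar g\neq0$ together with divisibility already forces $\deg g=2^{n-1}-j$. As stated, that step via \cref{lem:etc} only excludes the case with no $\eta_n$-factor at all.

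Your justification for excluding square-root factors also needs repair. The phrase ``their inclusion changes the class of $\pi_n$'' is not meaningful, because $\pi_n$ is only defined on $\Vt{n}{0}$ and $\sqrt{\vt{n}{k}}\notin\Vt{n}{0}$. The paper argues as follows:
\begin{itemize}
\item By \cref{thm:fund_1}~(\labelcref{it:fund_1_0}), $\vt{n}{j}\in\Vt{n}{0}$.
\item Suppose factors $\sqrt{\vt{n}{k}}$ with $k\in K\neq\emptyset$ occurred, and let $k=\max K$.
\item Then $\sqrt{\vt{n}{k}}\notin\Vt{n}{k-1}$, because $\vt{n}{k}\notin(\Vt{n}{k-1})^2$, while every other factor lies in $\Vt{n}{k-1}$.
\item So the product lies outside $\Vt{n}{k-1}\supseteq\Vt{n}{0}$, a contradiction.
\end{itemize}
Your abandoned half-integrality idea would also work if completed. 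The $\eta_n$-coordinates of those factors sum to $\frac12\sum_{k\in K}g_k$, where the $g_k$ have distinct degrees and leading coefficient $1$. So the coefficient in degree $2^{n-1}-\min K$ is $\frac12$. The factors $\eta_n^{\gamma^i}$ with $i\le 2^{n-1}-j$ cannot cancel it, which contradicts $g\in\Z[X]$.
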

\begin{proof}
  Induction on $j$.
  Assume that the claim holds for $\vt{n}{j'}$,
  $j'<j$.
  Then, if we let $(b_0,\dots,b_{2^{n+1}-1})=B_n^{(j-1+\delta_{n-1})}$,
  we have
  \[
    b_i=\begin{cases}
      \eta_n^{\gamma^{i-(2^{n+1}-2^{n-1})}}&(2^{n+1}-2^{n-1}\le i\le2^{n+1}-j),\\
      \sqrt{\vt{n}{2^{n+1}-i}}&(2^{n+1}-j<i<2^{n+1}).
    \end{cases}
  \]
  Write $\vt{n}{j}=b_{i_1}\dotsm b_{i_t}$ with $i_1<\dots<i_t$ and
  suppose $i_t>2^{n+1}-j$.
  Taking the minimum $s$ with $i_s>2^{n+1}-j$,
  we obtain $b_{i_s}\notin\Vt{n}{2^{n+1}-i_s-1}$,
  while $b_{i_r}\in\Vt{n}{2^{n+1}-i_s-1}$ for $r\ne s$.
  This contradicts $\vt{n}{j}\in\Vt{n}{0}$.
  Hence we have $i_t\le2^{n+1}-j$,
  which yields
  that $g$ is a polynomial with $\{0,1\}$-coefficients
  and $\deg g\le2^{n-1}-j$.

  From \cref{thm:fund_1}~(\labelcref{it:fund_1_pi}),
  we have $g\bmod2\in I^{2^{n-1}-j}\smallsetminus I^{2^{n-1}-j+1}$.
  Therefore our claim must hold.
\end{proof}

\begin{corollary}
  \label{cor:fund_1_gam}
  \((\Vt{n}{j+1}\smallsetminus\Vt{n}{j})^{1+\gamma}
  \subseteq\Vt{n}{j}\smallsetminus\Vt{n}{j-1}\)
  for $n\ge1$ and $1\le j<\min\{\at{n}-\at{n-1},2^{n-1}\}$.
\end{corollary}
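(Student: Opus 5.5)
We reduce the statement to the $\pi_n$-characterization in \cref{thm:fund_1}~(\labelcref{it:fund_1_pi}). Take $w\in\Vt{n}{j+1}\smallsetminus\Vt{n}{j}$ and put $v=w^2$. Since $j+1\le\min\{\at{n}-\at{n-1},2^{n-1}\}$, \cref{thm:fund_1}~(\labelcref{it:fund_1_sq}) applied with $j+1$ gives $v\in\Vt{n}{0}$. Moreover $v\in(\Et{n})^2$ because $\Vt{n}{j+1}\subseteq\Et{n}$.

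Now apply \cref{thm:fund_1}~(\labelcref{it:fund_1_pi}) with $j+1$ and with $j$. The first gives $\pi_n(v)\in I^{2^{n-1}-j-1}/I^{2^{n-1}}$. The second, combined with $\sqrt v=\pm w\notin\Vt{n}{j}$ (note $-1\in\Vt{n}{j}$), gives $\pi_n(v)\notin I^{2^{n-1}-j}/I^{2^{n-1}}$. So $\pi_n(v)$ lies in $I^{2^{n-1}-j-1}$ but not in $I^{2^{n-1}-j}$, modulo $I^{2^{n-1}}$. Since $\F_2[X]$ is a PID and $I=(1+X)$, multiplying by $1+X$ moves an element of exact $I$-adic order $r<2^{n-1}-1$ to exact order $r+1$. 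Here $r=2^{n-1}-j-1\le 2^{n-1}-2$ because $j\ge1$. Hence $(1+X)\pi_n(v)\in I^{2^{n-1}-j}\smallsetminus I^{2^{n-1}-j+1}$, and it stays nonzero modulo $I^{2^{n-1}}$ because $j\ge1$.

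Next transfer this back to $w^{1+\gamma}$. We have $(w^{1+\gamma})^2=v^{1+\gamma}\in\Vt{n}{0}$, and by \cref{lem:pivf}, $\pi_n(v^{1+\gamma})=(1+X)\pi_n(v)$. The element $w^{1+\gamma}$ lies in $\Vt{n}{j+1}$, since the groups are $\gamma$-invariant, so its square lies in $\Vt{n}{0}\cap(\Et{n})^2$. Applying \cref{thm:fund_1}~(\labelcref{it:fund_1_pi}) with $j$ (using $\pi_n\in I^{2^{n-1}-j}$) gives $w^{1+\gamma}\in\Vt{n}{j}$. Applying it with $j-1$ (using $\pi_n\notin I^{2^{n-1}-j+1}$) gives $w^{1+\gamma}\notin\Vt{n}{j-1}$. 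This uses $j-1\ge0$, which holds.

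The main obstacle is the sign ambiguity: \cref{thm:fund_1} speaks of $\sqrt{v}$, which is defined only up to $\pm1$. I would resolve it by noting that $-1\in\Vt{n}{0}$ (inherited from $E_0$), so membership of $\pm w^{1+\gamma}$ in $\Vt{n}{j}$ does not depend on the sign. The remaining checks are that each application of \cref{thm:fund_1} stays in its allowed range of $j$, and that the membership of $w^{1+\gamma}$ in $\Vt{n}{j+1}$ and the squares involved are justified, as done above.
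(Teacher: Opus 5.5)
Your proposal is correct and follows essentially the same route as the paper. Both arguments square into $\Vt{n}{0}$ via \cref{thm:fund_1}~(\labelcref{it:fund_1_sq}), pin down the exact $I$-adic level of $\pi_n(w^2)$ with (\labelcref{it:fund_1_pi}), multiply by $1+X$ using \cref{lem:pivf}, and read the result back through (\labelcref{it:fund_1_pi}). Your explicit treatment of the sign $\pm1$ and of the $\gamma$-invariance needed to place $w^{2+2\gamma}$ in $(\Et{n})^2$ spells out what the paper leaves implicit.
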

\begin{proof}
  Let $v\in\Vt{n}{j+1}\smallsetminus\Vt{n}{j}$.
  From \cref{thm:fund_1}~(\labelcref{it:fund_1_sq}),
  we have
  $v^2\in\Vt{n}{0}\cap(\Et{n})^2$.
  Hence
  from \cref{thm:fund_1}~(\labelcref{it:fund_1_pi}),
  we have
  \[
    \pi_n(v^2)\in(I^{2^{n-1}-j-1}/I^{2^{n-1}})\smallsetminus(I^{2^{n-1}-j}/I^{2^{n-1}}),
  \]
  which yields
  \[
    \pi_n(v^{2+2\gamma})=(1+X)\pi_n(v^2)\in(I^{2^{n-1}-j}/I^{2^{n-1}})\smallsetminus(I^{2^{n-1}-j+1}/I^{2^{n-1}}).
  \]
  Again
  from \cref{thm:fund_1}~(\labelcref{it:fund_1_pi}),
  we have
  $v^{1+\gamma}\in\Vt{n}{j}\smallsetminus\Vt{n}{j-1}$.
\end{proof}

\begin{corollary}
  \label{cor:fund_1_gam_inv}
  Let $n\ge1$, $1\le j\le\min\{\at{n}-\at{n-1},2^{n-1}\}$,
  and $v\in\Vt{n}{0}\cap(\Et{n})^2$.
  If $\sqrt v^{1+\gamma}\in\Vt{n}{j-1}$,
  then $\sqrt v\in\Vt{n}{j}$.
\end{corollary}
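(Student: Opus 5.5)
The plan is to translate both the hypothesis and the conclusion into statements about $\pi_n$ via \cref{thm:fund_1}~(\labelcref{it:fund_1_pi}), and then to use one divisibility fact in $\F_2[X]/I^{2^{n-1}}$. Throughout, put $N=2^{n-1}$. Write $w=\sqrt v^{1+\gamma}=\sqrt v\cdot(\sqrt v)^\gamma$, which is a unit of $k_n$ with $w^2=v^{1+\gamma}$.

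First I check that $v^{1+\gamma}$ satisfies the hypotheses of \cref{thm:fund_1}~(\labelcref{it:fund_1_pi}).
\begin{itemize}
\item Since $v\in\Vt{n}{0}$ and $\Vt{n}{0}=\Et{n-1}\times C_n\times H_n$ is $\gamma$-invariant, we have $v^{1+\gamma}\in\Vt{n}{0}$.
\item By hypothesis $w\in\Vt{n}{j-1}\subseteq\Et{n}$, so $v^{1+\gamma}=w^2\in(\Et{n})^2$.
\end{itemize}
Applying \cref{thm:fund_1}~(\labelcref{it:fund_1_pi}) with $j-1$ in place of $j$ (allowed, since $0\le j-1\le\min\{\at{n}-\at{n-1},N\}$) gives
\[
  \pi_n(v^{1+\gamma})\in I^{N-j+1}/I^{N}.
\]
By \cref{lem:pivf}, $\pi_n(v^{1+\gamma})=(1+X)\pi_n(v)$.

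Next comes the algebraic step, which is the only real content. Set $Y=1+X$, so that $\F_2[X]/I^N\cong\F_2[Y]/(Y^N)$. Lift $\pi_n(v)$ to a polynomial $f$. The condition above says
\[
  Yf\in(Y^{N-j+1})+(Y^N)=(Y^{N-j+1}),
\]
because $N-j+1\le N$. Dividing by $Y$ in the domain $\F_2[Y]$ yields $f\in(Y^{N-j})$. Hence $\pi_n(v)\in I^{N-j}/I^N$. The only care needed is that $j\ge1$, so the exponent $N-j+1$ does not exceed $N$ and no information is lost modulo $Y^N$.

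Finally, $v\in\Vt{n}{0}\cap(\Et{n})^2$ by assumption. Applying \cref{thm:fund_1}~(\labelcref{it:fund_1_pi}) with $j$ itself turns $\pi_n(v)\in I^{N-j}/I^N$ into $\sqrt v\in\Vt{n}{j}$, as desired. I do not expect a genuine obstacle: the whole argument is bookkeeping on top of \cref{thm:fund_1}, and the point to verify carefully is the pair of membership conditions for $v^{1+\gamma}$ checked above.
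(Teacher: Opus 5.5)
Your proof is correct and follows the paper's route. The paper only says this corollary is shown ``similarly as Corollary~\ref{cor:fund_1_gam}'' via part~(3) of Theorem~\ref{thm:fund_1}, which is exactly your argument: translate the hypothesis into $(1+X)\pi_n(v)=\pi_n(v^{1+\gamma})\in I^{2^{n-1}-j+1}/I^{2^{n-1}}$, then cancel $1+X$ in $\F_2[X]$. One point should be phrased more carefully: $\Vt{n}{0}$ is $\gamma$-invariant because $\Vt{n}{0}=V_n^{(\delta_{n-1})}$ and every $V_n^{(j)}$ is $\gamma$-invariant, not because of the product decomposition, since $C_n$ and $H_n$ are not individually $\gamma$-invariant; this does not affect the argument.
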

\begin{proof}
  Similarly as \cref{cor:fund_1_gam},
  it is shown
  using \cref{thm:fund_1}~(\labelcref{it:fund_1_pi}).
\end{proof}

The next theorem states
a relationship between
the $n$-th layer and the $(n+1)$-st layer.
\begin{theorem}
  \label{thm:fund_2}
  Let $n\ge1$ and assume $\at{n}-\at{n-1}<2^{n-1}$.
  Then for $1\le j\le\at{n+1}-\at{n}$,
  we have $j\le\at{n}-\at{n-1}$ and
  \[
    (\Vt{n+1}{j}\smallsetminus\Vt{n+1}{j-1})^{1+\gamma^{2^n}}\subseteq\Vt{n}{j}\smallsetminus\Vt{n}{j-1}.
  \]
\end{theorem}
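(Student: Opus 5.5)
Let $d=\at{n}-\at{n-1}<2^{n-1}$. The plan is an induction on $j$ that shows two things together: $\vt{n+1}{j}$ can be pushed down to the $n$-th layer via the norm $N_{n+1,n}=1+\gamma^{2^n}$, and the norm lands strictly in the $j$-th step of the $n$-th filtration. The guiding principle is that $N_{n+1,n}$ plays the role in the $\gamma^{2^n}$ direction that $1+\gamma$ plays in \cref{cor:fund_1_gam}.

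First we handle the base of the filtration. By \cref{prop:norm_gam}, $N_{n+1,n}$ sends $c_{n+1}^{g(\gamma)}$ to $\pm c_n^{g(\gamma)}$ and $\eta_{n+1}^{g(\gamma)}$ to $\pm\eta_n^{g(\gamma)}$. On $\Et{n}$ it acts as squaring. Hence
\[
  N_{n+1,n}(\Vt{n+1}{0})\subseteq(\Et{n})^2\cdot C_n\cdot H_n .
\]
By \cref{lem:et_index}, \cref{lem:etc} and \cref{thm:fund_1}, this should refine to $N_{n+1,n}(\Vt{n+1}{0})\subseteq\Vt{n}{0}$, compatible with the maps $\pi_{n+1}$ and $\pi_n$. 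Concretely, for $v\in\Vt{n+1}{0}$ we expect $\pi_n(N_{n+1,n}v)$ to be the reduction of $\pi_{n+1}(v)$ modulo $I^{2^{n-1}}$, up to the contribution of the $\Et{n}$-component. That component becomes a square under the norm, so it only matters through whether it is a square of something in $\Vt{n}{0}$.

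Next comes the key computation. Let $w\in\Vt{n+1}{j}\smallsetminus\Vt{n+1}{j-1}$ and set $u=w^{1+\gamma^{2^n}}=N_{n+1,n}(w)\in E_n$. The aim is to show $u\in\Et{n}$ and to locate $u$ in the $n$-th filtration.
\begin{itemize}
  \item For $j\le 2^{n}$, \cref{thm:fund_1} at layer $n+1$ gives $w^2\in\Vt{n+1}{0}$ with $\pi_{n+1}(w^2)\in I^{2^n-j}\smallsetminus I^{2^n-j+1}$.
  \item Since $j\le\at{n+1}-\at{n}\le$ (the claimed bound) $<2^{n-1}\le 2^n$, \cref{thm:fund_1} indeed applies.
  \item Taking norms, $u^2=N(w^2)\in\Vt{n}{0}\cap(\Et{n})^2$.
  \item Because $(1+X)^{2^n}\equiv 0$ modulo $I^{2^n}$, the relation $\pi_{n+1}(w^2)\in I^{2^n-j}\smallsetminus I^{2^n-j+1}$ should transfer to $\pi_n(u^2)\in I^{2^{n-1}-j}\smallsetminus I^{2^{n-1}-j+1}$ whenever $j\le 2^{n-1}$.
\end{itemize}
\cref{thm:fund_1}(\labelcref{it:fund_1_pi}) at layer $n$ then yields $u\in\Vt{n}{j}\smallsetminus\Vt{n}{j-1}$, provided $j\le d$. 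The genuine content is therefore the inequality $j\le d$: the $n$-th layer filtration must not terminate before the $(n+1)$-st.

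The inequality $j\le d$ is the main obstacle. The plan is to prove it by contradiction inside the same induction. Suppose $\at{n+1}-\at{n}>d$, and take $j=d+1$ and $w=\sqrt{\vt{n+1}{d+1}}$. The computation above produces $u=N_{n+1,n}(w)\in E_n$ with $\sqrt{u^2}=u$ and $\pi_n(u^2)\in I^{2^{n-1}-d-1}\smallsetminus I^{2^{n-1}-d}$. Since $d<2^{n-1}$, this exponent is nonnegative, so the statement is meaningful. We then show that $u$, or a modification of it by the equivariance from \cref{cor:fund_1_gam_inv}, lies in $\Et{n}$. This uses the fact that $\Et{n}$ contains all units of $E_n$ whose index-$2$-power class is trivial, together with $\Et{n}\cap(\Et{n+1})^2=(\Et{n})^2$ from \cref{lem:et_index}. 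It then gives an element of $\Et{n}$ whose square lies in $\Vt{n}{0}$ with $\pi_n$-level exceeding what $\Vt{n}{d}=\Et{n}$ allows by \cref{thm:fund_1}(\labelcref{it:fund_1_pi}), a contradiction.

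Care is needed in two places. One is the $\pm$ signs from \cref{prop:norm_gam}; these are absorbed because $-1\in\Vt{n}{0}$. The other is showing that the norm of the $\Et{n}$-part contributes only squares of elements of $\Vt{n}{0}$; here we expect to use \cref{lem:etc} and total positivity, as in its proof.
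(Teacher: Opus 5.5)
Your central step fails. For $j\le 2^{n-1}$ you have $2^n-j\ge 2^{n-1}$, so $I^{2^n-j}\subseteq I^{2^{n-1}}$, and the reduction of $\pi_{n+1}(w^2)$ modulo $I^{2^{n-1}}$ is $0$. Nothing of level $2^{n-1}-j$ gets ``transferred''.

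Concretely, write $w^2=u_0c_{n+1}^{f_0(\gamma)}\eta_{n+1}^{g_0(\gamma)}$ with $u_0\in\Et{n}$. Then $u^2=\pm u_0^2c_n^{f_0(\gamma)}\eta_n^{g_0(\gamma)}$. Since $g_0\bmod 2\in I^{2^n-j}\subseteq I^{2^{n-1}}$, \cref{lem:pivf} gives $\pi_n(u^2)=\pi_n(u_0^2)$. So the $H_{n+1}$-part of $w^2$, which carries all of $\pi_{n+1}(w^2)$, contributes nothing. The position of $u$ in the $n$-th filtration is decided entirely by the $\Et{n}$-component $u_0$.

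Your plan to show that this component ``contributes only squares of elements of $\Vt{n}{0}$'' would therefore give $\pi_n(u_0^2)=0$. Then $\pi_n(u^2)=0$ and $u\in\Vt{n}{0}$ by \cref{lem:piv_0}, which is the negation of $u\notin\Vt{n}{j-1}$. Your contradiction argument for $j\le d$ rests on the same computation, so it falls too. Also, $j\le d$ is not the real difficulty: once $u\in\Et{n}\smallsetminus\Vt{n}{j-1}$ is known, it is immediate.

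The missing idea is to use $w^{1+\gamma}$ as well, in order to show $\pi_n(u_0^2)\in I^{2^{n-1}-1}\smallsetminus\{0\}$ for $j=1$.
\begin{enumerate}
\item Write $w^{1+\gamma}=u_1c_{n+1}^{f_1(\gamma)}\eta_{n+1}^{g_1(\gamma)}$. Since $g_0\equiv(1+X)^{2^n-1}\pmod 2$, we have $(1+X)g_0=1+X^{2^n}+2h$. Hence $(w^2)^{1+\gamma}$ contains $\eta_{n+1}^{1+\gamma^{2^n}}=\pm\eta_n$, which lies in the $\Et{n}$-factor of $\Vt{n+1}{0}$.
\item Projecting $(w^{1+\gamma})^2=(w^2)^{1+\gamma}$ to $\Et{n}$ gives
\[
  u_1^2=\pm u_0^{1+\gamma}w'\eta_n^{1+2h'(\gamma)},\qquad w'\in\Et{n-1}\times C_n .
\]
\item This yields $u_0^{1+\gamma}\in\Vt{n}{0}$, hence $(1+X)\pi_n(u_0^2)=0$.
\item If $\pi_n(u_0^2)=0$, then $u_0\in\Vt{n}{0}$ and $\pi_n(u_1^2)\notin I$. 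By \cref{thm:fund_1} this contradicts $u_1\in\Et{n}=\Vt{n}{d}$, precisely because $d<2^{n-1}$.
\end{enumerate}
This is where the hypothesis $\at{n}-\at{n-1}<2^{n-1}$ is genuinely used; in your sketch it only keeps an exponent non-negative.

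For $j\ge2$ no new computation is needed. The operator $1+\gamma^{2^n}$ commutes with $1+\gamma$. So \cref{cor:fund_1_gam} at layer $n+1$ together with the induction hypothesis gives $u^{1+\gamma}\in\Vt{n}{j-1}\smallsetminus\Vt{n}{j-2}$. This forces $u\notin\Vt{n}{j-1}$, hence $j\le d$. Then \cref{cor:fund_1_gam_inv} at layer $n$ gives $u\in\Vt{n}{j}$.
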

\begin{proof}
  Remark that
  $(\Vt{n+1}{j})^2\subseteq(\Et{n+1})^2\subseteq E''_{n+1}$,
  hence we have
  \[
    (\Vt{n+1}{j})^{2+2\gamma^{2^n}}\subseteq E''_n\cap(\Et{n+1})^2\subseteq(\Et{n})^2=(\Vt{n}{\at{n}-\at{n-1}})^2\subseteq\Vt{n}{0}
  \]
  from \cref{lem:et_index,thm:fund_1}.

  We shall show the theorem by
  induction on $j$.
  First we show the case $j=1$ (harder than the other cases).
  Let $v\in\Vt{n+1}{1}\smallsetminus\Vt{n+1}{0}$.
  Since $v^2,v^{1+\gamma}\in\Vt{n+1}{0}$,
  we write
  \[
    v^2=u_0c_{n+1}^{f_0(\gamma)}\eta_{n+1}^{g_0(\gamma)},\quad
    v^{1+\gamma}=u_1c_{n+1}^{f_1(\gamma)}\eta_{n+1}^{g_1(\gamma)}
    \label{eq:v2_vg}
  \]
  with some $u_0,u_1\in\Et{n}$ and $f_0,g_0,f_1,g_1\in\Z[X]$ of degrees less than $2^n$.
  From \cref{thm:fund_1} we have
  $\pi_{n+1}(v^2)\in I^{2^n-1}/I^{2^n}$ and
  $\pi_{n+1}(v^2)\ne0$,
  which means $g_0\bmod2\in I^{2^n-1}\smallsetminus I^{2^n}$.
  Also we obtain
  \[
    v^{2+2\gamma^{2^n}}=\pm u_0^2c_n^{f_0(\gamma)}\eta_n^{g_0(\gamma)}\in\Vt{n}{0}
  \]
  using \cref{prop:norm_gam}.
  Since $g_0\bmod2\in I^{2^n-1}\subseteq I^{2^{n-1}}$ and
  $c_n^{f_0(\gamma)}\in\Et{n-1}\times C_n$,
  we obtain
  $\pi_n(v^{2+2\gamma^{2^n}})=\pi_n(u_0^2)$
  from \cref{lem:pivf}.
  Therefore from \cref{thm:fund_1},
  our aim is to show that $\pi_n(u_0^2)\in I^{2^{n-1}-1}/I^{2^{n-1}}$ and
  $\pi_n(u_0^2)\ne0$.

  Since $g_0\bmod2\in I^{2^n-1}\smallsetminus I^{2^n}$ and $\deg g_0<2^n$,
  we write
  \[
    (1+X)g_0(X)=1+X^{2^n}+2h(X)
  \]
  with some $h\in\Z[X]$.
  Hence from \cref{eq:v2_vg}, we have
  \[
    u_1^2c_{n+1}^{2f_1(\gamma)}\eta_{n+1}^{2g_1(\gamma)}
    =u_0^{1+\gamma}c_{n+1}^{(1+\gamma)f_0(\gamma)}\eta_{n+1}^{(1+\gamma)g_0(\gamma)}
    =\pm u_0^{1+\gamma}c_{n+1}^{(1+\gamma)f_0(\gamma)}\eta_n\eta_{n+1}^{2h(\gamma)}
  \]
  in $\Vt{n+1}{0}=\Et{n}\times C_{n+1}\times H_{n+1}$.
  Taking the projection to $\Et{n}$, we obtain
  \[
    u_1^2=\pm u_0^{1+\gamma}w\eta_n^{1+2h'(\gamma)},
    \label{eq:u12}
  \]
  where
  $w\in\Et{n-1}\times C_n$ is the projection of $c_{n+1}^{(1+\gamma)f_0(\gamma)}$,
  and $h'\in\Z[X]$.
  Since $u_1^2\in(\Et{n})^2\subseteq\Vt{n}{0}$ as above,
  we have $u_0^{1+\gamma}\in\Vt{n}{0}$.
  Hence from \cref{lem:pivf},
  \[
    (1+X)\pi_n(u_0^2)=\pi_n(u_0^{2+2\gamma})=2\pi_n(u_0^{1+\gamma})=0,
  \]
  which yields $\pi_n(u_0^2)\in I^{2^{n-1}-1}/I^{2^{n-1}}$.

  Suppose $\pi_n(u_0^2)=0$.
  Then we have
  $u_0\in\Vt{n}{0}$
  from \cref{lem:piv_0}.
  Hence from \cref{eq:u12,lem:pivf},
  we obtain
  $\pi_n(u_1^2)\notin I/I^{2^{n-1}}$.
  Since $\at{n}-\at{n-1}<2^{n-1}$,
  it yields
  $\pi_n(u_1^2)\notin I^{2^{n-1}-(\at{n}-\at{n-1})}/I^{2^{n-1}}$,
  which means
  $u_1\notin\Vt{n}{\at{n}-\at{n-1}}=\Et{n}$
  from \cref{thm:fund_1}, a contradiction.
  Therefore $\pi_n(u_0^2)\ne0$.
  Note that this yields $\Vt{n}{0}\subsetneq\Et{n}$,
  hence we have $1\le\at{n}-\at{n-1}$.

  Let $j\ge2$ and assume that the theorem holds for $j-1$.
  Let $v\in\Vt{n+1}{j}\smallsetminus\Vt{n+1}{j-1}$.
  Then $v^{1+\gamma}\in\Vt{n+1}{j-1}\smallsetminus\Vt{n+1}{j-2}$
  from \cref{cor:fund_1_gam}.
  Hence we have
  \[
    (v^{1+\gamma^{2^n}})^{1+\gamma}=(v^{1+\gamma})^{1+\gamma^{2^n}}\in\Vt{n}{j-1}\smallsetminus\Vt{n}{j-2}
  \]
  from the induction hypothesis.
  Here $(v^{1+\gamma^{2^n}})^{1+\gamma}\notin\Vt{n}{j-2}$ yields
  $v^{1+\gamma^{2^n}}\notin\Vt{n}{j-1}$,
  which shows $\Vt{n}{j-1}\subsetneq\Et{n}$.
  This means $j\le\at{n}-\at{n-1}$.
  Also we obtain $v^{1+\gamma^{2^n}}\in\Vt{n}{j}\smallsetminus\Vt{n}{j-1}$
  from \cref{cor:fund_1_gam_inv}.
\end{proof}

From the above,
we obtain the following generalization of
\cite[Corollary~4.4]{fukudakomatsu_funct_2014}.
\begin{corollary}
  \label{cor:fkot}
  Let $n\ge1$ and assume
  $a_n-a_{n-1}<2^{n-1}$.
  Then $a_i-a_{i-1}\ge a_{i+1}-a_i$ for all $i\ge n$.
\end{corollary}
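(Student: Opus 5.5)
The plan is to transfer the statement to the shifted invariants $\at{i}$, apply \cref{thm:fund_2} repeatedly, and then translate back to $a_i$ by a short case analysis on $\delta$. Throughout, write $d_i=\at{i}-\at{i-1}$, so that
\[
  a_i-a_{i-1}=d_i-\delta_i+\delta_{i-1}.
\]

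\emph{Step 1: transfer the hypothesis.} By \cref{prop:index_gam}~(\labelcref{it:index_dec},\labelcref{it:index_pp}), the sequence $(\delta_i)$ is non-increasing. Hence $\delta_n-\delta_{n-1}\le0$, and so
\[
  d_n\le a_n-a_{n-1}<2^{n-1}.
\]

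\emph{Step 2: the shifted differences are non-increasing from $n$ on.} I will show by induction on $i\ge n$ that $d_i<2^{i-1}$ and $d_{i+1}\le d_i$. Given $d_i<2^{i-1}$, \cref{thm:fund_2} (its first assertion, $j\le\at{i}-\at{i-1}$ for every $1\le j\le\at{i+1}-\at{i}$) gives $d_{i+1}\le d_i$. Then
\[
  d_{i+1}\le d_i<2^{i-1}<2^{i},
\]
so the hypothesis of \cref{thm:fund_2} propagates to $i+1$.

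\emph{Step 3: translate back to $a_i$.} Fix $i\ge n$. We want
\[
  d_i-\delta_i+\delta_{i-1}\ge d_{i+1}-\delta_{i+1}+\delta_i .
\]
Since $\delta$ is non-increasing, the triple $(\delta_{i-1},\delta_i,\delta_{i+1})$ is one of $(1,1,1)$, $(1,0,0)$, $(0,0,0)$, $(1,1,0)$.
\begin{itemize}
  \item For $(1,1,1)$ and $(0,0,0)$ the inequality reduces to $d_i\ge d_{i+1}$.
  \item For $(1,0,0)$ it reads $d_i+1\ge d_{i+1}$, which is weaker.
  \item For $(1,1,0)$ it becomes $d_i\ge d_{i+1}+1$, so a strict decrease is needed.
\end{itemize}
This last case is the main obstacle.

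\emph{Step 4: the strict decrease when $\delta_i=1$ and $\delta_{i+1}=0$.} Since $\delta_i=1$, we have $\Vt{i}{d_i-1}=E''_i$ and $\Et{i}=\braket{E''_i,\sqrt{v_{i+1}^{(1)}}}$, with $\sqrt{v_{i+1}^{(1)}}\notin k_i$; in particular $d_i\ge1$. Suppose for contradiction that $d_{i+1}=d_i=:d$. Take $v\in\Vt{i+1}{d}\smallsetminus\Vt{i+1}{d-1}$. \cref{thm:fund_2} with $j=d$ gives
\[
  v^{1+\gamma^{2^i}}\in\Et{i}\smallsetminus E''_i .
\]
On the other hand, $\delta_{i+1}=0$ means $\Et{i+1}=E''_{i+1}\subseteq k_{i+1}$. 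So $v\in k_{i+1}$, and $v^{1+\gamma^{2^i}}=N_{i+1,i}(v)$ lies in $E_i$. Every element of $\Et{i}\smallsetminus E''_i$ has the form $u\sqrt{v_{i+1}^{(1)}}$ with $u\in E''_i$, and such an element is not in $k_i$. This is a contradiction, so $d_{i+1}<d_i$, which finishes case $(1,1,0)$ and the proof.
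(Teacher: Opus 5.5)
Your proof is correct and follows essentially the same route as the paper's: transfer the hypothesis to $\at{i}-\at{i-1}$ using the monotonicity of $\delta$, apply \cref{thm:fund_2} to get $\at{i+1}-\at{i}\le\at{i}-\at{i-1}$, and rule out the only bad case $(\delta_{i-1},\delta_i,\delta_{i+1})=(1,1,0)$ by noting that $N_{i+1,i}$ of an element of $E''_{i+1}$ lies in $k_i$, so it cannot lie in $\Et{i}\smallsetminus E''_i$. The differences are cosmetic: you propagate the bound $d_i<2^{i-1}$ by induction instead of reducing to $i=n$, and you get $d_i\ge1$ from the definition of $\Vt{i}{\cdot}$ (really \cref{lem:vt}, part (1)) where the paper invokes Fukuda's theorem.
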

\begin{proof}
  It suffices to show $a_n-a_{n-1}\ge a_{n+1}-a_n$.
  The sequence $(\delta_n)_n$ is non-increasing
  from \cref{prop:index_gam}~(\labelcref{it:index_dec}).
  Hence we have
  \[
    \at{n}-\at{n-1}\le a_n-a_{n-1}<2^{n-1}.
  \]
  From \cref{thm:fund_2} we obtain
  \(
    \at{n+1}-\at{n}\le\at{n}-\at{n-1}
  \), or equivalently,
  \[
    a_{n+1}-a_n\le a_n-a_{n-1}-\delta_{n+1}+2\delta_n-\delta_{n-1}.
  \]
  If $\delta_{n+1}=\delta_n$, this yields $a_n-a_{n-1}\ge a_{n+1}-a_n$.
  So we assume $\delta_{n-1}=\delta_n=1$ and $\delta_{n+1}=0$,
  and also assume that the above inequality is an equality.
  Note that we have $a_n>a_{n-1}$ from Fukuda's theorem~\cite{fukuda_japan_1994}.
  Using \cref{thm:fund_2} for $j=\at{n+1}-\at{n}=a_{n+1}-a_n-1=a_n-a_{n-1}$,
  we obtain
  \begin{align}
    N_{n+1,n}(E''_{n+1}\smallsetminus V_{n+1}^{(a_{n+1}-a_n-1)})
    &=(\Vt{n+1}{a_{n+1}-a_n-1}\smallsetminus\Vt{n+1}{a_{n+1}-a_n-2})^{1+\gamma^{2^n}}\\
    &\subseteq\Vt{n}{a_n-a_{n-1}}\smallsetminus\Vt{n}{a_n-a_{n-1}-1}
    =\Et{n}\smallsetminus E''_n,
  \end{align}
  which is a contradiction.
  Hence the above inequality is strict,
  which shows our claim.
\end{proof}

In \cite[Theorem~4.1]{fukudakomatsuozakitsuji_funct_2016},
they showed that
for any $\Z_l$-extension of any number field,
if $a_n-a_{n-1}<l^n-l^{n-1}$ then
the $\lambda$-invariant is at most $a_n-a_{n-1}$.
The above corollary yields a stronger conclusion,
although it applies only for $\Z_2$-extensions of real quadratic fields.

As examples which satisfy the assumption of
the above corollary,
we provide the following remarkable cases.
\begin{corollary}
  \label{cor:p_ne_1}
  Let $n\ge1$.
  If $m=p$ is a prime with $p\not\equiv1\pmod{2^{n+2}}$,
  then $a_n-a_{n-1}<2^{n-1}$
  (and hence $a_i-a_{i-1}\ge a_{i+1}-a_i$ for all $i\ge n$ from \cref{cor:fkot}).
\end{corollary}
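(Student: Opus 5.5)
We argue by contradiction. Since $p\equiv1\pmod8$, in particular $m=p\equiv1\pmod4$, \cref{prop:index_gam}~(\labelcref{it:index_mod}) gives $\delta_i=0$ for all $i$. So all the tilded objects coincide with the untilded ones: $\at{i}=a_i$, $\Et{i}=E''_i$, $\Vt{n}{j}=V_n^{(j)}$. Suppose $a_n-a_{n-1}\ge2^{n-1}$. Then \cref{cor:fund_1_eta} applies with $j=2^{n-1}$. It says $v:=v_n^{(2^{n-1})}=u\eta_n$ with $u\in E''_{n-1}\times C_n$, because $g\equiv(1+X)^0=1$. By \cref{lem:vt} we have $v\in(E''_n)^2$. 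The plan is to apply the norm $1+\tau$ down to $\B_n$ and obtain a contradiction in $E_{\B_n}/E_{\B_n}^2$.

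First I would describe $E_{\B_n}/E_{\B_n}^2$. By \cite[Theorem~8.2]{washington_gtm_1997} the cyclotomic unit group $C=\braket{-1}\times C_1\times\dots\times C_n$ has odd index $h_{\B_n}$ in $E_{\B_n}$. Hence $E_{\B_n}/E_{\B_n}^2\simeq C/C^2$, with $\F_2$-basis $-1$ together with the $c_i^{\gamma^k}$ for $0\le k<2^{i-1}$. Using \cref{prop:norm_gam}, i.e. $c_n^{1+\gamma^{2^{n-1}}}=-c_{n-1}$, the $C_n$-coordinate of $c_n^{f(\gamma)}$ modulo squares and modulo $\braket{-1}\times C_1\times\dots\times C_{n-1}$ is $f\bmod(2,1+X^{2^{n-1}})$. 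Thus it lives in $\F_2[X]/I^{2^{n-1}}$, exactly as in \cref{lem:pivf}.

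Next I would compute $v^{1+\tau}=u^{1+\tau}\eta_n^{1+\tau}$.
\begin{itemize}
\item For the $E''_{n-1}$-part of $u$, its image under $1+\tau$ lies in $E_{\B_{n-1}}$. Modulo $E_{\B_n}^2$ it therefore contributes nothing to the $C_n$-coordinate.
\item For the $C_n$-part $c_n^{f(\gamma)}$ of $u$, we get $c_n^{2f(\gamma)}$, which is a square.
\item By \cref{prop:norm_tau}, $\eta_n^{1+\tau}=c_n^{\gamma^{-1}+\dots+\gamma^{-r}}$, where $p\equiv\pm3^{r}\pmod{2^{n+3}}$.
\end{itemize}
Since $v$ is a square in $E_n$, $v^{1+\tau}\in E_{\B_n}\cap E_n^2=E_{\B_n}^2$ by \cref{prop:index_tau}. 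Projecting to the $C_n$-coordinate, and replacing $\gamma^{-1}$ by $\gamma^{2^{n+1}-1}$ (which is harmless since $\gamma^{2^{n-1}}$ acts on $C_n$ up to sign), this forces $X^{-1}(1+X^{-1}+\dots+X^{-(r-1)})\equiv0$ in $\F_2[X]/I^{2^{n-1}}$. Equivalently, $(1+X)^{2^{n-1}}$ divides $(1+X^r)/(1+X)$ in $\F_2[X]$.

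Finally the arithmetic. Write $r=2^e r'$ with $r'$ odd. Then $1+X^r=(1+X^{r'})^{2^e}$, and $1+X^{r'}$ is divisible by $1+X$ exactly once. So the $(1+X)$-adic valuation of $(1+X^r)/(1+X)$ is $2^e-1$, and divisibility requires $2^e-1\ge2^{n-1}$, i.e.\ $e\ge n$. On the other hand, $3$ has order $2^{n+1}$ modulo $2^{n+3}$, and $3^{2^n}\equiv1\pmod{2^{n+2}}$. If $2^n\mid r$, then $p\equiv\pm1\pmod{2^{n+2}}$, and $p\equiv1\pmod8$ excludes $-1$, so $p\equiv1\pmod{2^{n+2}}$, against the hypothesis. (If $r=0$ then trivially $p\equiv1$.) This is the contradiction.

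The main obstacle is the bookkeeping in the projection step. One must check that neither the signs nor the contribution of $u^{1+\tau}$ leak into the $C_n$-coordinate. This rests on the directness of the decomposition of $C/C^2$ and on $(E''_{n-1})^{1+\tau}\subseteq E_{\B_{n-1}}$. I would make this precise via the odd index $h_{\B_n}$, as in the proof of \cref{lem:key_tau}.
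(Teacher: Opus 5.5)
Your proof is correct and follows the paper's argument: assume $a_n-a_{n-1}\ge2^{n-1}$, use \cref{cor:fund_1_eta} to write $v_n^{(2^{n-1})}=u\eta_n$, apply $1+\tau$ together with \cref{prop:norm_tau}, deduce that $X^{-1}+\dots+X^{-r}$ must vanish in $\F_2[X]/I^{2^{n-1}}$, and read off $2^n\mid r$ (hence $p\equiv1\pmod{2^{n+2}}$) from the $(1+X)$-adic valuation of $1+X^r$. The one difference is how you kill the $C_n$-component. The paper applies \cref{lem:etc} inside $E''_{n-1}\times C_n$, a sign argument using $N_{n,0}(c_n)=-1$. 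You instead pass to $E_{\B_n}/E_{\B_n}^2\simeq C/C^2$ via the odd index of the cyclotomic units, which works equally well; \cref{prop:index_tau} is not even needed, since $v=w^2$ gives $v^{1+\tau}=(w^{1+\tau})^2$ with $w^{1+\tau}\in E_{\B_n}$ directly.
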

\begin{proof}
  We can assume $n\ge2$
  (and $p\equiv1\pmod8$).
  From \cref{prop:norm_tau},
  we have
  \[
    \eta_n^{1+\tau}=c_n^{\gamma^{-1}+\gamma^{-2}+\dots+\gamma^{-r_p}}
    =c_n^{g(\gamma)},
  \]
  where $r_p$ is the integer satisfying
  $0\le r_p<2^{n+1}$ and $p\equiv\pm3^{r_p}\ (2^{n+3})$,
  and
  \[
    g(X)=X^{2^{n+1}-1}+X^{2^{n+1}-2}+\dots+X^{2^{n+1}-r_p}\in\Z[X].
  \]
  We write
  \[
    g(X)=(1+X^{2^{n-1}})h_1(X)+h_0(X)
  \]
  with some $h_0,h_1\in\Z[X]$ and $\deg h_0<2^{n-1}$.

  Assume $a_n-a_{n-1}\ge2^{n-1}$.
  Then, from \cref{cor:fund_1_eta},
  we can write
  \(
    v_n^{(2^{n-1})}=uc_n^{f(\gamma)}\eta_n
  \)
  with some $u\in E''_{n-1}$ and $f\in\Z[X]$ with $\deg f<2^{n-1}$.
  Hence,
  \[
    (v_n^{(2^{n-1})})^{1+\tau}=u^{1+\tau}c_n^{2f(\gamma)+g(\gamma)}
    =u^{1+\tau}(-c_{n-1})^{h_1(\gamma)}c_n^{2f(\gamma)+h_0(\gamma)}
  \]
  is square in $E''_n$.
  Therefore $h_0\equiv0\pmod2$ must hold
  from \cref{lem:etc}.
  This yields $g(X)\equiv(1+X)^{2^{n-1}}h_1(X)\pmod2$,
  and hence
  \[
    X^{2^{n+1}-r_p}(1+X^{r_p})\equiv(1+X)g(X)\equiv(1+X)^{2^{n-1}+1}h_1(X)\pmod2.
  \]
  Hence $(1+X^{r_p})\bmod2$
  belongs to $I^{2^{n-1}+1}$.

  Assume that $r_p$ is not divisible by $2^n$,
  and let $j\le n-1$ be the $2$-adic valuation of $r_p$.
  Since $r_p-2^j$ is divisible by $2^{j+1}$,
  we have $X^{r-2^j}\equiv1\pmod{I^{2^{j+1}}}$.
  Hence we obtain
  \begin{align}
    (1+X)^{2^j}\bmod2&=(1+X^{r_p}+X^{2^j}-X^{r_p})\bmod2\\
    &\in I^{2^{n-1}+1}+I^{2^{j+1}}
    \subseteq I^{2^j+1},
  \end{align}
  which is a contradiction.
  Therefore $r_p$ is divisible by $2^n$,
  which yields
  $3^{r_p}\equiv1\pmod{2^{n+2}}$.
\end{proof}

\section{Determining whether $v$ is square}
\label{sect:sq}

In \cref{sect:iter},
we explained the outline of our iteration,
without how to determine whether
a given unit $v$ is square in $E_n$.
The following theorem is our fundamental theorem
for this purpose.
\begin{theorem}
  \label{thm:main}
  If $m$ is a prime (with $m\equiv1\pmod8$),
  then for $v\in E_n$,
  $v\in E_n^2$ if and only if
  $v$ satisfies all the following conditions:
  \begin{enumerate}
    \item $v^{1+\gamma}\in E_n^2$,
    \label{it:main_1}
    \item $v^{1+\tau}\in E_n^2$,
    \label{it:main_2}
    \item $\sqrt{v^{1+\gamma}}^{1+\tau}=\sqrt{v^{1+\tau}}^{1+\gamma}$,
    \label{it:main_3}
    \item $v>0$.
    \label{it:main_4}
  \end{enumerate}

  Furthermore,
  if $m$ is not necessarily a prime,
  let $p_1,\dots,p_t$ be the prime divisors of $m$.
  Take prime ideals $\mathfrak{q}_1,\dots,\mathfrak{q}_t$ of $k_n$
  so that
  they split in $k_n/\Q$ and
  the Artin symbols
  \[
    \jac{k_{n+1}(\sqrt{p_1},\dots,\sqrt{p_t})/k_n}{\mathfrak{q}_i}
    \quad(i=1,...,t)
  \]
  span the whole group $\Gal(k_{n+1}(\sqrt{p_1},\dots,\sqrt{p_t})/k_n)$.
  Then for $v\in E_n$,
  $v\in E_n^2$ if and only if
  $v$ satisfies the above (\labelcref{it:main_1})--(\labelcref{it:main_4}) and
  additionally the following condition:
  \begin{enumerate}
    \setcounter{enumi}{4}
    \item $v$ is square modulo $\mathfrak{q}_i$ for all $i=1,\dots,t$.
    \label{it:main_5}
  \end{enumerate}
\end{theorem}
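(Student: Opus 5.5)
The plan is Kummer theory for the quadratic extension $L=k_n(\sqrt v)$. Necessity is immediate. If $v=w^2$ with $w\in E_n$, then $v>0$, and $v^{1+\gamma}=(w^{1+\gamma})^2$, $v^{1+\tau}=(w^{1+\tau})^2$. Choosing $\sqrt{v^{1+\gamma}}=w^{1+\gamma}$ and $\sqrt{v^{1+\tau}}=w^{1+\tau}$, both sides of (\labelcref{it:main_3}) equal $w^{(1+\gamma)(1+\tau)}$. Every $\mathfrak{q}_i$ condition then holds trivially. For sufficiency, assume the conditions and put $w=\sqrt v$, $a=\sqrt{v^{1+\gamma}}\in E_n$, $b=\sqrt{v^{1+\tau}}\in E_n$. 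I will show $L$ is abelian over $\Q$, locate it inside an explicit field, and then show $L=k_n$.

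\emph{Step 1: $L/\Q$ is abelian and totally real.} Since $v^\gamma=a^2/v$ and $v^\tau=b^2/v$, the conjugates $v^\gamma,v^\tau$ are squares in $L$, so $L/\Q$ is Galois. Lifts are given by $\tilde\gamma(w)=a/w$ and $\tilde\tau(w)=b/w$. Then
\[
  \tilde\gamma\tilde\tau(w)=b^\gamma w/a,\qquad \tilde\tau\tilde\gamma(w)=a^\tau w/b .
\]
These agree exactly when $a^{1+\tau}=b^{1+\gamma}$, which is condition (\labelcref{it:main_3}). This is insensitive to the signs of $a,b$, since $(-a)^{1+\tau}=a^{1+\tau}$. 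The nontrivial element of $\Gal(L/k_n)$, $w\mapsto-w$, commutes with both lifts. Since $\Gal(L/\Q)$ is generated by $\tilde\gamma$, $\tilde\tau$ and this element, it is abelian. For the signs: $v>0$ and $v^{1+\gamma}$ being a square give $v^{\gamma^i}>0$ for all $i$. Likewise $v^{1+\tau}$ being a square gives $v^{\tau\gamma^i}>0$. So $v$ is totally positive and $L$ is totally real.

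\emph{Step 2: $L\subseteq k_{n+1}(\sqrt{p_1},\dots,\sqrt{p_t})$.} Since $v$ is a unit, $L/k_n$ is unramified outside $2$ and $\infty$. Hence for each odd $p\mid m$ the inertia group of $p$ in the abelian extension $L/\Q$ has order $2$. By Kronecker--Weber together with the conductor/inertia correspondence, $L$ lies in the maximal real subfield of $\Q(\zeta_{2^N},\sqrt{p_1},\dots,\sqrt{p_t})$ for some $N$, namely $\B_N(\sqrt{p_1},\dots,\sqrt{p_t})$. The group $\Gal(\B_N(\sqrt{p_i})/k_n)$ is $\Z/2^{N-n}\times(\Z/2)^{t-1}$. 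The index-$\le2$ subgroup $\Gal(\B_N(\sqrt{p_i})/L)$ contains all squares, and the fixed field of the squares is $k_{n+1}(\sqrt{p_1},\dots,\sqrt{p_t})=:M$. Hence $L\subseteq M$.

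\emph{Step 3: conclude.} If $m=p$ is prime, then $M=k_{n+1}$, so $v\in E_n\cap E_{n+1}^2$. Since $p\equiv1\pmod4$, \cref{prop:index_gam}~(\labelcref{it:index_mod}) gives $v\in E_n^2$. In general, condition (\labelcref{it:main_5}) says each $\mathfrak{q}_i$ splits in $L/k_n$. Thus every Frobenius $\jac{M/k_n}{\mathfrak{q}_i}$ lies in $\Gal(M/L)$. These generate $\Gal(M/k_n)$, so $L=k_n$, i.e.\ $v\in E_n^2$.

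The main obstacle is Step 2: the ramification bookkeeping showing that an abelian, totally real $L$ whose odd inertia has order $2$ sits in $\B_N(\sqrt{p_i})$. The care needed is in handling the inertia at each $p_i$ via the character group, and in checking that the $2$-part cannot exceed $\B_{n+1}$. The latter is exactly the squares argument in Step 2.
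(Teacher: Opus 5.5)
Your proof is correct, but it reaches the key containment by a different route from the paper's.

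\emph{The paper's route.} The paper takes $L=k_{n+1}(\sqrt v)$ rather than $k_n(\sqrt v)$. It computes that the lifts satisfy $\widetilde{\gamma}^{2^{n+1}}=\widetilde{\tau}^2=1$; the first uses $v^{\gamma^{2^n}}=v$, the second uses \cref{prop:index_tau}. This gives the splitting $\Gal(L/\Q)=\langle\widetilde{\gamma},\widetilde{\tau}\rangle\times\langle\sigma\rangle$. The fixed field $F$ of $\langle\widetilde{\gamma},\widetilde{\tau}\rangle$ is then a real field of degree at most $2$ with $L=k_{n+1}F$ and $F\cap k_{n+1}=\Q$. The only arithmetic input needed afterwards is elementary: a real quadratic field unramified outside $2m$ lies in $\Q(\sqrt2,\sqrt{p_1},\dots,\sqrt{p_t})$.

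\emph{Your route.} You keep $L=k_n(\sqrt v)$ and skip the order computation entirely. Kronecker--Weber plus the inertia/character correspondence (inertia of order $2$ at each odd $p_i$) places $L$ inside $\B_N(\sqrt{p_1},\dots,\sqrt{p_t})$. Then the fact that an index-$\le2$ subgroup of the abelian group $\Gal(\B_N(\sqrt{p_1},\dots,\sqrt{p_t})/k_n)$ contains all squares gives $L\subseteq k_{n+1}(\sqrt{p_1},\dots,\sqrt{p_t})$.

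\emph{Trade-offs.} Your route uses heavier machinery, but it absorbs the content of \cref{prop:index_tau} and of $v^{\gamma^{2^n}}=v$ into a single ramification bound. You therefore never need the orders of the lifts or a complement $F$. The paper's route is more self-contained, using only Kummer theory and quadratic fields. Both arguments finish the same way: via \cref{prop:index_gam}(2) in the prime case, and via the Artin-symbol hypothesis in general.

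One small point: normality of $L$ needs every conjugate $v^{\gamma^i}$ and $v^{\tau\gamma^i}$ to be a square in $L$, not just $v^\gamma$ and $v^\tau$. This follows by induction from $v^{\gamma^{i+1}}=(a^{\gamma^i})^2/v^{\gamma^i}$ and $v^{\tau\gamma^i}=(b^{\gamma^i})^2/v^{\gamma^i}$. The paper is equally terse here.
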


Remark that the equality of (\labelcref{it:main_3})
holds if both sides are squared,
hence only their signs matter.

To prove the theorem, we show the following lemma.
\begin{lemma}
  \label{lem:total}
  Let $v$ be an element of $k_n$.
  If both $v^{1+\gamma}$ and $v^{1+\tau}$ are totally positive,
  then $v$ is totally positive or totally negative.
\end{lemma}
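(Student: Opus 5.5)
The plan is to translate the hypothesis into a statement about sign vectors on the real embeddings of $k_n$. The field $k_n$ is totally real of degree $2^{n+1}$. Its Galois group $G=\Gal(k_n/\Q)=\braket{\gamma}\times\braket{\tau}$ is isomorphic to $\Z/2^n\times\Z/2$, and it acts simply transitively on the real embeddings. Fix one embedding $\iota$. For $\sigma\in G$ put $s(\sigma)=\sgn\iota(v^\sigma)\in\{\pm1\}$; we may assume $v\ne0$. Then $v$ is totally positive (resp.\ totally negative) exactly when $s$ is constantly $1$ (resp.\ constantly $-1$). So the goal is to show that $s$ is constant on $G$.

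The first step rewrites the hypotheses. Since $\iota(v^{(1+\gamma)\sigma})=\iota(v^\sigma)\iota(v^{\gamma\sigma})$, total positivity of $v^{1+\gamma}$ says that $s(\sigma)s(\gamma\sigma)=1$ for all $\sigma\in G$. Because $G$ is abelian, this means $s(\sigma)=s(\sigma\gamma)$, so $s$ is invariant under translation by $\gamma$. In the same way, total positivity of $v^{1+\tau}$ gives $s(\sigma)=s(\sigma\tau)$ for all $\sigma$. Hence $s$ is invariant under translation by the subgroup generated by $\gamma$ and $\tau$, which is all of $G$. As $G$ acts transitively, $s$ is constant, and so $v$ is totally positive or totally negative.

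There is essentially no obstacle here; the only point needing care is the bookkeeping of how the Galois action interacts with a fixed embedding. The identity $\iota\circ\sigma\circ\gamma$ versus $\iota\circ\gamma\circ\sigma$ is harmless because $G$ is abelian. One should also use that $\gamma$ restricted to $k_n$, together with $\tau$, generates $\Gal(k_n/\Q)$. This holds because $\gamma$ generates $\Gal(k_n/k)$ and $\tau$ generates $\Gal(k_n/\B_n)$, and these two subgroups intersect trivially with product of order $2^{n+1}=[k_n:\Q]$.
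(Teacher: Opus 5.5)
Your proof is correct and is essentially the paper's argument. The paper writes each conjugate as $v^{\gamma^i\tau^j}=v^{1+(\gamma-1)f(\gamma,\tau)+(\tau-1)g(\gamma,\tau)}$, using $X^iY^j\equiv1\pmod{(X-1,Y-1)}$ and the total positivity of $v^{1-\gamma}$ and $v^{1-\tau}$. That is the group-ring form of your observation that the sign function on $\Gal(k_n/\Q)$ is invariant under translation by the generators $\gamma$ and $\tau$, and hence constant.
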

\begin{proof}
  Any conjugation of $v$ is written as $v^{\gamma^i\tau^j}$
  with some integers $i,j$.
  Since
  \[
    X^iY^j\equiv1\pmod{(X-1,Y-1)}
  \]
  in $\Z[X,Y]$,
  we can write
  \[
    v^{\gamma^i\tau^j}=v^{1+(\gamma-1)f(\gamma,\tau)+(\tau-1)g(\gamma,\tau)}
  \]
  with some $f,g\in\Z[X,Y]$.
  Since $v^{1-\gamma}$ and $v^{1-\tau}$ are totally positive,
  $v^{(\gamma-1)f(\gamma,\tau)}$ and
  $v^{(\tau-1)g(\gamma,\tau)}$ are positive.
  Hence $v^{\gamma^i\tau^j}$ and $v$ have the same sign.
\end{proof}

\begin{proof}[Proof of \cref{thm:main}]
  If $v=u^2$ with $u\in E_n$, we have
  \[
    \sqrt{v^{1+\gamma}}^{1+\tau}
    =(\pm u^{1+\gamma})^{1+\tau}
    =u^{(1+\gamma)(1+\tau)}
  \]
  and
  \[
    \sqrt{v^{1+\tau}}^{1+\gamma}
    =(\pm u^{1+\tau})^{1+\gamma}
    =u^{(1+\tau)(1+\gamma)},
  \]
  which coincide.
  This shows (\labelcref{it:main_3}),
  and the other conditions are obvious.

  Conversely, assume the conditions (\labelcref{it:main_1})--(\labelcref{it:main_4}).
  Put $L=k_{n+1}(\sqrt v)$.

  First we show that $L$ is Galois over $\Q$.
  Let $\widetilde{\gamma},\widetilde{\tau}\colon L\to\C$ be
  extensions of $\gamma|_{k_{n+1}},\tau|_{k_{n+1}}$, respectively.
  Then we have
  \[
    \sqrt v^{1+\widetilde{\gamma}}=\pm\sqrt{v^{1+\gamma}}\in k_n
  \]
  from (\labelcref{it:main_1}).
  Hence $\sqrt v^{\widetilde{\gamma}}\in L$,
  which yields $L^{\widetilde{\gamma}}\subseteq L$ since $k_{n+1}/\Q$ is Galois.
  Similarly we have $L^{\widetilde{\tau}}\subseteq L$ using (\labelcref{it:main_2}).
  Therefore $L/\Q$ is Galois.

  Next we show that $L/\Q$ is abelian.
  We have
  \[
    \Gal(L/\Q)=\braket{\widetilde{\gamma},\widetilde{\tau},\sigma},
  \]
  where $\sigma$ is the generator of $\Gal(L/k_{n+1})$.
  From (\labelcref{it:main_1}) we obtain
  \[
    \sqrt v^{(1+\widetilde{\gamma})(1+\widetilde{\tau})}
    =\Bigl(\pm\sqrt{v^{1+\gamma}}\Bigr)^{1+\widetilde{\tau}}
    =\sqrt{v^{1+\gamma}}^{1+\tau}.
  \]
  Similarly,
  from (\labelcref{it:main_2}) we obtain
  \[
    \sqrt v^{(1+\widetilde{\tau})(1+\widetilde{\gamma})}
    =\Bigl(\pm\sqrt{v^{1+\tau}}\Bigr)^{1+\widetilde{\gamma}}
    =\sqrt{v^{1+\tau}}^{1+\gamma}.
  \]
  From (\labelcref{it:main_3}), these coincide.
  Therefore $\widetilde{\gamma}$ and $\widetilde{\tau}$ commute
  on $\sqrt v$, hence on $L$.
  Since $\braket{\sigma}$ is a normal subgroup of $\Gal(L/\Q)$
  whose order is $1$ or $2$,
  $\sigma$ belongs to the center of $\Gal(L/\Q)$.
  Therefore $L/\Q$ is abelian.

  Next we consider the orders of $\widetilde{\gamma}$ and $\widetilde{\tau}$.
  Since $v^{\gamma^{2^n}}=v$, we have
  \[
    \sqrt v^{1-\widetilde{\gamma}^{2^{n+1}}}
    =\sqrt v^{(1-\widetilde{\gamma}^{2^n})(1+\widetilde{\gamma}^{2^n})}
    =\Bigl(\pm\sqrt{v^{1-\gamma^{2^n}}}\Bigr)^{1+\widetilde{\gamma}^{2^n}}=1.
  \]
  From (\labelcref{it:main_2}) and \cref{prop:index_tau},
  we have $v^{1+\tau}\in E_{\B_n}^2$.
  This yields
  \[
    \sqrt v^{1-\widetilde{\tau}^2}
    =\sqrt v^{(1+\widetilde{\tau})(1-\widetilde{\tau})}
    =\Bigl(\pm\sqrt{v^{1+\tau}}\Bigr)^{1-\widetilde{\tau}}
    =\sqrt{v^{1+\tau}}^{1-\tau}=1.
  \]
  Therefore we obtain
  $\widetilde{\gamma}^{2^{n+1}}=\widetilde{\tau}^2=1$,
  which shows that the orders of $\widetilde{\gamma},\widetilde{\tau}$
  equal to the orders of $\gamma|_{k_{n+1}},\tau|_{k_{n+1}}$, respectively.
  Hence we have
  \[
    \Gal(L/\Q)=\braket{\widetilde{\gamma},\widetilde{\tau}}\times\braket{\sigma}.
  \]

  Let $F$ be the field corresponding to $\braket{\widetilde{\gamma},\widetilde{\tau}}$.
  Then $F$ is $\Q$ or a quadratic subfield of $L$
  satisfying $k_{n+1}F=L$ and $k_{n+1}\cap F=\Q$.
  From (\labelcref{it:main_1}), (\labelcref{it:main_2}), (\labelcref{it:main_4}) and \cref{lem:total},
  $v$ is a totally positive unit.
  It yields that $L/\Q$ is unramified outside $2m$.
  Hence $F$ must be contained in $\Q(\sqrt2,\sqrt{p_1},\dots,\sqrt{p_t})$.

  If $m$ is a prime, this yields $F=\Q$ since $\Q(\sqrt2,\sqrt m)=k_1$.
  Hence $\sqrt v\in L=k_{n+1}$, which yields
  $v\in E_n^2$ from \cref{prop:index_gam}~(\labelcref{it:index_mod}).

  Generally, assume that the condition (\labelcref{it:main_5}) holds.
  We have
  \[
    k_n(\sqrt v)\subseteq L=k_{n+1}F\subseteq k_{n+1}(\sqrt{p_1},\dots,\sqrt{p_t}).
  \]
  From our assumption on
  $\mathfrak{q}_1,\dots,\mathfrak{q}_t$,
  if $k_n(\sqrt v)\supsetneq k_n$,
  there exists $i$ such that
  the Artin symbol $\jac{k_n(\sqrt v)/k_n}{\mathfrak{q}_i}$ is non-trivial.
  This contradicts the condition (\labelcref{it:main_5}).
  Therefore we obtain $k_n(\sqrt v)=k_n$,
  which yields $v\in E_n^2$.
\end{proof}

\begin{corollary}
  \label{cor:main}
  The conditions (\labelcref{it:vnj_sq}) and (\labelcref{it:vnj_gam})
  in \cref{def:iter}
  hold if and only if the following conditions hold:
  \begin{enumerate}
    \item $v^{1+\gamma}\in(V_n^{(j-1)})^2$,
    \item $v^{1+\tau}\in(V_n^{(j-1)})^2$,
    \item $\sqrt{v^{1+\gamma}}^{1+\tau}=\sqrt{v^{1+\tau}}^{1+\gamma}$,
    \item $v>0$,
    \item (not necessary when $m$ is a prime) $v$ is square modulo $\mathfrak{q}_i$ for all $i=1,\dots,t$,
    where $\mathfrak{q}_1,\dots,\mathfrak{q}_t$ are those in \cref{thm:main}.
  \end{enumerate}
\end{corollary}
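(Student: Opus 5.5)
We will deduce the corollary from \cref{thm:main}, applied not to $v$ itself but to $w=\sqrt v$ when it exists, together with the group-theoretic facts from \cref{lem:key_gam,lem:key_tau}. Write $V=V_n^{(j-1)}$. Recall that $V$ contains $c_n$ and $-1$, and is $\gamma$- and $\tau$-invariant.

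For the forward direction, assume (\labelcref{it:vnj_sq}) and (\labelcref{it:vnj_gam}). Then condition (1) is exactly (\labelcref{it:vnj_gam}). Condition (2) follows from \cref{lem:key_tau}, because $v\in V\cap E_n^2$. Condition (4) holds since $v$ is a square of a real unit. For (3) and (5), write $v=u^2$ with $u\in E''_n$. The square roots in (3) are only defined up to sign. The remark after \cref{thm:main} says only signs matter, so we choose $\sqrt{v^{1+\gamma}}=\pm u^{1+\gamma}$ and $\sqrt{v^{1+\tau}}=\pm u^{1+\tau}$, and the computation in the first part of the proof of \cref{thm:main} gives the equality. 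Condition (5) is immediate, since $v$ is a global square.

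The converse is the main step. Assume (1)--(5). We must show $v\in(E''_n)^2$; condition (\labelcref{it:vnj_gam}) is then just (1). Since $E_n/E''_n$ has odd order, it suffices to show $v\in E_n^2$. Indeed, if $v=u^2$ with $u\in E_n$, then $u^{\text{odd}}\in E''_n$, and $u^2\in V\subseteq E''_n$, so $u\in E''_n$. To get $v\in E_n^2$ we apply \cref{thm:main} to $v$ directly. Hypotheses (1) and (2) give $v^{1+\gamma},v^{1+\tau}\in V^2\subseteq E_n^2$, so conditions (\labelcref{it:main_1}) and (\labelcref{it:main_2}) hold. The square roots appearing in (3) of the corollary are taken inside $V$. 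They coincide up to sign with any square roots in $E_n$, and since only the sign matters, (3) gives (\labelcref{it:main_3}). Conditions (4) and (5) give (\labelcref{it:main_4}) and (\labelcref{it:main_5}). Hence $v\in E_n^2$ by \cref{thm:main}.

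The main obstacle is the delicate point in (3): the choice of signs of the square roots. In the corollary, $\sqrt{v^{1+\gamma}}$ is an element of $V$ determined only up to $\pm1$, and the condition must be read as holding for a consistent choice. We would fix the convention as in the remark after \cref{thm:main}: each side of (3) is determined up to sign, the squares always agree, and the condition is that the two sides are equal. Replacing a root by its negative changes a side by $(-1)^{1+\tau}=1$ or $(-1)^{1+\gamma}=1$, so the condition does not depend on the choice. Once this is noted, both directions reduce formally to \cref{thm:main}.
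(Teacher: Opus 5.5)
Your proposal is correct and is exactly the argument the paper compresses into ``easily shown from \cref{thm:main,lem:key_tau}'': \cref{lem:key_tau} supplies condition (2) in the forward direction, \cref{thm:main} applied to $v$ gives $v\in E_n^2$ in the converse, and the odd index $\gind{E_n}{E''_n}$, together with $v\in V_n^{(j-1)}\subseteq E''_n$, upgrades this to $v\in(E''_n)^2$. The only blemish is your opening sentence: it says you will apply \cref{thm:main} to $\sqrt v$ rather than to $v$, but in the argument you actually apply it to $v$ itself, which is the right thing to do.
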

\begin{proof}
  Easily shown from \cref{thm:main,lem:key_tau}.
\end{proof}

These conditions can be rewritten using linear algebra over $\F_2$.
First we introduce a notation.
\begin{definition}
  Let $N=2^{n+1}$
  and let $B=(b_0,\dots,b_{N-1})$
  be a sequence of units in $E_n$.
  For a vector $\mathbf{x}=(x_0,\dots,x_{N-1})\in\Z^N$
  (regarded as a column vector),
  we write
  \[
    B^\mathbf{x}=\prod_{i=0}^{N-1}b_i^{x_i}\in E_n.
  \]
  Similarly, for an $N\times M$-matrix $A=(a_{ij})$ of integer components,
  we write
  \[
    B^A=\left(\prod_{i=0}^{N-1}b_i^{a_{i,0}},\dots,\prod_{i=0}^{N-1}b_i^{a_{i,M-1}}\right).
  \]
\end{definition}

\begin{lemma}
  \label{lem:assoc}
  We have $B^{A_1A_2}=(B^{A_1})^{A_2}$, where $A_1$ is an $N\times N$-matrix and
  $A_2$ is a column vector of size $N$ or an $N\times M$-matrix.
\end{lemma}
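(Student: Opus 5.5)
The statement is a direct associativity computation: write everything as products of exponentials and exchange the order of the finite products. Since $E_n$ is a multiplicative abelian group and all exponents are integers, no convergence or ordering issues arise. The plan is to prove the vector case first and then obtain the matrix case column by column.

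Let $A_1=(a_{ik})$ be an $N\times N$ matrix and $\mathbf{x}=(x_0,\dots,x_{N-1})$ a column vector. By definition, the $k$-th entry of $B^{A_1}$ is
\[
  b'_k=\prod_{i=0}^{N-1}b_i^{a_{ik}}.
\]
Hence
\[
  (B^{A_1})^{\mathbf{x}}=\prod_{k=0}^{N-1}(b'_k)^{x_k}=\prod_{k=0}^{N-1}\prod_{i=0}^{N-1}b_i^{a_{ik}x_k}.
\]
In the abelian group $E_n$ we use $(\prod_i b_i^{e_i})^{x}=\prod_i b_i^{e_ix}$ and swap the two finite products. This gives
\[
  \prod_{i=0}^{N-1}b_i^{\sum_k a_{ik}x_k}=\prod_{i=0}^{N-1}b_i^{(A_1\mathbf{x})_i}=B^{A_1\mathbf{x}}.
\]

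For an $N\times M$ matrix $A_2$ with columns $\mathbf{y}_0,\dots,\mathbf{y}_{M-1}$, the definitions give $B^{A_2}=(B^{\mathbf{y}_0},\dots,B^{\mathbf{y}_{M-1}})$. The columns of $A_1A_2$ are $A_1\mathbf{y}_l$. Applying the vector case to each column yields $B^{A_1A_2}=(B^{A_1})^{A_2}$ entrywise.

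The only point needing care is the index bookkeeping: the $k$-th entry of $B^{A_1}$ uses the $k$-th column of $A_1$, which makes the composite exponent $(A_1\mathbf{x})_i$ rather than a transposed product.
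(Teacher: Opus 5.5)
Your proposal is correct and follows the paper's approach: the paper only says the result holds ``similar as the associativity of matrix multiplications,'' and your computation is exactly that argument written out in full. You expand both sides in the abelian group $E_n$, swap the finite products, and then treat a matrix $A_2$ column by column.
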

\begin{proof}
  Similar as the associativity of matrix multiplications.
\end{proof}

Using a basis $B$,
we can express a unit $v$ as a vector $\mathbf{v}$
satisfying $v=B^\mathbf{v}$.
\begin{lemma}
  \label{lem:lin_sq}
  Let $V$ be a subgroup of $E_n$ containing $E'_n$,
  and let $B=(b_0,b_1,\dots,b_{2^{n+1}-1})$ be a basis of $V$ with $b_0=-1$.
  Then for $\mathbf{v}\in\Z^{2^{n+1}}$,
  $B^\mathbf{v}\in V^2$ if and only if $\mathbf{v}\equiv\mathbf{0}\pmod2$.
\end{lemma}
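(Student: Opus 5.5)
The "if" direction is immediate: if $\mathbf{v}=2\mathbf{w}$, then $B^\mathbf{v}=(B^\mathbf{w})^2\in V^2$.

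For the converse, the key point is that $B$ is a basis of $V$ in the sense of a $\Z$-module decomposition. By Dirichlet's unit theorem, $E_n\simeq\{\pm1\}\times\Z^{2^{n+1}-1}$, since $k_n$ is totally real of degree $2^{n+1}$. The group $V$ contains $E'_n$, which has finite index in $E_n$ by \cref{thm:sinnott}, so $V$ has the same structure: $V=\braket{-1}\times W$ with $W$ free of rank $2^{n+1}-1$. That $B$ is a basis of $V$ with $b_0=-1$ I interpret as the map
\[
  \Z/2\times\Z^{2^{n+1}-1}\to V,\qquad (x_0,x_1,\dots)\mapsto B^\mathbf{x},
\]
being an isomorphism. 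For the $B^{(j)}_n$ of \cref{def:iter} this holds by construction: each step replaces $b_{i_t}$ by $\sqrt{b_{i_1}\dotsm b_{i_t}}$, which is a unimodular change of basis of the free part modulo torsion. This is the step to watch, since $b_{i_1}$ might be $b_0=-1$. In that case $\sqrt{v}$ is determined only up to sign, but this ambiguity is absorbed by the torsion generator $b_0$, so the replacement still yields a basis.

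Now suppose $B^\mathbf{v}=u^2$ with $u\in V$, and write $u=B^\mathbf{w}$. Then $B^{\mathbf{v}-2\mathbf{w}}=1$. By the isomorphism above, $v_i=2w_i$ for every $i\ge1$, and $v_0-2w_0\equiv0\pmod2$. Hence $\mathbf{v}\equiv\mathbf{0}\pmod2$.

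The only subtle point is the role of $-1$. An element $(-1)\cdot w^2$ is never a square in $V$, since a square in a totally real field is totally positive while $-w^2<0$. The isomorphism argument handles this automatically, because $-1$ is not in $2V$. Equivalently, $V/V^2\simeq\F_2^{2^{n+1}}$, with the images of the $b_i$ forming an $\F_2$-basis, and this is exactly the statement of the lemma.
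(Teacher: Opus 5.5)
Your proof is correct and is essentially the paper's argument: writing the square as $B^{2\mathbf{w}}$ gives $B^{\mathbf{v}-2\mathbf{w}}=1$, and the basis property forces the coefficient of $b_0=-1$ to be even and all other coefficients to vanish. The digression on why the $B_n^{(j)}$ are bases is unnecessary here, since being a basis is a hypothesis of the lemma. It is also slightly inaccurate: replacing $b_{i_t}$ by $\sqrt{v}$ is not a unimodular change of basis, because the new tuple generates a supergroup of index $2$.
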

\begin{proof}
  If $\mathbf{v}=2\mathbf{u}$ with some $\mathbf{u}\in\Z^{2^{n+1}}$,
  then $B^\mathbf{v}=(B^\mathbf{u})^2\in V^2$.
  Conversely, assume $B^\mathbf{v}=(B^\mathbf{u})^2\in V^2$
  with some $\mathbf{u}\in\Z^{2^{n+1}}$.
  Then we have $B^{\mathbf{v}-2\mathbf{u}}=1$.
  This yields that
  the first component of $\mathbf{v}-2\mathbf{u}$
  is even and the other components are $0$,
  since $B$ is a basis.
  Hence all the components of $\mathbf{v}$ are even.
\end{proof}

Using the matrices representing
the maps $v\mapsto v^{1+\gamma}$ and $v\mapsto v^{1+\tau}$,
we can rewrite the conditions of \cref{cor:main}
in terms of linear algebra.
\begin{theorem}
  \label{thm:lin_main}
  Let $V$ and $B$ be those of \cref{lem:lin_sq}.
  Let $G$ be a $2^{n+1}\times2^{n+1}$-matrix of integer components
  satisfying $B^G=B^{1+\gamma}$.
  Similarly, let $T$ be a $2^{n+1}\times2^{n+1}$-matrix of integer components
  satisfying $B^T=B^{1+\tau}$.
  Then all components in the top row of $GT-TG$ are even,
  and all other components of it are zero.
  Further, for $\mathbf{v}\in\Z^{2^{n+1}}$, we have
  \begin{enumerate}
    \item $(B^\mathbf{v})^{1+\gamma}\in V^2\iff G\mathbf{v}\equiv\mathbf{0}\pmod2$,
    \label{it:lin_1}
    \item $(B^\mathbf{v})^{1+\tau}\in V^2\iff T\mathbf{v}\equiv\mathbf{0}\pmod2$,
    \label{it:lin_2}
    \item \(\sqrt{(B^\mathbf{v})^{1+\gamma}}^{1+\tau}=\sqrt{(B^\mathbf{v})^{1+\tau}}^{1+\gamma}\iff
    \frac{1}{2}(GT-TG)\mathbf{v}\equiv\mathbf{0}\pmod2\)
    when $G\mathbf{v}\equiv T\mathbf{v}\equiv\mathbf{0}\pmod2$.
    \label{it:lin_3}
  \end{enumerate}
\end{theorem}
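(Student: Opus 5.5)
The plan is to work entirely through the exponent-vector calculus of \cref{lem:assoc,lem:lin_sq}. The idea is to translate each unit identity into a congruence or equality of vectors. Since $B$ is a basis of $V$ with $b_0=-1$, an element $B^{\mathbf{x}}$ equals $1$ exactly when $x_0$ is even and $x_i=0$ for all $i\ge1$.

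\emph{The shape of $GT-TG$.} I would compute $B^{GT}$ and $B^{TG}$ with \cref{lem:assoc}:
\[
B^{GT}=(B^G)^T=(B^{1+\gamma})^T=(B^T)^{1+\gamma}=B^{(1+\tau)(1+\gamma)}.
\]
Here exponentiation by an integer matrix commutes with the Galois action, since the latter is a group endomorphism. Similarly $B^{TG}=B^{(1+\gamma)(1+\tau)}$. Because $\Gal(k_n/\Q)$ is abelian, these two are equal, so $B^{GT-TG}$ is the all-ones tuple. Applying the basis property column by column, every column of $GT-TG$ has an even top entry and zeros elsewhere. This proves the first assertion.

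\emph{Parts (\labelcref{it:lin_1}) and (\labelcref{it:lin_2}).} For (\labelcref{it:lin_1}), note that $(B^{\mathbf{v}})^{1+\gamma}=(B^{1+\gamma})^{\mathbf{v}}=B^{G\mathbf{v}}$, again by \cref{lem:assoc}. By \cref{lem:lin_sq}, this lies in $V^2$ if and only if $G\mathbf{v}\equiv\mathbf{0}\pmod2$. Part (\labelcref{it:lin_2}) is identical with $T$ in place of $G$.

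\emph{Part (\labelcref{it:lin_3}).} Here we assume $G\mathbf{v}=2\mathbf{g}$ and $T\mathbf{v}=2\mathbf{t}$ with integer vectors $\mathbf{g},\mathbf{t}$. Then $\sqrt{(B^{\mathbf{v}})^{1+\gamma}}=\pm B^{\mathbf{g}}$ and $\sqrt{(B^{\mathbf{v}})^{1+\tau}}=\pm B^{\mathbf{t}}$. The sign ambiguity disappears after applying $1+\tau$ or $1+\gamma$, because $(-1)^{1+\tau}=(-1)^{1+\gamma}=1$. So the left side of (\labelcref{it:lin_3}) is $(B^{\mathbf{g}})^{1+\tau}=B^{T\mathbf{g}}$ and the right side is $B^{G\mathbf{t}}$. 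Their quotient is
\[
B^{T\mathbf{g}-G\mathbf{t}}=B^{\frac12(TG-GT)\mathbf{v}}.
\]
By the first assertion, $\frac12(TG-GT)\mathbf{v}$ is an integer vector supported only on the top coordinate. Hence this quotient is $(-1)^{x_0}$, where $x_0$ is the top entry of $\frac12(GT-TG)\mathbf{v}$; the overall sign does not matter modulo $2$. The two sides therefore coincide if and only if $x_0$ is even. Since all other entries of $\frac12(GT-TG)\mathbf{v}$ are zero, this is equivalent to $\frac12(GT-TG)\mathbf{v}\equiv\mathbf{0}\pmod2$.

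\emph{Main obstacle.} The only delicate point is the bookkeeping of the square-root signs in (\labelcref{it:lin_3}): one must make sure that a different choice of $\mathbf{g}$ or of $\sqrt{\cdot}$ does not change the criterion. Choosing a different square root multiplies it by $-1$, which is killed by $1\pm\tau$ and $1\pm\gamma$. The vectors $\mathbf{g},\mathbf{t}$ are uniquely determined by $\mathbf{v}$, since $G\mathbf{v}$ and $T\mathbf{v}$ are fixed integer vectors. So the criterion is well defined.
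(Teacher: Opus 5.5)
Your proposal is correct and follows the paper's proof essentially step for step. It uses the same associativity computation $B^{GT}=B^{(1+\tau)(1+\gamma)}=B^{TG}$ for the shape of $GT-TG$, gets parts (1) and (2) from $(B^{\mathbf{v}})^{1+\gamma}=B^{G\mathbf{v}}$ together with \cref{lem:lin_sq}, and reduces part (3) to the quotient $B^{\frac12(GT-TG)\mathbf{v}}$ of the two sides. The only difference is cosmetic: you read this quotient off as $(-1)^{x_0}$ from the already-established shape of $GT-TG$, whereas the paper notes it is $\pm1$ and applies \cref{lem:lin_sq} once more.
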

\begin{proof}
  From \cref{lem:assoc}, we have
  \[
    B^{GT}=(B^G)^T=(B^{1+\gamma})^T=(B^T)^{1+\gamma}=B^{(1+\tau)(1+\gamma)}.
  \]
  Similarly we have $B^{TG}=B^{(1+\gamma)(1+\tau)}$,
  which equals to $B^{GT}$.
  Hence all the components of $B^{GT-TG}$ are $1$,
  which yields the first half of our claim.

  Since $(B^\mathbf{v})^{1+\gamma}=(B^{1+\gamma})^\mathbf{v}=(B^G)^\mathbf{v}=B^{G\mathbf{v}}$
  from \cref{lem:assoc},
  we obtain
  (\labelcref{it:lin_1}) from \cref{lem:lin_sq}.
  Also we obtain
  (\labelcref{it:lin_2}) similarly.
  Assume $G\mathbf{v}\equiv T\mathbf{v}\equiv\mathbf{0}\pmod2$.
  Then we have $B^{\frac{1}{2}G\mathbf{v}}=\pm\sqrt{(B^\mathbf{v})^{1+\gamma}}$,
  hence
  \[
    \sqrt{(B^\mathbf{v})^{1+\gamma}}^{1+\tau}
    =(\pm B^{\frac{1}{2}G\mathbf{v}})^{1+\tau}
    =(B^T)^{\frac{1}{2}G\mathbf{v}}
    =B^{\frac{1}{2}TG\mathbf{v}}
  \]
  from \cref{lem:assoc}.
  Similarly we have $\sqrt{(B^\mathbf{v})^{1+\tau}}^{1+\gamma}=B^{\frac{1}{2}GT\mathbf{v}}$.
  Combining them we obtain
  \[
    \frac{\sqrt{(B^\mathbf{v})^{1+\tau}}^{1+\gamma}}{\sqrt{(B^\mathbf{v})^{1+\gamma}}^{1+\tau}}
    =B^{\frac{1}{2}(GT-TG)\mathbf{v}},
  \]
  and the left-hand side is $\pm1$.
  Hence we obtain (\labelcref{it:lin_3}) from \cref{lem:lin_sq}.
\end{proof}

In \cite{fukudakomatsukumakawasasaki_jxm_2026},
we devised a method for calculating $(a_0,a_1,\dots,a_n)$ efficiently
for $m<10^6$ and $n\le11$.
Using \cref{cor:main,thm:lin_main}, this method can be made further faster.
\begin{algorithm}[for specific $m$]
  \label{algo:spec}
  Assume that $m$ is given specifically.
  Let $n_0\ge1$.
  This algorithm computes $(a_0,a_1,\dots,a_{n_0})$.
  \begin{enumerate}
    \item Compute $h_k$ and set $a_0$ to its $2$-adic valuation.
    Initialize $n\leftarrow0$.
    \item Take $\mathfrak{q}_1,\dots,\mathfrak{q}_t$ as in \cref{thm:main}.
    \item Compute
    \[
      B=(-1,\varepsilon,c_1,\eta_1,\dots,c_{n_0},c_{n_0}^\gamma,\dots,c_{n_0}^{\gamma^{2^{n_0-1}-1}},\eta_{n_0},\eta_{n_0}^\gamma,\dots,\eta_{n_0}^{\gamma^{2^{n_0-1}-1}}).
    \]
    The algorithm for computing $\eta_1,\dots,\eta_{n_0}$ is described in \cite{fukudakomatsukumakawasasaki_jxm_2026}.
    \item Initialize $G$ and $T$ to satisfy $B^G=B^{1+\gamma}$ and $B^T=B^{1+\tau}$.
    The components can be computed using
    \cref{prop:norm_gam,prop:norm_tau,prop:eta0,prop:sign_c}.
    \item Set $n\leftarrow n+1$.
    If $n>n_0$, output $(a_0,\dots,a_{n_0})$ and terminate the algorithm.
    \label{it:loop0}
    \item Set $a_n\leftarrow a_{n-1}$.
    \item Let $B_n=(b_0,\dots,b_{2^{n+1}-1})$
    be the first $2^{n+1}$ components of $B$.
    Compute
    \[
      s_{ij}=\begin{cases}
        0&(b_j\text{ is square modulo }\mathfrak{q}_i),\\1&(\text{otherwise})
      \end{cases}
    \]
    for all $1\le i\le t,\,0\le j\le2^{n+1}-1$.
    Since $\mathfrak{q}_i$ splits in $k_n/\Q$,
    these computations can be reduced to computations of rational Legendre symbols.
    Make a $t\times2^{n+1}$-matrix $S=(s_{ij})$.
    \label{it:loop1}
    \item Let $A$ be the $(2^{n+2}+1+t)\times2^{n+1}$-matrix
    formed by stacking
    \begin{itemize}
      \item the top-left $2^{n+1}\times2^{n+1}$-submatrix $G_n$ of $G$,
      \item the top-left $2^{n+1}\times2^{n+1}$-submatrix $T_n$ of $T$,
      \item the top row of $(G_nT_n-T_nG_n)/2$, and
      \item $S$
    \end{itemize}
    vertically.
    Find a non-trivial $\mathbf{v}=(x_0,\dots,x_{2^{n+1}-1})\in\{0,1\}^{2^{n+1}}$
    which satisfies $A\mathbf{v}\equiv\mathbf{0}\pmod2$
    and the maximum index $r$ with $x_r=1$ is minimum.
    If there is not such $\mathbf{v}$, go to (\labelcref{it:loop0}).
    \item Set $a_n\leftarrow a_n+1$.
    \item Compute a square root $u$ of $\lvert B_n^\mathbf{v}\rvert$
    using the method described in \cite{fukudakomatsukumakawasasaki_jxm_2026},
    and compute signs of $u^{1+\gamma}$ and $u^{1+\tau}$.
    \item Replace the $r$-th component of $B$ by $u$.
    \item Transform $G$ and $T$ by elementary operations to satisfy $B^G=B^{1+\gamma}$ and $B^T=B^{1+\tau}$
    for the new $B$.
    The $(0,r)$-components are obtained by the signs of $u^{1+\gamma}$ and $u^{1+\tau}$.
    Go to (\labelcref{it:loop1}).
  \end{enumerate}
\end{algorithm}

As a note on implementation,
instead of the signs,
we can use quadratic residues modulo
a fixed prime ideal which splits in $k_n/\Q$
and $-1$ is non-residue,
to avoid the possibilities of computational errors.

The algorithm in \cite{fukudakomatsukumakawasasaki_jxm_2026}
requires a lot of computations of Legendre symbols
to find a square unit,
but the above algorithm is faster
because it uses the matrix $A$,
which can be computed in a very short time.

\section{The cases where $m$ is a prime}
\label{sect:prime}

In this section,
we assume $m=p$ is a prime with $p\equiv1\pmod8$.
Under this assumption,
we shall describe some properties of $(a_n)_n$
obtained via our iteration.
\cref{cor:p_ne_1} is one of the interesting properties of $(a_n)_n$.
It is well-known that $a_0=0$ and $\varepsilon^{1+\tau}=-1$ hold.

First, we summarize some fundamental properties of
the first layer in our iteration.
\begin{proposition}
  \label{prop:eta1_gt}
  We have
  \begin{enumerate}
    \item $\eta_1^{1+\gamma}=\pm1\equiv2^{(p-1)/4}\pmod p$,
    \label{it:eta1_g}
    \item $\eta_1^{1+\tau}=(-1)^{(p-1)/8}$.
    \label{it:eta1_t}
  \end{enumerate}
\end{proposition}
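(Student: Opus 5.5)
The plan is to reduce both statements to results already established: part (\labelcref{it:eta1_t}) follows from \cref{ex:norm_tau}, and part (\labelcref{it:eta1_g}) follows from \cref{prop:norm_gam} and \cref{prop:eta0}, plus one congruence computation modulo a prime above $p$.

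For (\labelcref{it:eta1_t}), \cref{ex:norm_tau} gives $\eta_1^{1+\tau}=1$ when $p\equiv1\pmod{16}$ and $\eta_1^{1+\tau}=-1$ when $p\equiv9\pmod{16}$. Since $p\equiv1\pmod8$, the integer $(p-1)/8$ is even in the first case and odd in the second. Hence $\eta_1^{1+\tau}=(-1)^{(p-1)/8}$.

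For (\labelcref{it:eta1_g}), $\gamma$ restricted to $k_1$ generates $\Gal(k_1/k)$, so $\eta_1^{1+\gamma}=N_{1,0}(\eta_1)$. By \cref{prop:norm_gam} this equals $(-1)^{\varphi(p)/4}\eta_0=\eta_0$, because $(p-1)/4$ is even. By \cref{prop:eta0} with $p\equiv1\pmod8$, we have $\eta_0=\pm1$. It remains to find the sign. Since $p$ is odd, $1\not\equiv-1\pmod p$, so it suffices to show $\eta_0\equiv2^{(p-1)/4}$ modulo a prime $\mathfrak{P}$ of $\Q(\zeta_{4p})$ above $p$.

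This congruence is the main step. Recall
\[
  \eta_0=\zeta_4^{-(p-1)s/4}N_{\Q(\zeta_{4p})/k(\zeta_4)}(1-\zeta_{4p}),
\]
where $ps\equiv1\pmod4$. Choose an integer $t$ with $ps+4t=1$. Then $\zeta_{4p}=\zeta_4^{s}\zeta_p^{t}$, and since $\zeta_p\equiv1\pmod{\mathfrak{P}}$, we get $\zeta_{4p}\equiv\zeta_4^s\pmod{\mathfrak{P}}$. The group $\Gal(\Q(\zeta_{4p})/k(\zeta_4))$ fixes $\zeta_4$ and acts only on $\zeta_p$ through the squares in $(\Z/p\Z)^\times$, so it has order $(p-1)/2$. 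Every conjugate $1-\zeta_4^s\zeta_p^{at}$ is therefore congruent to $1-\zeta_4^s$ modulo $\mathfrak{P}$. This gives
\[
  N_{\Q(\zeta_{4p})/k(\zeta_4)}(1-\zeta_{4p})\equiv(1-\zeta_4^s)^{(p-1)/2}=\bigl((1-\zeta_4^s)^2\bigr)^{(p-1)/4}=(-2\zeta_4^s)^{(p-1)/4}\pmod{\mathfrak{P}}.
\]
Since $(p-1)/4$ is even, this equals $2^{(p-1)/4}\zeta_4^{s(p-1)/4}$. Multiplying by the prefactor $\zeta_4^{-(p-1)s/4}$ cancels the root of unity, leaving $\eta_0\equiv2^{(p-1)/4}\pmod{\mathfrak{P}}$. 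As $\eta_0=\pm1$ is rational, this congruence holds modulo $p$, which completes (\labelcref{it:eta1_g}).

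The bookkeeping of the exponents of $\zeta_4$ is the only place where an error could creep in. It is safe because it uses only that $(1-i^s)^2=-2i^s$ and that $(p-1)/4$ is even.
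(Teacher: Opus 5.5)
Your proposal is correct and follows essentially the same route as the paper: reduce $\eta_1^{1+\gamma}$ to $\eta_0=\pm1$ via \cref{prop:norm_gam,prop:eta0}, and pin down the sign by showing $\eta_0\equiv2^{(p-1)/4}$ modulo a prime above $p$, reducing every conjugate of $1-\zeta_{4p}$ to $1-\zeta_4^{s}$ there. Part (ii) is then read off from \cref{prop:norm_tau}, which you access through \cref{ex:norm_tau}; the only cosmetic difference is that the paper takes $s=1$, so that $\zeta_{4p}=\zeta_4\zeta_p^{-(p-1)/4}$, while you keep $s$ general, which changes nothing since $s$ is odd.
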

\begin{proof}
  (See also \cite[Lemma~4.1]{fukudakomatsu_funct_2014}.)
  From \cref{prop:norm_gam,prop:eta0},
  we know $\eta_1^{1+\gamma}=\eta_0=\pm1$.
  Since $2$ is square modulo $p$,
  we have $2^{(p-1)/4}\equiv\pm1\pmod p$.
  Also we have
  \begin{align}
    \eta_0&=\zeta_4^{-(p-1)/4}N_{\Q(\zeta_{4p})/k(\zeta_4)}(1-\zeta_{4p})\\
    &=\zeta_4^{-(p-1)/4}N_{\Q(\zeta_{4p})/k(\zeta_4)}(1-\zeta_4\zeta_p^{-(p-1)/4})\\
    &\equiv\zeta_4^{-(p-1)/4}(1-\zeta_4)^{(p-1)/2}=2^{(p-1)/4}\pmod{1-\zeta_p}.
  \end{align}
  Hence $\eta_0\equiv2^{(p-1)/4}\pmod p$,
  which shows (\labelcref{it:eta1_g}).
  \cref{prop:norm_tau} yields (\labelcref{it:eta1_t}).
\end{proof}

Let $k'=\Q(\sqrt{2p})$ and
let $\varepsilon'>1$ be the fundamental unit of $k'$.
\begin{proposition}
  \label{prop:hkp}
  The $2$-adic valuation of $h_{k'}$ equals to $a_1+1$.
\end{proposition}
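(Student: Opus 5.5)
We have to show that $v_2(h_{k'})=a_1+1$, where $k'=\Q(\sqrt{2p})$. Since $k_1=\Q(\sqrt2,\sqrt p)$ is a biquadratic field, the plan is to use Kuroda's class number formula for $V_4$-extensions of $\Q$. Its three quadratic subfields are $\Q(\sqrt2)$, $k=\Q(\sqrt p)$ and $k'$. The formula reads
\[
  h_{k_1}=\tfrac14\,Q\,h_{\Q(\sqrt2)}\,h_k\,h_{k'},
\]
where $Q=\gind{E_1}{E_{\Q(\sqrt2)}E_kE_{k'}}$ is the unit index.

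For the quadratic class numbers we use that $h_{\Q(\sqrt2)}=1$ and that $h_k$ is odd, since $a_0=0$. Taking $2$-adic valuations then gives
\[
  a_1=v_2(h_{k'})+v_2(Q)-2.
\]
So the claim is equivalent to $v_2(Q)=1$, i.e.\ $Q=2$, knowing a priori that $Q$ divides $8$ (in fact $Q\in\{1,2,4\}$ for real biquadratic fields).

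The second step relates $Q$ to the paper's framework through \cref{thm:sinnott} for $n=1$. We have $E'_1=E_0\times C_1\times H_1=\braket{-1,\varepsilon,c_1,\eta_1}$, and $\gind{E_1}{E'_1}=h_{k_1}/h_k$. Here $c_1=1+\sqrt2$ is the fundamental unit of $\Q(\sqrt2)$. By \cref{prop:norm_eta1}, $N_{k_1/k'}(\eta_1)=\pm(\varepsilon')^{-h_{k'}/2}$, and $\eta_1\in k_1$. So I would compare the group $E'_1$ with $E_{\Q(\sqrt2)}E_kE_{k'}=\braket{-1,c_1,\varepsilon,\varepsilon'}$.

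Both groups have rank $3$ and share $-1,c_1,\varepsilon$. We need to locate $\eta_1$ relative to $\varepsilon'$. Since $\varepsilon^{1+\tau}=-1$ and $\eta_1^{1+\gamma}=\pm1$, $\eta_1^{1+\tau}=\pm1$ (\cref{prop:eta1_gt}), we get that $\eta_1^2$ equals $N_{k_1/k'}(\eta_1)$ times an element of $\braket{-1,c_1,\varepsilon}$. Concretely, the norm $N_{k_1/k'}$ corresponds to $1+\gamma\tau$, and $\eta_1^{1+\gamma\tau}\eta_1^{1-\gamma\tau}$ relates $\eta_1^2$ to conjugates. Tracking this gives
\[
  \gind{E_1}{E'_1}=Q\cdot\gind{\braket{-1,c_1,\varepsilon,\varepsilon'}}{E'_1}.
\]
The second index should come out as $h_{k'}/2$, or $h_{k'}/2$ times a power of $2$ determined by whether $\eta_1$ itself lies in $\braket{-1,c_1,\varepsilon,\varepsilon'}$.

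Combining with Kuroda's formula then yields the consistency relation $h_{k_1}/h_k=Q\cdot(\text{index})$. From this, $Q$ should be forced.

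The main obstacle is precisely determining $Q$, equivalently whether $\sqrt{\pm\varepsilon'^{\,a}c_1^{b}\varepsilon^{c}}\in k_1$ for the relevant exponents. I would handle it with norm arguments. The units $\varepsilon$ and $c_1$ have norm $-1$ to $\Q$, and by \cref{prop:index_gam}~(\labelcref{it:index_mod}) nothing from $k$ becomes a square in $k_1$. The candidate square roots are then limited to $\sqrt{\varepsilon'}$, $\sqrt{c_1\varepsilon}$, $\sqrt{c_1\varepsilon\varepsilon'}$ and their $\pm$ versions. One shows, using the known square root $\sqrt{c_1\varepsilon\cdots}$-type identity for biquadratic fields with two norm $-1$ units (Kubota's classification), that exactly one of these is a square. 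Which one depends on $N\varepsilon'$, but in both cases $Q=2$.

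If Kuroda's formula is inconvenient, an alternative is to get the same identity purely from \cref{thm:sinnott} and \cref{prop:norm_eta1}. One would compute $\gind{E_1}{E'_1}$ via the index of $\braket{\eta_1}$ in $\braket{\varepsilon'}$ (which is $h_{k'}/2$ up to the factor from $Q$) and then divide by the factor-$2$ unit contribution.
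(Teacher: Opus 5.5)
Your reduction is correct. Kuroda's formula $h_{k_1}=\frac14 Q\,h_{\Q(\sqrt2)}h_kh_{k'}$, together with $h_{\Q(\sqrt2)}=1$ and $h_k$ odd, makes the statement equivalent to $Q=2$, where $Q=\gind{E_1}{U}$ and $U=\braket{-1,c_1,\varepsilon,\varepsilon'}$. But proving $Q=2$ is the entire content, and your proposal does not prove it.

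Norms to the three quadratic subfields give only necessary conditions. Done carefully, they show that the only nontrivial class of $U/U^2$ that can become a square in $k_1$ is that of $\varepsilon'$ if $N_{k'/\Q}(\varepsilon')=1$, and that of $c_1\varepsilon\varepsilon'$ if $N_{k'/\Q}(\varepsilon')=-1$. Your candidate $c_1\varepsilon$ is excluded, since its norm to $\Q(\sqrt2)$ is $-c_1^2$. This gives $Q\le2$. For $Q\ge2$ you must actually exhibit a square root, and neither of your suggestions does this.

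Kubota's classification only lists the possible shapes of a fundamental system. When all three quadratic fundamental units have norm $-1$, it allows both $\{c_1,\varepsilon,\varepsilon'\}$ and $\{c_1,\varepsilon,\sqrt{c_1\varepsilon\varepsilon'}\}$, and deciding between them is exactly the point.

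Your fallback via \cref{thm:sinnott} is circular. Combine $\gind{E_1}{E'_1}=h_{k_1}/h_k$ with $\eta_1^2=(\varepsilon')^{-h_{k'}/2}$ (\cref{prop:eta1_kp}, which follows from \cref{prop:norm_eta1} without using this proposition). This gives $2h_{k_1}/h_k=\gind{E_1}{\braket{-1,\varepsilon,c_1,\eta_1^2}}=Q\,h_{k'}/2$. That is just Kuroda's formula again, so it forces nothing about $Q$. Your displayed index identity also presupposes $E'_1\subseteq U$, which fails whenever $\eta_1\notin k'$.

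The gap can be closed with \cref{thm:main} for $n=1$, which precedes this proposition.
\begin{itemize}
\item If $N_{k'/\Q}(\varepsilon')=1$, take $v=\varepsilon'$. Then $v^{1+\gamma}=v^{1+\tau}=1$. (Alternatively, Hilbert~90 gives $r\varepsilon'\in(k')^2$ with $r\in\{2,p\}$.)
\item If $N_{k'/\Q}(\varepsilon')=-1$, take $v=c_1\varepsilon\varepsilon'$. Then $v^{1+\gamma}=\varepsilon^2$, $v^{1+\tau}=c_1^2$, and $\varepsilon^{1+\tau}=-1=c_1^{1+\gamma}$.
\end{itemize}
In both cases $v>0$, so $v\in E_1^2$ and $Q=2$. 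The norm $-1$ case is the genuinely nontrivial one.

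For comparison, the paper avoids units altogether. The extension $k_1/k'$ is unramified, and genus theory makes the $2$-class group of $k'$ cyclic. Hence $\Gal(H_{k_1}/k')$ is a $2$-group with cyclic abelianization, so it is cyclic, which forces $H_{k'}=H_{k_1}$. Your completed argument is the unit-index counterpart of this, since the statement is equivalent to $Q=2$. However, it needs the square-root criterion of \cref{thm:main} to go through.
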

\begin{proof}
  Let $H_{k'}$ be the Hilbert $2$-class field of $k'$
  and $H_{k_1}$ be that of $k_1$.
  From the class field theory, we know that
  \[
    k'\subseteq k_1\subseteq H_{k'}\subseteq H_{k_1},
  \]
  and $H_{k_1}/k'$ is Galois.
  Also the abelianization of $\Gal(H_{k_1}/k')$
  is isomorphic to $\Gal(H_{k'}/k')$,
  which is cyclic from the genus theory.
  Hence, from the property of groups of prime-power orders,
  $H_{k'}=H_{k_1}$ must hold.
  This yields our claim.
\end{proof}

\begin{proposition}
  \label{prop:eta1_kp}
  We have
  $\eta_1^2=(\varepsilon')^{-h_{k'}/2}$.
  Further, if $2^{(p-1)/4}\equiv(-1)^{(p-1)/8}\pmod p$,
  then $\eta_1\in k'$ and
  $h_{k'}$ is divisible by $4$.
\end{proposition}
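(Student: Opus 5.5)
The plan is to compare $\eta_1^2$ with the norm $N_{k_1/k'}(\eta_1)$ already computed in \cref{prop:norm_eta1}, using only \cref{prop:eta1_gt}. First I identify the Galois action on $k_1=\Q(\sqrt2,\sqrt p)$. Since $\sqrt2=\zeta_8+\zeta_8^{-1}\mapsto\zeta_8^3+\zeta_8^{-3}=-\sqrt2$, the restriction of $\gamma$ to $k_1$ negates $\sqrt2$ and fixes $\sqrt p$. The automorphism $\tau$ fixes $\sqrt2$ and negates $\sqrt p$. Hence $k'=\Q(\sqrt{2p})$ is the fixed field of $\gamma\tau$, and
\[
  N_{k_1/k'}(\eta_1)=\eta_1^{1+\gamma\tau}.
\]

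Next put $s_1=\eta_1^{1+\gamma}$ and $s_2=\eta_1^{1+\tau}$. By \cref{prop:eta1_gt}, both are $\pm1$, with $s_1\equiv2^{(p-1)/4}\pmod p$ and $s_2=(-1)^{(p-1)/8}$. Then $\eta_1^\gamma=s_1\eta_1^{-1}$. Applying $\tau$ gives $\eta_1^{\gamma\tau}=s_1(\eta_1^\tau)^{-1}=s_1s_2^{-1}\eta_1=s_1s_2\eta_1$, so
\[
  \eta_1^{1+\gamma\tau}=s_1s_2\,\eta_1^2 .
\]
Combining this with \cref{prop:norm_eta1} gives $\eta_1^2=\pm(\varepsilon')^{-h_{k'}/2}$. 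The sign is $+$ because $\eta_1$ is real (\cref{prop:real}), so $\eta_1^2>0$, and $\varepsilon'>1$. Note that $h_{k'}/2$ is an integer, since $h_{k'}$ is even by \cref{prop:hkp}; this is also implicit in \cref{prop:norm_eta1}.

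For the second claim, the hypothesis $2^{(p-1)/4}\equiv(-1)^{(p-1)/8}\pmod p$ means exactly $s_1=s_2$. This holds because both are $\pm1$ and $p>2$, so a congruence modulo $p$ between them is an equality. Then $\eta_1^{\gamma\tau}=\eta_1$, so $\eta_1\in k'$, and being a unit it lies in $E_{k'}=\braket{-1,\varepsilon'}$. Write $\eta_1=\pm(\varepsilon')^e$. Then $(\varepsilon')^{2e}=(\varepsilon')^{-h_{k'}/2}$, and since $\varepsilon'$ has infinite order, $h_{k'}/2=-2e$ is even, i.e.\ $4\mid h_{k'}$.

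The only delicate step is correctly identifying the action of $\gamma$ on $\sqrt2$, hence that $k'$ is fixed by $\gamma\tau$. The sign bookkeeping then follows from the positivity observations above.
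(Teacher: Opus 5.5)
Your proposal is correct and follows essentially the same route as the paper: combine $\eta_1^{1+\gamma\tau}=N_{k_1/k'}(\eta_1)$ from \cref{prop:norm_eta1} with $\eta_1^{1-\gamma\tau}=\pm1\equiv(-1)^{(p-1)/8}2^{(p-1)/4}\pmod p$ from \cref{prop:eta1_gt}, then use that $E_{k'}=\braket{-1,\varepsilon'}$ to get $4\mid h_{k'}$. You also spell out two details the paper leaves implicit, namely that $\gamma\tau$ fixes $\sqrt{2p}$ and that the sign in $\eta_1^2=(\varepsilon')^{-h_{k'}/2}$ is $+$ by positivity.
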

\begin{proof}
  From \cref{prop:norm_eta1} we have
  \[
    \eta_1^{1+\gamma\tau}=N_{k_1/k'}(\eta_1)=\pm(\varepsilon')^{-h_{k'}/2}.
  \]
  On the other hand, from \cref{prop:eta1_gt} we have
  \[
    \eta_1^{1-\gamma\tau}=\eta_1^{(1+\gamma)-\gamma(1+\tau)}=\pm1\equiv(-1)^{(p-1)/8}2^{(p-1)/4}\pmod p.
  \]
  Multiplying them we obtain
  $\eta_1^2=(\varepsilon')^{-h_{k'}/2}$.

  Assume $2^{(p-1)/4}\equiv(-1)^{(p-1)/8}\pmod p$,
  then we obtain $\eta_1^{\gamma\tau}=\eta_1$ from the above.
  This means $\eta_1\in k'$.
  Also,
  since $\eta_1^2=(\varepsilon')^{-h_{k'}/2}$ and
  $\varepsilon'$ is the fundamental unit of $k'$,
  $h_{k'}/2$ must be even.
\end{proof}

The following proposition determines
the behavior of our iteration in the first layer.
\begin{proposition}
  \label{prop:1st}
  We have the following.
  \begin{enumerate}
    \item If $2^{(p-1)/4}\not\equiv(-1)^{(p-1)/8}\pmod p$,
    then $a_1=0$ and $N_{k'/\Q}(\varepsilon')=1$.
    \item If $p\equiv9\pmod{16}$ and $2^{(p-1)/4}\equiv-1\pmod p$,
    then $a_1=1$, $N_{k'/\Q}(\varepsilon')=-1$,
    and $v_1^{(1)}=\lvert\varepsilon c_1\eta_1\rvert$.
    \item If $p\equiv1\pmod{16}$ and $2^{(p-1)/4}\equiv1\pmod p$,
    then $a_1\ge1$ and $v_1^{(1)}=\lvert\eta_1\rvert$.
    Moreover,
    if $a_1=1$ then $N_{k'/\Q}(\varepsilon')=1$,
    and if $a_1\ge2$ then
    $v_1^{(j)}=\sqrt{v_1^{(j-1)}}$ for $2\le j<a_1$ and
    \[
      v_1^{(a_1)}=\begin{cases}
        \sqrt{v_1^{(a_1-1)}}&(N_{k'/\Q}(\varepsilon')=1),\\
        \varepsilon c_1\sqrt{v_1^{(a_1-1)}}&(N_{k'/\Q}(\varepsilon')=-1).
      \end{cases}
    \]
  \end{enumerate}
\end{proposition}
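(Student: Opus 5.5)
The plan is to run the first step of \cref{def:iter} explicitly. Here $n=1$, $a_0=0$, $E''_0=E_0$ and $B_1^{(0)}=(-1,\varepsilon,c_1,\eta_1)$, and we test the conditions of \cref{cor:main} (with no condition (5), since $m=p$ is prime). Write $s_\gamma=\eta_1^{1+\gamma}$ and $s_\tau=\eta_1^{1+\tau}$; by \cref{prop:eta1_gt} these are $\pm1$, with $s_\gamma\equiv2^{(p-1)/4}\pmod p$ and $s_\tau=(-1)^{(p-1)/8}$. The other actions are $\varepsilon^{1+\gamma}=\varepsilon^2$, $\varepsilon^{1+\tau}=-1$, $c_1^{1+\gamma}=N_{1,0}(c_1)=-1$ by \cref{prop:norm_gam}, and $c_1^{1+\tau}=c_1^2$.

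For $v=(-1)^a\varepsilon^bc_1^c\eta_1^d$ we get $v^{1+\gamma}=\varepsilon^{2b}(-1)^cs_\gamma^d$ and $v^{1+\tau}=c_1^{2c}(-1)^bs_\tau^d$. Since $-1\notin V^2$, conditions (1) and (2) become $(-1)^c=s_\gamma^d$ and $(-1)^b=s_\tau^d$. For condition (3), $\sqrt{v^{1+\gamma}}^{1+\tau}=\pm\varepsilon^{b(1+\tau)}=(-1)^b$ and $\sqrt{v^{1+\tau}}^{1+\gamma}=(-1)^c$, so it becomes $b\equiv c\pmod 2$. Condition (4) only fixes $a$.

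If $d=0$, the vector is trivial mod $V^2$. If $d=1$, we need $s_\gamma=s_\tau$. This yields the trichotomy:
\begin{itemize}
\item if $s_\gamma\ne s_\tau$, no $v$ exists and $a_1=0$;
\item if $s_\gamma=s_\tau=-1$, i.e.\ $p\equiv9\pmod{16}$ and $2^{(p-1)/4}\equiv-1$, then $b=c=1$ and $v_1^{(1)}=\lvert\varepsilon c_1\eta_1\rvert$;
\item if $s_\gamma=s_\tau=1$, then $v_1^{(1)}=\lvert\eta_1\rvert$, which is minimal in the index order since it only needs $b_3$.
\end{itemize}

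For the norm of $\varepsilon'$, I will combine \cref{prop:hkp} ($v_2(h_{k'})=a_1+1$) with \cref{prop:norm_eta1} and \cref{prop:eta1_kp}. Taking $N_{k'/\Q}$ of $N_{k_1/k'}(\eta_1)=\pm(\varepsilon')^{-h_{k'}/2}$, the left side is $N_{k/\Q}(\pm\eta_0)=1$ because $\eta_0=\pm1$. Hence $N_{k'/\Q}(\varepsilon')^{h_{k'}/2}=1$. When $a_1=0$ the exponent $h_{k'}/2$ is odd, so $N_{k'/\Q}(\varepsilon')=1$; this settles case (1).

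In case (2) I will show $a_1=1$ exactly by re-running the above linear algebra for the basis $B_1^{(1)}$ with $\sqrt{\varepsilon c_1\eta_1}$ replacing $\eta_1$. By \cref{prop:norm_gam} and the norm computations, $\sqrt{\varepsilon c_1\eta_1}$ has $\gamma$- and $\tau$-norms $\pm$ units of $E_0\times C_1$ with signs that block conditions (1)--(3) for $d=1$. To get $N_{k'/\Q}(\varepsilon')=-1$, I will argue by contradiction: if $N_{k'/\Q}(\varepsilon')=1$, then $\varepsilon'$ is, up to a rational factor, a square times $\sqrt2$ or $\sqrt p$ in $k_1$. Together with $\eta_1^2=(\varepsilon')^{-h_{k'}/2}$, this would produce an extra square root in $E_1$ and force $a_1\ge2$.

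Case (3) carries the main difficulty. Here \cref{prop:eta1_kp} gives $\eta_1=\pm(\varepsilon')^{-h_{k'}/4}\in k'$, and the exponent $h_{k'}/4$ has $2$-adic valuation $a_1-1$. The successive $\gamma$-stable square roots are therefore $\pm(\varepsilon')^{-h_{k'}/2^{j+1}}$ for $j<a_1$, which is the claim $v_1^{(j)}=\sqrt{v_1^{(j-1)}}$. At the last step the root is an odd power $(\varepsilon')^{\mathrm{odd}}$. If $N_{k'/\Q}(\varepsilon')=1$, then $\varepsilon'$ is itself a square in $k_1$ (since $k_1=k'(\sqrt2)$ and the $\mathbb{Z}$-rank of $E_1$ is $3$), giving $v_1^{(a_1)}=\sqrt{v_1^{(a_1-1)}}$. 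If $N_{k'/\Q}(\varepsilon')=-1$, then instead $\varepsilon c_1\varepsilon'$ is totally positive and has trivial $\gamma$- and $\tau$-norms up to squares, so \cref{thm:main} gives $v_1^{(a_1)}=\varepsilon c_1\sqrt{v_1^{(a_1-1)}}$. When $a_1=1$ and $N_{k'/\Q}(\varepsilon')=-1$, the same element would give a second square root, contradicting $a_1=1$; hence $N_{k'/\Q}(\varepsilon')=1$ in that subcase.

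The main obstacle is the sign bookkeeping in this final step, namely checking conditions (1)--(4) of \cref{cor:main} for $\varepsilon c_1(\varepsilon')^{\mathrm{odd}}$. I would do it directly with \cref{prop:sign_c} and the identity $N_{k_1/k'}(\varepsilon c_1)=\varepsilon^{1+\gamma\tau}c_1^{1+\gamma\tau}$.
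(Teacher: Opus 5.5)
Your argument is correct in substance. The parity computation on $B_1^{(0)}=(-1,\varepsilon,c_1,\eta_1)$ and your proof of case~(1), via $N_{k'/\Q}(\varepsilon')^{h_{k'}/2}=1$ with $h_{k'}/2$ odd, coincide with the paper's. After that you take a longer route than the paper.

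The paper gets all remaining claims from one observation you do not use. Both $\gamma$ and $\tau$ restrict to the nontrivial automorphism of $k'$, so $\eta_1=\pm(\varepsilon')^{-h_{k'}/4}$ gives $(-1)^{(p-1)/8}=\eta_1^{1+\tau}=N_{k'/\Q}(\varepsilon')^{-h_{k'}/4}$. For $p\equiv9\pmod{16}$ this forces $N_{k'/\Q}(\varepsilon')=-1$ and $h_{k'}/4$ odd, hence $a_1=1$ by \cref{prop:hkp}. For $p\equiv1\pmod{16}$ with $a_1=1$ it forces $N_{k'/\Q}(\varepsilon')=1$. At the last step, $u=(\varepsilon')^{-h_{k'}/2^{a_1+1}}$ is an odd power, so $u^{1+\gamma}=u^{1+\tau}=N_{k'/\Q}(\varepsilon')$. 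Then \cref{cor:main} selects $u$ or $\varepsilon c_1u$ at once. So there is no sign bookkeeping, and neither \cref{prop:sign_c} nor $N_{k_1/k'}(\varepsilon c_1)$ is needed.

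Your replacements (a second parity pass on $B_1^{(1)}$, and contradiction arguments that produce an extra square root) are valid but longer. Two precisions on them:
\begin{itemize}
\item In the second pass it is not signs that block the conditions. The $\gamma$- and $\tau$-norms of $\sqrt{\lvert\varepsilon c_1\eta_1\rvert}$ are $\pm\varepsilon$ and $\pm c_1$, which have odd exponent in the basis.
\item In the subcase $a_1=1$ of (3), $N_{k'/\Q}(\varepsilon')=-1$ is excluded immediately by $\eta_1^{1+\tau}=1$, without any square-root construction.
\end{itemize}

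The one step you must repair is the claim that $N_{k'/\Q}(\varepsilon')=1$ implies $\varepsilon'\in E_1^2$. It is true, but the rank of $E_1$ has nothing to do with it. Either of the following works:
\begin{itemize}
\item Use $(1+\varepsilon')^2=(\operatorname{Tr}\varepsilon'+2)\varepsilon'$. Since $k'(\sqrt{\varepsilon'})/k'$ is unramified outside $2$, the square-free part of $\operatorname{Tr}\varepsilon'+2$ divides $2p$. It is not $1$ or $2p$, because $\varepsilon'$ is not a square in $k'$. So it is $2$ or $p$, both of which are squares in $k_1$.
\item As the paper effectively does, apply \cref{thm:main} to $\varepsilon'$: its $\gamma$- and $\tau$-norms are both $1$ and $\varepsilon'>0$.
\end{itemize}
The same repair covers your case~(2) contradiction argument, which relies on the same fact.
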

\begin{proof}
  Since $B_1^{(0)}=(-1,\varepsilon,c_1,\eta_1)$ in our iteration,
  from \cref{cor:main} we obtain that
  $a_1\ge1$ if and only if
  there exist $x_0,x_1,x_2\in\{0,1\}$ such that
  \[
    v=(-1)^{x_0}\varepsilon^{x_1}c_1^{x_2}\eta_1
  \]
  satisfies the conditions of \cref{cor:main}.
  Both
  \[
    v^{1+\gamma}=\varepsilon^{2x_1}(-1)^{x_2}\eta_1^{1+\gamma},\quad
    v^{1+\tau}=(-1)^{x_1}c_1^{2x_2}\eta_1^{1+\tau}
  \]
  are square in $V_1^{(0)}$
  if and only if $2^{(p-1)/4}\equiv(-1)^{x_2}\pmod p$
  and $(p-1)/8\equiv x_1\pmod2$
  from \cref{prop:eta1_gt}.
  Under these conditions,
  we have
  \begin{align}
    \sqrt{v^{1+\gamma}}^{1+\tau}&=(\varepsilon^{x_1})^{1+\tau}=(-1)^{x_1},\\
    \sqrt{v^{1+\tau}}^{1+\gamma}&=(c_1^{x_2})^{1+\gamma}=(-1)^{x_2}.
  \end{align}
  Therefore, if $2^{(p-1)/4}\not\equiv(-1)^{(p-1)/8}\pmod p$,
  then $a_1=0$ must hold
  (this result is known in \cite{ozakitaya_manu_1997}).
  In this case, $N_{k'/\Q}(\varepsilon')=1$ also holds
  from \cref{prop:eta1_kp}, since $h_{k'}/2$ is odd from \cref{prop:hkp}.
  Also, if $2^{(p-1)/4}\equiv(-1)^{(p-1)/8}\pmod p$,
  then $a_1\ge1$ and
  \[
    % v_1^{(1)}=v=\lvert(\varepsilon c_1)^{x_1}\eta_1\rvert.
    v_1^{(1)}=v=\begin{cases}
      \lvert\eta_1\rvert&(p\equiv1\pmod{16}),\\\lvert\varepsilon c_1\eta_1\rvert&(p\equiv9\pmod{16}).
    \end{cases}
  \]

  Assume $2^{(p-1)/4}\equiv(-1)^{(p-1)/8}\pmod p$.
  From \cref{prop:eta1_kp}, we have
  $\eta_1=\pm(\varepsilon')^{-h_{k'}/4}$,
  which yields
  $(-1)^{(p-1)/8}=\eta_1^{1+\tau}=N_{k'/\Q}(\varepsilon')^{-h_{k'}/4}$.
  Hence if $p\equiv9\pmod{16}$, then $a_1=1$ and $N_{k'/\Q}(\varepsilon')=-1$.
  Also, if $p\equiv1\pmod{16}$ and $a_1=1$, then $N_{k'/\Q}(\varepsilon')=1$.

  Assume $2^{(p-1)/4}\equiv(-1)^{(p-1)/8}=1\pmod p$
  and $a_1\ge2$.
  Since
  $\eta_1=\pm(\varepsilon')^{-h_{k'}/4}$,
  we must have $v_1^{(j)}=(\varepsilon')^{-h_{k'}/2^{j+1}}$
  for $1\le j<a_1$.
  Therefore we have
  \[
    \sqrt{v_1^{(a_1-1)}}^{1+\gamma}=\sqrt{v_1^{(a_1-1)}}^{1+\tau}
    =N_{k'/\Q}(\varepsilon')
  \]
  since $h_{k'}/2^{a_1+1}$ is odd from \cref{prop:hkp}.
  Similarly as above,
  we can show that $\sqrt{v_1^{(a_1-1)}}$ (resp.~$\varepsilon c_1\sqrt{v_1^{(a_1-1)}}$)
  satisfies the conditions of \cref{cor:main}
  if $N_{k'/\Q}(\varepsilon')=1$ (resp.~$N_{k'/\Q}(\varepsilon')=-1$).
\end{proof}

As applications of \cref{thm:main},
the following interesting properties of $a_n$
are derived.
\begin{theorem}
  \label{thm:a1a2_n}
  Assume $p\equiv1\pmod{16}$ and $N_{k'/\Q}(\varepsilon')=-1$.
  Then we have $a_1\ge2$.
  Moreover,
  \begin{enumerate}
    \item $a_2>a_1\iff a_1\ge3$ when $(-1)^{(p-1)/16}\eta_1>0$,
    \item $a_2>a_1\iff a_1=2$ when $(-1)^{(p-1)/16}\eta_1<0$.
  \end{enumerate}
  % if $p\equiv1\pmod{32}$, then
  % \begin{enumerate}
  %   \item $a_1\ge3\iff a_2>a_1$ when $\eta_1>0$,
  %   \item $a_1\ge3\iff a_2=a_1$ when $\eta_1<0$.
  % \end{enumerate}
  % If $p\equiv17\pmod{32}$, then
  % \begin{enumerate}
  %   \item $a_1\ge3\iff a_2=a_1$ when $\eta_1>0$,
  %   \item $a_1\ge3\iff a_2>a_1$ when $\eta_1<0$.
  % \end{enumerate}
\end{theorem}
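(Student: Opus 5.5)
The proof has two parts: first $a_1\ge2$, which comes directly from \cref{prop:1st}; then the equivalences, which come from running the iteration of \cref{def:iter} in the second layer, checking the conditions through \cref{cor:main}.

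\textbf{Step 1: $a_1\ge2$.} By \cref{prop:1st}~(1), if $2^{(p-1)/4}\not\equiv(-1)^{(p-1)/8}\pmod p$ then $N_{k'/\Q}(\varepsilon')=1$. Since $N_{k'/\Q}(\varepsilon')=-1$ here, we must have $2^{(p-1)/4}\equiv(-1)^{(p-1)/8}=1\pmod p$. Case~(3) of \cref{prop:1st} then gives $a_1\ge1$. It also says $a_1=1$ forces $N_{k'/\Q}(\varepsilon')=1$, so in fact $a_1\ge2$.

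\textbf{Step 2: the basis of $E''_1$.} From the proof of \cref{prop:1st}, $v_1^{(j)}=(\varepsilon')^{-h_{k'}/2^{j+1}}$ for $j<a_1$, and $v_1^{(a_1)}=\varepsilon c_1 u_0$ with $u_0=\sqrt{v_1^{(a_1-1)}}=\pm(\varepsilon')^{-h_{k'}/2^{a_1+1}}$. By \cref{prop:hkp} the exponent $h_{k'}/2^{a_1+1}$ is odd. Put $w=\sqrt{v_1^{(a_1)}}$. Then $B_1^{(a_1)}=(-1,\varepsilon,c_1,w)$ and
\[
  \eta_1=\pm(\varepsilon')^{-h_{k'}/4}=\pm\bigl(w^2/(\varepsilon c_1)\bigr)^{2^{a_1-2}}.
\]
I would also record the signs of $w^{1+\gamma}$ and $w^{1+\tau}$. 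Both equal $\pm\sqrt{(v_1^{(a_1)})^{1+\gamma}}$ (resp.\ with $\tau$), and these are computable from $N(\varepsilon')=-1$, $\varepsilon^{1+\tau}=-1$, $c_1^{1+\gamma}=-1$ and \cref{prop:sign_c}.

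\textbf{Step 3: the second layer.} Since $p\equiv1\pmod4$, \cref{prop:index_gam} gives $\delta_1=0$. So $a_2>a_1$ if and only if $v_2^{(1)}$ exists. By \cref{cor:fund_1_eta}, such a $v$ must have the form $v=b\,c_2^{x}c_2^{\gamma y}\eta_2^{1+\gamma}$, where $b$ is a product of entries of $(-1,\varepsilon,c_1,w)$. I would use the following inputs:
\begin{itemize}
  \item $\eta_2^{1+\gamma^2}=\eta_1$, by \cref{prop:norm_gam}, since $(-1)^{(p-1)/4}=1$;
  \item $\eta_2^{1+\tau}=(-1)^{(p-1)/16}$, by \cref{ex:norm_tau};
  \item $c_2^{1+\gamma^2}=-c_1$.
\end{itemize}
With these I would test the conditions of \cref{cor:main} in $V_2^{(0)}$. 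The $\gamma$-condition reads
\[
  v^{1+\gamma}=b^{1+\gamma}\,c_2^{(\dots)}\,\eta_1\,\eta_2^{2\gamma}\in (V_2^{(0)})^2 ,
\]
which, after substituting the formula for $\eta_1$, is a condition in $E''_1\times C_2$.

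For $a_1\ge3$, $\eta_1$ equals $\pm$ (a square in $E''_1$) times an even power of $\varepsilon c_1$, so it can be absorbed with $b$ essentially trivial. For $a_1=2$, $\eta_1=\pm w^2/(\varepsilon c_1)$, and $b^{1+\gamma}$ together with the $c_2$-part must produce $\varepsilon c_1$ modulo squares; this forces $b$ to involve $w$ and $\varepsilon$. In both cases the parity of $x,y$ is then pinned down by the $C_2$-component, using \cref{lem:etc}.

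\textbf{Step 4: signs, the main obstacle.} What remains is the sign bookkeeping: the total positivity condition $v>0$ and the commutation condition (3) of \cref{cor:main}. These involve
\begin{itemize}
  \item the sign of $\eta_1$ (the leftover $\pm$ above);
  \item $\eta_2^{1+\tau}=(-1)^{(p-1)/16}$;
  \item the signs of $c_2,c_2^\gamma$, from \cref{prop:sign_c};
  \item the signs of $w^{1+\gamma}$ and $w^{1+\tau}$.
\end{itemize}
I would expect everything to collapse to a single sign test on $(-1)^{(p-1)/16}\eta_1$. This test should come out with opposite outcomes in the cases $a_1\ge3$ and $a_1=2$, because in the latter case $b$ contains $w$ and contributes the extra sign $N(\varepsilon')=-1$. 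That would yield exactly the stated dichotomy. I would carry this out through the matrices $G,T$ of \cref{thm:lin_main} for the basis $(-1,\varepsilon,c_1,w,c_2,c_2^\gamma,\eta_2,\eta_2^\gamma)$, and the main risk is an error in the sign entries of the top row.
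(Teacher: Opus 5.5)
You set things up the way the paper does: reduce via \cref{cor:fund_1_eta} to $v=b\,c_2^{x_4+x_5\gamma}\eta_2^{1+\gamma}$, with $b$ a product of $-1,\varepsilon,c_1,w$, then test \cref{cor:main} in $V_2^{(0)}$. But Step~2 has an exponent error that derails the case $a_1=2$. Since $w^2/(\varepsilon c_1)=u_0=\pm(\varepsilon')^{-h_{k'}/2^{a_1+1}}$, your expression $(w^2/(\varepsilon c_1))^{2^{a_1-2}}$ equals $\pm(\varepsilon')^{-h_{k'}/8}=\pm\sqrt{\lvert\eta_1\rvert}$, not $\eta_1$. The correct formula is $\eta_1=\pm(w^2/(\varepsilon c_1))^{2^{a_1-1}}$. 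So for $a_1=2$ you have $\eta_1=\pm(w^2/(\varepsilon c_1))^2$, a signed square in $E''_1$, not $\pm w^2/(\varepsilon c_1)$.

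Run with your formula, Step~3 fails. The $\gamma$-condition would force $x_3=1$, to supply the odd power of $\varepsilon$ via $w^{1+\gamma}=\pm\varepsilon$. It would also force $x_4=x_5=1$, to supply $c_1$ via $c_2^{1+\gamma^2}=-c_1$. But then $v^{1+\tau}=(-1)^{x_1}c_1^{2x_2}(\pm c_1)c_2^{2+2\gamma}$ contains $c_1$ to an odd power and is not a square in $V_2^{(0)}$. You would conclude $a_2=a_1$ whenever $a_1=2$, which contradicts part~(2). The mechanism you anticipate in Step~4, with $b$ containing $w$, does not occur.

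Here is what actually happens. For every $a_1\ge2$, $\sqrt{\lvert\eta_1\rvert}=(\varepsilon')^{-h_{k'}/8}=u_0^{2^{a_1-2}}$ lies in $E''_1$. Hence the $\gamma$- and $\tau$-conditions are the same for all $a_1\ge2$: they force $x_1=x_3=x_4=x_5=0$ and $(-1)^{x_2}\eta_1>0$. The dichotomy arises only in condition~(3) of \cref{cor:main}:
\[
\sqrt{v^{1+\gamma}}^{1+\tau}=\bigl((\varepsilon')^{-h_{k'}/8}\eta_2^{\gamma}\bigr)^{1+\tau}=(-1)^{h_{k'}/8+(p-1)/16},\qquad \sqrt{v^{1+\tau}}^{1+\gamma}=(c_1^{x_2})^{1+\gamma}=(-1)^{x_2}=\sgn\eta_1 .
\]
By \cref{prop:hkp}, $h_{k'}/8=2^{a_1-2}\cdot(\text{odd})$. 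Therefore $a_2>a_1$ if and only if $(-1)^{2^{a_1-2}+(p-1)/16}\eta_1>0$, which gives both parts of the statement.

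So $N_{k'/\Q}(\varepsilon')=-1$ does supply the extra sign, but it enters raised to the power $h_{k'}/8$ through $\sqrt{\lvert\eta_1\rvert}$, not through $w$. The parity of $h_{k'}/8$ is the input your plan is missing.
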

\begin{proof}
  We have $a_1\ge2$ from \cref{prop:1st}.
  Using \cref{cor:main,cor:fund_1_eta},
  we obtain that
  $a_2>a_1$ if and only if
  there exist $x_0,x_1,\dots,x_5\in\{0,1\}$ such that
  \[
    v=(-1)^{x_0}\varepsilon^{x_1}c_1^{x_2}\sqrt{v_1^{(a_1)}}^{x_3}c_2^{x_4+x_5\gamma}\eta_2^{1+\gamma}
  \]
  satisfies the conditions of \cref{cor:main}.
  From \cref{prop:1st},
  we have
  $v_1^{(a_1)}=\varepsilon c_1(\varepsilon')^{-h_{k'}/2^{a_1+1}}$,
  which yields $\sqrt{v_1^{(a_1)}}^{1+\gamma}=\pm\varepsilon$ and
  $\sqrt{v_1^{(a_1)}}^{1+\tau}=\pm c_1$.
  Hence we obtain
  \begin{align}
    v^{1+\gamma}&=\varepsilon^{2x_1}(-1)^{x_2}(\pm\varepsilon)^{x_3}(-c_1)^{x_5}c_2^{x_4-x_5+(x_4+x_5)\gamma}\eta_1\eta_2^{2\gamma},\\
    v^{1+\tau}&=(-1)^{x_1}c_1^{2x_2}(\pm c_1)^{x_3}c_2^{2x_4+2x_5\gamma}
  \end{align}
  using \cref{prop:norm_gam,ex:norm_tau},
  and they are square in $V_2^{(0)}$
  if and only if $x_1=x_3=x_4=x_5=0$ and $(-1)^{x_2}\eta_1>0$.
  Under these conditions,
  we have
  \begin{align}
    \sqrt{v^{1+\gamma}}^{1+\tau}
    &=\Bigl(\pm\sqrt{\lvert\eta_1\rvert}\eta_2^\gamma\Bigr)^{1+\tau}
    =((\varepsilon')^{-h_{k'}/8}\eta_2^\gamma)^{1+\tau}
    =(-1)^{h_{k'}/8+(p-1)/16},\\
    \sqrt{v^{1+\tau}}^{1+\gamma}
    &=(\pm c_1^{x_2})^{1+\gamma}
    =(-1)^{x_2}=\sgn\eta_1.
  \end{align}
  Hence $a_2>a_1$ if and only if
  $(-1)^{2^{a_1-2}+(p-1)/16}\eta_1>0$.
\end{proof}

\begin{theorem}
  \label{thm:a1a2_p}
  Assume $N_{k'/\Q}(\varepsilon')=1$.
  If $a_1\ge1$, then $a_2>a_1$.
  Moreover, if $a_1\ge2$ and $(-1)^{(p-1)/16}\eta_1<0$,
  then $a_2=a_1+1$
  (hence the $\lambda$-invariant is at most $1$
  from \cref{thm:fund_2}).
\end{theorem}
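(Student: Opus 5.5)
We follow the proof of \cref{thm:a1a2_n}. First we pin down the first layer. Since $a_1\ge1$, \cref{prop:1st} excludes case (1). Since $N_{k'/\Q}(\varepsilon')=1$, it also excludes case (2). Hence $p\equiv1\pmod{16}$ and $2^{(p-1)/4}\equiv1\pmod p$. By \cref{prop:1st} together with \cref{prop:eta1_kp,prop:hkp}, in all cases ($a_1=1$ or $a_1\ge2$) we get
\[
  w:=\sqrt{v_1^{(a_1)}}=\pm(\varepsilon')^{-h_{k'}/2^{a_1+2}},
\]
which is an odd power of $\varepsilon'$ up to sign. Since $w\in k'$ and $\gamma\tau$ fixes $k'$, we have $w^\tau=w^\gamma$ on $k_1$. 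Hence $w^{1+\gamma}=w^{1+\tau}=\pm N_{k'/\Q}(\varepsilon')^{\text{odd}}=\pm1$, with the same sign $\sigma$ for both. This is the key difference from the case $N_{k'/\Q}(\varepsilon')=-1$, where $w^{1+\gamma}=\pm\varepsilon$ and $w^{1+\tau}=\pm c_1$.

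For $a_2>a_1$ we use \cref{cor:main,cor:fund_1_eta} with $n=2$, $j=1$. We must find $x_0,\dots,x_5\in\{0,1\}$ such that
\[
  v=(-1)^{x_0}\varepsilon^{x_1}c_1^{x_2}w^{x_3}c_2^{x_4+x_5\gamma}\eta_2^{1+\gamma}
\]
satisfies the conditions of \cref{cor:main}. We compute $v^{1+\gamma}$ and $v^{1+\tau}$ using \cref{prop:norm_gam} and \cref{ex:norm_tau}; the latter gives $\eta_2^{(1+\gamma)(1+\tau)}=1$ for $p\equiv1,17\pmod{32}$. As before, squareness in $V_2^{(0)}$ forces $x_4=x_5=0$. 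It also gives the sign conditions $(-1)^{x_2}\sigma^{x_3}\eta_1>0$ and $(-1)^{x_1}\sigma^{x_3}=1$; here we use that $\eta_1=\pm w^{2^{a_1}}$ is a square in $V_2^{(0)}$ up to sign. These leave a free choice of $x_3$, with $x_1,x_2$ then determined. For condition (3) we compare the two sides as in \cref{thm:a1a2_n}. The left side is $(\pm\sqrt{|\eta_1|}\,\eta_2^\gamma\,\varepsilon^{x_1}w^{x_3})^{1+\tau}$ and the right side is $(\pm c_1^{x_2}w^{x_3})^{1+\gamma}$. Toggling $x_3$ (and hence $x_1$, and possibly $x_2$) multiplies the left side by $\varepsilon^{1+\tau}=-1$, and changes the right side by a factor I expect to be $+1$ in the relevant case. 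So exactly one choice of $x_3$ satisfies (3), and $a_2>a_1$. Checking this sign bookkeeping carefully, especially the dependence on $\sigma$ and on $x_2$, is the main obstacle. If the toggle does not flip the relative sign, I would instead toggle the $\pm$ factor $(-1)^{x_0}$ combined with $c_1$.

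For the second claim, assume $a_1\ge2$ and $(-1)^{(p-1)/16}\eta_1<0$, and suppose $a_2\ge a_1+2$. Then \cref{cor:fund_1_eta} with $j=2$ gives $v_2^{(2)}=u\,c_2^{f(\gamma)}\eta_2$ with $u$ in the span of $-1,\varepsilon,c_1,w,\sqrt{v_2^{(1)}}$. First, condition (1) forces $v^{1+\gamma}$ to be a square; this uses $\eta_2^{1+\gamma^2}=\eta_1$ and $\eta_2^{1+\gamma}=v_2^{(1)}$ up to known factors. Second, condition (2) uses $\eta_2^{1+\tau}=\pm1$ with sign $(-1)^{(p-1)/16}$ from \cref{ex:norm_tau}. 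Running the same sign analysis, I expect conditions (2)--(4) to combine into $(-1)^{(p-1)/16}\eta_1>0$, contradicting the hypothesis; hence $a_2=a_1+1$. Finally $a_2-a_1=1<2=2^{2-1}$, so \cref{cor:fkot} (which rests on \cref{thm:fund_2}) gives $a_{i+1}-a_i\le1$ for all $i\ge2$, and therefore $\lambda_2(k)\le1$.
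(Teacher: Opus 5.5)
There is a genuine gap, and it sits exactly at what you call the key difference. Your formula $w=\pm(\varepsilon')^{-h_{k'}/2^{a_1+2}}$ cannot hold. By \cref{prop:hkp} the $2$-adic valuation of $h_{k'}$ is $a_1+1$, so $h_{k'}/2^{a_1+2}\notin\Z$. What \cref{prop:1st,prop:eta1_kp} give is $w^2=v_1^{(a_1)}=(\varepsilon')^{-h_{k'}/2^{a_1+1}}$, with an \emph{odd} exponent. Hence $w\notin k'$, and since $w^2\in k'$ we get $w^{\gamma\tau}=-w$, i.e.\ $w^\gamma=-w^\tau$ and $w^{1+\gamma}=-w^{1+\tau}$. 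So the two signs are always \emph{opposite}, the reverse of your claim. (The paper proves this via \cref{thm:main}: if both were $1$, then $\lvert w\rvert$ would be a square in $E_1$; if both were $-1$, then $\pm\varepsilon c_1w$ would be. Either contradicts $w$ being a basis element of $E''_1=V_1^{(a_1)}$.)

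This opposite-sign fact is exactly what makes the count work. Write $w^{1+\tau}=-w^{1+\gamma}=(-1)^t$ and $\sgn\eta_1=(-1)^s$. Positivity of $v^{1+\gamma}$ and $v^{1+\tau}$ gives $x_2\equiv s+(t+1)x_3$ and $x_1\equiv tx_3$, so toggling $x_3$ toggles exactly one of $x_1,x_2$. Condition (3) of \cref{cor:main} reads $x_1+x_2\equiv2^{a_1-1}t+(p-1)/16\pmod2$, so there is exactly one solution. Under your same-sign premise, toggling $x_3$ toggles both of $x_1,x_2$ or neither. Then (3) becomes independent of $x_3$ and collapses to a condition on $\sgn\eta_1$, so you would not get $a_2>a_1$ unconditionally.

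Your fallback does not rescue this. The factor $(-1)^{x_0}$ only affects condition (4), and toggling $c_1$ alone makes $v^{1+\gamma}$ negative, since $c_1^{1+\gamma}=-1$. Also, $\sqrt{v^{1+\gamma}}$ and $\sqrt{v^{1+\tau}}$ contain no factor $w^{x_3}$. That factor only contributes the signs $(w^{1+\gamma})^{x_3}$ and $(w^{1+\tau})^{x_3}$ to $v^{1+\gamma}$ and $v^{1+\tau}$. Keeping it would, even with the correct signs, again make (3) independent of $x_3$.

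For the second claim you only state an expectation, and the mechanism you anticipate is not the right one. With the correct system and $a_1\ge2$, the term $2^{a_1-1}t$ is even, so the unique solution has $x_3\equiv s+(p-1)/16$. The hypothesis $(-1)^{(p-1)/16}\eta_1<0$ then forces $x_3=1$, so $v_2^{(1)}=\pm\varepsilon^{x_1}c_1^{x_2}w\,\eta_2^{1+\gamma}$ genuinely contains $w$. In your framework $x_3$ is free, so this is not available.

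Now suppose $a_2\ge a_1+2$. \Cref{cor:fund_1_eta} gives $v_2^{(2)}=\pm\varepsilon^{y_1}c_1^{y_2}w^{y_3}c_2^{y_4+y_5\gamma}\eta_2$. No $\sqrt{v_2^{(1)}}$ occurs, since $v_2^{(2)}\in V_2^{(0)}$. Hence $(v_2^{(2)})^{1+\gamma}=\pm\varepsilon^{2y_1}c_1^{y_5}c_2^{y_4-y_5+(y_4+y_5)\gamma}\eta_2^{1+\gamma}$. This and $v_2^{(1)}$ are both squares in $E_2$, so their quotient $\pm\varepsilon^{x_1-2y_1}c_1^{x_2-y_5}w\,c_2^{-y_4+y_5-(y_4+y_5)\gamma}\in E''_1\times C_2$ is a square. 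That contradicts \cref{lem:etc}, because $w$ appears to the first power.

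So the obstruction already comes from condition (1), not from (2)--(4) combining into a sign condition. Your final deduction of $\lambda_2(k)\le1$ from $a_2=a_1+1$ via \cref{cor:fkot} is fine.
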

\begin{proof}
  From \cref{prop:1st} and our assumptions, we have
  $p\equiv1\pmod{16}$ and $(v_1^{(a_1)})^{2^{a_1-1}}=\lvert\eta_1\rvert$.
  Hence we obtain
  \begin{align}
    \sqrt{v_1^{(a_1)}}^{1+\gamma}&=\pm\sqrt{(v_1^{(a_1)})^{1+\gamma}}=\pm1,\\
    \sqrt{v_1^{(a_1)}}^{1+\tau}&=\pm\sqrt{(v_1^{(a_1)})^{1+\tau}}=\pm1.
  \end{align}
  If both are $1$, then $\sqrt{v_1^{(a_1)}}$ is square in $E_1$
  from \cref{thm:main}, which is a contradiction.
  Similarly, if both are $-1$, then $\varepsilon c_1\sqrt{v_1^{(a_1)}}$ is square in $E_1$
  from \cref{thm:main}, which is also a contradiction.
  Therefore they have different signs,
  so we define $t\in\{0,1\}$ to satisfy
  \[
    \sqrt{v_1^{(a_1)}}^{1+\tau}=-\sqrt{v_1^{(a_1)}}^{1+\gamma}=(-1)^t.
  \]
  Using \cref{cor:main,cor:fund_1_eta},
  we obtain that
  $a_2>a_1$ if and only if
  there exist $x_0,x_1,\dots,x_5\in\{0,1\}$ such that
  \[
    v=(-1)^{x_0}\varepsilon^{x_1}c_1^{x_2}\sqrt{v_1^{(a_1)}}^{x_3}c_2^{x_4+x_5\gamma}\eta_2^{1+\gamma}
  \]
  satisfies the conditions of \cref{cor:main}.
  Both
  \begin{align}
    v^{1+\gamma}&=\varepsilon^{2x_1}(-1)^{x_2}(-1)^{(t+1)x_3}(-c_1)^{x_5}c_2^{x_4-x_5+(x_4+x_5)\gamma}\eta_1\eta_2^{2\gamma},\\
    v^{1+\tau}&=(-1)^{x_1}c_1^{2x_2}(-1)^{tx_3}c_2^{2x_4+2x_5\gamma}
  \end{align}
  are square in $V_2^{(0)}$ if and only if
  they are positive and $x_4=x_5=0$.
  Under these conditions,
  we have
  \begin{align}
    \sqrt{v^{1+\gamma}}^{1+\tau}
    &=\Bigl(\pm\varepsilon^{x_1}\sqrt{\lvert\eta_1\rvert}\eta_2^\gamma\Bigr)^{1+\tau}
    =(-1)^{x_1+2^{a_1-1}t+(p-1)/16},\\
    \sqrt{v^{1+\tau}}^{1+\gamma}
    &=(\pm c_1^{x_2})^{1+\gamma}
    =(-1)^{x_2}.
  \end{align}
  Therefore,
  $a_2>a_1$ if and only if
  the system of linear equations
  \begin{align}
    x_2+(t+1)x_3&\equiv s\pmod2,\\
    x_1+tx_3&\equiv0\pmod2,\\
    x_1+x_2&\equiv2^{a_1-1}t+\frac{p-1}{16}\pmod2
  \end{align}
  has a solution $(x_1,x_2,x_3)\in\{0,1\}^3$,
  where $\sgn\eta_1=(-1)^s$.
  We can check that this always has a (unique) solution,
  hence $a_2>a_1$ holds.

  In the case where $a_1\ge2$ and $(-1)^{(p-1)/16}\eta_1<0$,
  the solution $(x_1,x_2,x_3)$ of the above equations
  must satisfy $x_3=1$.
  Assume $a_2>a_1+1$.
  Then we can write
  \[
    v_2^{(2)}=\pm\varepsilon^{y_1}c_1^{y_2}\sqrt{v_1^{(a_1)}}^{y_3}c_2^{y_4+y_5\gamma}\eta_2
  \]
  with some $y_1,\dots,y_5\in\{0,1\}$,
  using \cref{cor:fund_1_eta}.
  Hence,
  \[
    (v_2^{(2)})^{1+\gamma}=\pm\varepsilon^{2y_1}c_1^{y_5}c_2^{y_4-y_5+(y_4+y_5)\gamma}\eta_2^{1+\gamma}
  \]
  is square in $E_2$.
  Also
  \(
    v_2^{(1)}=v=\pm\varepsilon^{x_1}c_1^{x_2}\sqrt{v_1^{(a_1)}}\eta_2^{1+\gamma}
  \)
  is square in $E_2$,
  hence so is
  \[
    v/(v_2^{(2)})^{1+\gamma}=\pm\varepsilon^{x_1-2y_1}c_1^{x_2-y_5}\sqrt{v_1^{(a_1)}}c_2^{-y_4+y_5-(y_4+y_5)\gamma}.
  \]
  This contradicts \cref{lem:etc}.
  Therefore $a_2=a_1+1$ holds.
\end{proof}

Although the above proofs are based on our iteration,
unlike \cref{algo:spec},
$p$ is not given specifically.
As in these proofs,
we can narrow down the candidates for $a_n$
for an abstract $p$ to some extent
under the certain conditions.
The following is a recursive algorithm
for executing this computationally.
% In the proof of \cref{thm:a1a2_n},
% we did not need to consider the signs of
% $\sqrt{v_1^{(a_1)}}^{1+\gamma}$ and $\sqrt{v_1^{(a_1)}}^{1+\tau}$,
% but in general,
% we need to distinguish four cases based on such signs.
\begin{algorithm}[for abstract $p$]
  \label{algo:abst}
  Here $p$ is not given specifically.
  Let $n\ge1$ and let $b$ be an integer satisfying $b\equiv1\pmod8$.
  Also $e_{ij}\in\{\pm1\}$ for $1\le i\le n,\,0\le j\le2^{i-1}-1$ are given.
  This algorithm computes candidates for $(a_0,a_1,\dots,a_n)$
  where $p$ runs through primes satisfying $p\equiv b\pmod{2^{n+3}}$ and
  $\sgn\eta_i^{\gamma^j}=e_{ij}$ for all $i,j$.
  \begin{enumerate}
    \item Initialize $a_0\leftarrow0,\,\dots,\,a_n\leftarrow0$.
    \item Initialize $G$ and $T$ using
    \cref{prop:norm_gam,prop:norm_tau,prop:eta0,prop:sign_c},
    and using $e_{ij}$.
    \item Let $A$ be the $(2^{n+2}+1)\times2^{n+1}$-matrix
    formed by stacking $G$, $T$, and the top row of $(GT-TG)/2$ vertically.
    Find a non-trivial $\mathbf{v}=(x_0,\dots,x_{2^{n+1}-1})\in\{0,1\}^{2^{n+1}}$
    which satisfies $A\mathbf{v}\equiv\mathbf{0}\pmod2$
    and the maximum index $r$ with $x_r=1$ is minimum.
    If there is not such $\mathbf{v}$, add $(a_0,\dots,a_n)$ to the list of candidates and
    return to the caller.
    \label{it:rec}
    \item Set $a_i\leftarrow a_i+1$ for all $l\le i\le n$,
    where $l$ is the maximal integer satisfying $2^l\le r$.
    \item For $(s,s')\in\{0,1\}^2$, do:
    \label{it:four}
    \begin{enumerate}
      \item Transform $G$ and $T$ by elementary operations as in \cref{algo:spec}.
      Set the $(0,r)$-component of $G$ to $s$
      and the $(0,r)$-component of $T$ to $s'$.
      \item Recursively call (\labelcref{it:rec}) with these $G$, $T$, and $(a_0,\dots,a_n)$.
    \end{enumerate}
    \item Return to the caller.
  \end{enumerate}
\end{algorithm}

As a note on implementation,
proper use of lazy evaluations can avoid
a brute-force execution of all possibilities of $e_{ij}$.
We obtained the following results
through this algorithm.
\begin{example}
  Assume $p\equiv17\pmod{32}$.
  All possible sequences $(a_0,a_1,\dots,a_4)$
  which are valid under the conditions given in the captions
  are contained in \cref{tab:1,tab:2,tab:3,tab:4}.
\end{example}

\begin{table}[p]
  \centering
  \begin{minipage}{0.45\textwidth}
    \centering
    \caption{\\\(N_{k'/\Q}(\varepsilon')=-1,\\\eta_1>0,\,a_2\le5\)}
    \label{tab:1}
    \begin{tabular}{rrrrr}
      % \toprule
      % $(a_0,a_1,a_2,a_3,a_4)$\\
      % \midrule
      % $(0,0,0,0,0)$\\
      % $(0,0,0,0,0)$\\
      % $(0,0,0,0,0)$\\
      % $(0,0,0,0,0)$\\
      % \bottomrule
      \toprule
      $a_0$&$a_1$&$a_2$&$a_3$&$a_4$\\
      \midrule
      $0$&$2$&$3$&$4$&$4$\\
      $0$&$2$&$3$&$4$&$5$\\
      $0$&$2$&$4$&$5$&$5$\\
      $0$&$2$&$4$&$5$&$6$\\
      $0$&$2$&$5$&$6$&$6$\\
      $0$&$2$&$5$&$6$&$7$\\
      $0$&$2$&$5$&$7$&$7$\\
      $0$&$2$&$5$&$7$&$8$\\
      $0$&$2$&$5$&$7$&$9$\\
      $0$&$3$&$3$&$3$&$3$\\
      $0$&$4$&$4$&$4$&$4$\\
      $0$&$5$&$5$&$5$&$5$\\
      \bottomrule
    \end{tabular}
  \end{minipage}
  \begin{minipage}{0.45\textwidth}
    \centering
    \caption{\\\(N_{k'/\Q}(\varepsilon')=-1,\\\eta_1<0,\,a_1\le3\)}
    \label{tab:2}
    \begin{tabular}{rrrrr}
      \toprule
      $a_0$&$a_1$&$a_2$&$a_3$&$a_4$\\
      \midrule
      $0$&$2$&$2$&$2$&$2$\\
      $0$&$3$&$4$&$5$&$6$\\
      $0$&$3$&$5$&$6$&$6$\\
      $0$&$3$&$5$&$7$&$8$\\
      $0$&$3$&$5$&$7$&$9$\\
      $0$&$3$&$6$&$6$&$6$\\
      $0$&$3$&$6$&$7$&$8$\\
      $0$&$3$&$6$&$8$&$9$\\
      $0$&$3$&$6$&$8$&$10$\\
      $0$&$3$&$6$&$9$&$9$\\
      $0$&$3$&$6$&$9$&$10$\\
      $0$&$3$&$6$&$9$&$11$\\
      $0$&$3$&$6$&$9$&$12$\\
      \bottomrule
    \end{tabular}
  \end{minipage}
  \begin{minipage}{0.45\textwidth}
    \centering
    \caption{\\\(N_{k'/\Q}(\varepsilon')=1,\\\eta_1>0,\,a_2\le4\)}
    \label{tab:3}
    \begin{tabular}{rrrrr}
      \toprule
      $a_0$&$a_1$&$a_2$&$a_3$&$a_4$\\
      \midrule
      $0$&$0$&$0$&$0$&$0$\\
      $0$&$1$&$2$&$3$&$4$\\
      $0$&$1$&$3$&$3$&$3$\\
      $0$&$1$&$4$&$4$&$4$\\
      $0$&$1$&$4$&$5$&$5$\\
      $0$&$1$&$4$&$6$&$6$\\
      $0$&$1$&$4$&$6$&$7$\\
      $0$&$1$&$4$&$6$&$8$\\
      $0$&$2$&$3$&$3$&$3$\\
      $0$&$2$&$3$&$4$&$4$\\
      $0$&$2$&$3$&$4$&$5$\\
      $0$&$3$&$4$&$4$&$4$\\
      $0$&$3$&$4$&$5$&$5$\\
      $0$&$3$&$4$&$5$&$6$\\
      \bottomrule
    \end{tabular}
  \end{minipage}
  \begin{minipage}{0.45\textwidth}
    \centering
    \caption{\\\(N_{k'/\Q}(\varepsilon')=1,\\\eta_1<0,\,a_1\le3\)}
    \label{tab:4}
    \begin{tabular}{rrrrr}
      \toprule
      $a_0$&$a_1$&$a_2$&$a_3$&$a_4$\\
      \midrule
      $0$&$0$&$0$&$0$&$0$\\
      $0$&$1$&$2$&$3$&$3$\\
      $0$&$1$&$2$&$3$&$4$\\
      $0$&$1$&$3$&$3$&$3$\\
      $0$&$2$&$4$&$5$&$6$\\
      $0$&$2$&$4$&$6$&$6$\\
      $0$&$2$&$4$&$6$&$7$\\
      $0$&$2$&$4$&$6$&$8$\\
      $0$&$3$&$5$&$6$&$7$\\
      $0$&$3$&$5$&$7$&$7$\\
      $0$&$3$&$5$&$7$&$8$\\
      $0$&$3$&$5$&$7$&$9$\\
      \bottomrule
    \end{tabular}
  \end{minipage}
\end{table}

However, it seems that
there is not necessarily a corresponding $p$
for a candidate $(a_n)_n$ output by the above algorithm.
Although all four combinations of signs are checked
in (\labelcref{it:four}) of \cref{algo:abst},
it seems there are combinations of signs
which do not actually occur.
For example,
the algorithm suggests that
$a_1=2$ may hold under the conditions
$p\equiv1\pmod{16}$, $N_{k'/\Q}(\varepsilon')=-1$ and $\eta_1\eta_2^{1+\gamma}>0$,
but in fact it never holds for $p<10^6$.
We propose the following conjecture.
\begin{conjecture}
  Assume $p\equiv1\pmod{16}$ and $N_{k'/\Q}(\varepsilon')=-1$.
  Then, $h_{k'}$ is divisible by $16$ if and only if
  $\eta_1\eta_2^{1+\gamma}>0$.
  Consequently,
  if $(-1)^{(p-1)/16}\eta_2^{1+\gamma}<0$,
  then $a_1=a_2$ from \cref{thm:a1a2_n}.
\end{conjecture}

Note that
by calculating the argument of $\eta_1\eta_2^{1+\gamma}$,
one can show that
$\eta_1\eta_2^{1+\gamma}>0$ if and only if
the cardinality of
\[
  \Set{x\in\Z|\frac{3}{16}p<x<\frac{9}{16}p,\,\jac{x}{p}=1}
\]
is even.

\subsection*{Acknowledgments}
A prototype for abstract computation method via
\cref{def:iter} was devised by Prof.~Keiichi Komatsu.
I would like to thank Prof.~Komatsu and
the participants of the seminar he organizes.

\bibliographystyle{plain}
\bibliography{ref}

\begin{thebibliography}{10}

\bibitem{ferrerowashington_ann_1979}
Bruce Ferrero and Lawrence~C. Washington.
\newblock The {Iwasawa} invariant {{\(\mu_p\)}} vanishes for abelian number fields.
\newblock {\em Ann. Math. (2)}, 109:377--395, 1979.

\bibitem{fukuda_japan_1994}
Takashi Fukuda.
\newblock Remarks on {{\(\mathbb{Z}_ p\)}}-extensions of number fields.
\newblock {\em Proc. Japan Acad., Ser. A}, 70(8):264--266, 1994.

\bibitem{fukudakomatsu_funct_2014}
Takashi Fukuda and Keiichi Komatsu.
\newblock {On the Iwasawa $\lambda$-invariant of the cyclotomic $\mathbb{Z}_2$-extension of $\mathbb{Q}(\sqrt{p})$ II}.
\newblock {\em Functiones et Approximatio Commentarii Mathematici}, 51(1):167 -- 179, 2014.

\bibitem{fukudakomatsukumakawasasaki_jxm_2026}
Takashi Fukuda, Keiichi Komatsu, Naoki Kumakawa, and Sosuke Sasaki.
\newblock On the iwasawa $\lambda$-invariant of the cyclotomic $\mathbb{Z}_2$-extension of $\mathbb{Q}(\sqrt{m}\,)$.
\newblock {\em Journal of Experimental Mathematics}, to appear.

\bibitem{fukudakomatsuozakitsuji_funct_2016}
Takashi Fukuda, Keiichi Komatsu, Manabu Ozaki, and Takae Tsuji.
\newblock {On the Iwasawa $\lambda$-invariant of the cyclotomic $\mathbb{Z}_2$-extension of $\mathbb{Q}(\sqrt{p})$, III}.
\newblock {\em Functiones et Approximatio Commentarii Mathematici}, 54(1):7 -- 17, 2016.

\bibitem{greenberg_ajm_1976}
Ralph Greenberg.
\newblock On the {Iwasawa} invariants of totally real number fields.
\newblock {\em Am. J. Math.}, 98:263--284, 1976.

\bibitem{hecke_gtm_1981}
Erich Hecke.
\newblock {\em Lectures on the Theory of Algebraic Numbers. {Transl}. from the {German} by {George} {U}. {Brauer} and {Jay} {R}. {Goldman} with the assistance of {R}. {Kotzen}}, volume~77 of {\em Grad. Texts Math.}
\newblock Springer, Cham, 1981.

\bibitem{iwasawa_abh_1956}
Kenkichi Iwasawa.
\newblock A note on class numbers of algebraic number fields.
\newblock {\em Abh. Math. Semin. Univ. Hamb.}, 20:257--258, 1956.

\bibitem{iwasawa_bull_1959}
Kenkichi Iwasawa.
\newblock On {{\(\varGamma\)}}-extensions of algebraic number fields.
\newblock {\em Bull. Am. Math. Soc.}, 65:183--226, 1959.

\bibitem{iwasawa_ann_1973}
Kenkichi Iwasawa.
\newblock On \(\mathbb{Z}_{\ell}\)-extensions of algebraic number fields.
\newblock {\em Ann. Math. (2)}, 98:246--326, 1973.

\bibitem{ozakitaya_manu_1997}
Manabu Ozaki and Hisao Taya.
\newblock On the {Iwasawa} {{\(\lambda_2\)}}-invariants of certain families of real quadratic fields.
\newblock {\em Manuscr. Math.}, 94(4):437--444, 1997.

\bibitem{sinnott_invent_1980}
W.~Sinnott.
\newblock On the {Stickelberger} ideal and the circular units of an abelian field.
\newblock {\em Invent. Math.}, 62:181--234, 1980.

\bibitem{washington_gtm_1997}
Lawrence~C. Washington.
\newblock {\em Introduction to Cyclotomic Fields.}, volume~83 of {\em Grad. Texts Math.}
\newblock New York, NY: Springer, 2nd ed. edition, 1997.

\end{thebibliography}

\end{document}